\DeclareMathAlphabet{\mathpzc}{OT1}{pzc}{m}{it}
\newtheorem{theorem}{Theorem}[section]
\newtheorem{lemma}[theorem]{Lemma}
\newtheorem{corollary}[theorem]{Corollary}
\newtheorem{claim}[theorem]{Claim}
\theoremstyle{definition}
\newtheorem{definition}[theorem]{Definition}
\newtheorem{assumptions}[theorem]{Assumptions}
\theoremstyle{remark}
\newtheorem{remark}[theorem]{Remark}
\numberwithin{equation}{section}
\begin{document}

\title{On the Milnor fibration of certain\\ Newton degenerate functions} 

\author{Christophe Eyral and Mutsuo Oka}
\address{C. Eyral, Institute of Mathematics, Polish Academy of Sciences, \'Sniadeckich~8, 00-656 Warsaw, Poland}
\email{cheyral@impan.pl}
\address{M. Oka, Department of Mathematics, Tokyo University of Science, 1-3 Kagurazaka, Shinjuku-ku, Tokyo 162-8601, Japan}   
\email{oka@rs.tus.ac.jp}

\subjclass[2020]{14B05, 14M10, 14M25, 32S05, 32S55.}

\keywords{Non-isolated hypersurface singularities; Non-degenerate complete intersection varieties; Milnor fibration; Stable radius for the Milnor fibration; Uniformly stable family and uniform stable radius.}

\begin{abstract}
It is well known that the diffeomorphism-type of the Milnor fibration of a (Newton) non-degenerate polynomial function $f$ is uniquely determined by the Newton boundary of $f$. In the present paper, we generalize this result to certain \emph{degenerate} functions, namely  we show that the diffeomorphism-type of the Milnor fibration of a (possibly degenerate) polynomial function of the form $f=f^1\cdots f^{k_0}$ is uniquely determined by the Newton boundaries of $f^1,\ldots, f^{k_0}$ if $\{f^{k_1}=\cdots=f^{k_m}=0\}$ is a non-degenerate complete intersection variety for any $k_1,\ldots,k_m\in \{1,\ldots, k_0\}$.
\end{abstract}

\maketitle

\markboth{C. Eyral and M. Oka}{On the Milnor fibration of certain Newton degenerate functions}

\section{Introduction}\label{intro}

Let  $f(\mathbf{z})$ and $g(\mathbf{z})$ be two non-constant polynomial functions of $n$ complex variables $\mathbf{z}=(z_1,\ldots,z_n)$ such that $f(\mathbf{0})=g(\mathbf{0})=0$ ($f$ and $g$ may have a \emph{non-isolated} singularity at~$\mathbf{0}$). The goal of this paper is to find ``easy-to-check'' conditions on the functions $f$ and $g$ that guarantee that their Milnor fibrations at $\mathbf{0}$ are isomorphic (i.e., there is a fibre-preserving diffeomorphism from the total space of the Milnor fibration of $f$ onto that of $g$).
In \cite{O1}, the second named author proved that if $f$ and $g$ are (Newton) non-degenerate and have the same Newton boundary, then necessarily they have isomorphic Milnor fibrations (the special cases where, in addition, $f$ is weighted homogeneous or has an isolated singularity at $\mathbf{0}$ were first proved in \cite{O4} and \cite{O3} respectively). The crucial step in the proof of this result is a similar assertion, also proved in \cite{O1}, for $1$-parameter families of functions. It says that if $\tau_0>0$ and if $\{f_t\}_{\vert t\vert\leq \tau_0}$ is a family of non-degenerate polynomial functions with the same Newton boundary, then the Milnor fibrations of $f_t$ and $f_0$ at $\mathbf{0}$ are isomorphic for any $t$, $\vert t\vert\leq \tau_0$. This theorem, in turn, is a consequence of another important result, still proved in \cite{O1}, which asserts that any family $\{f_t\}_{\vert t\vert\leq \tau_0}$ satisfying the above conditions has a so-called ``uniform stable radius'' for the Milnor fibrations of its elements $f_t$. 

Though the scope of the above mentioned theorems is relatively wide, it does not include, for instance, the following quite common situation.  Suppose that $f(\mathbf{z})$ is the product of $k_0\geq 2$ polynomial functions $f^1(\mathbf{z}),\ldots,f^{k_0}(\mathbf{z})$ on $\mathbb{C}^n$ with $n\geq 3$ (so, in particular, we have $\dim_{\mathbf{0}}(V(f^k)\cap V(f^{k'}))\geq n-2\geq 1$, where, as usual, $V(f^k)$ and $V(f^{k'})$ denote the hypersurfaces defined by $f^k$ and $f^{k'}$ respectively; here, the upper index denotes an index, not a power). Then we claim that $f$ is never non-degenerate (and hence the results of \cite{O1} do not apply to this situation). If $f$ is ``convenient'' (i.e., if its Newton boundary intersects each coordinate axis), then our claim is an immediate consequence of a theorem of Kouchnirenko \cite{K} which asserts that a convenient non-degenerate function always has an isolated singularity at the origin. In the above situation, since for $k\not=k'$ the intersection $V(f^k)\cap V(f^{k'})$ is contained in the singular locus of $V(f)$, if the function $f$ is convenient then Kouchnirenko's theorem implies that it must be ``degenerate'' (i.e., not non-degenerate). In the case where $f$ is not a convenient function, our claim follows from a theorem of Bernstein \cite{Bernstein} and Proposition (2.3) in Chapter 4 of \cite{O2} which imply that for $k\not=k'$ the intersection $V(f^k_{\mathbf{w}})\cap V(f^{k'}_{\mathbf{w}})\cap \mathbb{C}^{*n}$ is non-empty whenever $\mathbf{w}\in\mathbb{N}^{*n}$ is such that $f^{k}_{\mathbf{w}}$ and $f^{k'}_{\mathbf{w}}$ are not monomials and the dimension of the Minkowski sum $\Delta(\mathbf{w};f^k)+\Delta(\mathbf{w};f^{k'})$ is $\geq 2$. Here, $\Delta(\mathbf{w};f^k)$ (respectively, $f^k_{\mathbf{w}}$) denotes the face of the Newton polyhedron of $f^k$ (respectively, the face function of $f^k$) with respect to $\mathbf{w}$; similarly for the function $f^{k'}$; see Section~\ref{sect-ndciv} for the definitions. Of course, this implies that the face function $f_{\mathbf{w}}$ of $f$ with respect to $\mathbf{w}$ has a critical point in $V(f_{\mathbf{w}})\cap \mathbb{C}^{*n}$, that is, $f$ is degenerate.

In the present paper, we generalize the results of \cite{O1} to a class of polynomial functions that includes the ``degenerate'' examples mentioned above. A first class of such functions was already given by the authors in \cite{EO} in the case of $1$-parameter families of functions of the form $f_t(\mathbf{z})=f_t^1(\mathbf{z})\cdots f_t^{k_0}(\mathbf{z})$ under a condition called \emph{Newton-admissibility}. This condition says that the Newton boundaries of the functions $f^k_t$ which appear in the product must be independent of $t$ and the (germs at $\mathbf{0}$ of the) varieties $V(f^{k_1}_t,\ldots,f^{k_m}_t):=\{f^{k_1}_t=\cdots=f^{k_m}_t=0\}$ must be non-degenerate, uniformly locally tame, complete intersection varieties for any $k_1,\ldots,k_m\in\{1,\ldots,k_0\}$. The uniform local tameness is a non-degeneracy type condition with respect to the variables corresponding to the ``compact directions'' of the non-compact faces of the Newton polyhedron, the variables corresponding to the ``non-compact directions'' being fixed in a small ball independent of $t$ (for a precise definition, see \cite{EO}).

In fact, under the Newton-admissibility condition, we proved in \cite{EO} a much stronger result on the local geometry of the family of hypersurfaces $V(f_t)$: we showed that any Newton-admissible family is Whitney equisingular and satisfies Thom's condition. Then, as a consequence of these two results, we easily obtained that the Milnor fibrations of $f_t$ and $f_0$ at the origin are isomorphic for all small $t$. 
Note that in the case of non-isolated singularities, the Newton-admissibility condition is a crucial assumption when we want to study geometric properties like Whitney equisingularity or Thom's condition. However, if our goal is only to investigate the Milnor fibrations of the family members $f_t$, then, as we are going to show it in the present work, the uniform local tameness condition (which appears through the Newton-admissibility condition) can be completely dropped.

Our first main theorem here says that if the Newton boundaries of the functions $f^k_t$ $(1\leq k\leq k_0)$ are independent of $t$ and if the varieties $V(f^{k_1}_t,\ldots,f^{k_m}_t)$ are non-degenerate complete intersection varieties for any $k_1,\ldots,k_m\in\{1,\ldots,k_0\}$, then the Milnor fibrations of $f_t$ and $f_0$ at~$\mathbf{0}$ are isomorphic for all small $t$ (see Theorem \ref{fmtotp}). The main step to prove this theorem is the following assertion which is interesting itself. It says that, under the same assumptions, the family $\{f_t\}$ has a uniform stable radius (see Theorem \ref{smt} and Corollary~\ref{cor1}). In the course of the proof of this assertion, we also show how a stable radius for the Milnor fibration of a function of the form $f(\mathbf{z})=f^1(\mathbf{z})\cdots f^{k_0}(\mathbf{z})$ can be obtained when the corresponding varieties $V(f^{k_1},\ldots,f^{k_m})$ are non-degenerate complete intersection varieties for any $k_1,\ldots,k_m\in\{1,\ldots,k_0\}$ (see Theorem \ref{fmt}).

Our second main theorem, which is deduced from the first one, asserts that given two polynomial functions $f(\mathbf{z})=f^1(\mathbf{z})\cdots f^{k_0}(\mathbf{z})$ and $g(\mathbf{z})=g^1(\mathbf{z})\cdots g^{k_0}(\mathbf{z})$, if $V(f^{k_1},\ldots,f^{k_m})$ and $V(g^{k_1},\ldots,g^{k_m})$ are non-degenerate complete intersection varieties for any $k_1,\ldots,k_m\in\{1,\ldots,k_0\}$, and if for each $1\leq k\leq k_0$, the Newton boundaries of $f^k$ and $g^k$ coincide, then the Milnor fibrations of $f$ and $g$ at $\mathbf{0}$ are isomorphic (see Theorem \ref{mtfg}).

Note that in the special case where $k_0=1$ (for which the functions under consideration are necessarily non-degenerate), we recover all the results of \cite{O1}\textemdash a paper from which the present work is inspired.

\section{Non-degenerate complete intersection varieties}\label{sect-ndciv}

Let $\mathbf{z}:=(z_1,\ldots, z_n)$ be coordinates for $\mathbb{C}^n$, and let $f(\mathbf{z})=\sum_\alpha c_\alpha\, \mathbf{z}^\alpha$ be a non-constant polynomial function which vanishes at the origin. Here, $\alpha:=(\alpha_1,\ldots,\alpha_n)\in\mathbb{N}^n$, $c_\alpha\in\mathbb{C}$, and $\mathbf{z}^{\alpha}$ is a notation for the monomial $z_1^{\alpha_1}\cdots z_n^{\alpha_n}$. 
For any $I\subseteq\{1,\ldots, n\}$, we denote by $\mathbb{C}^I$ (respectively, $\mathbb{C}^{*I}$) the set of points $(z_1,\ldots, z_n)\in \mathbb{C}^n$ such that $z_i=0$ if $i\notin I$ (respectively, $z_i=0$ if and only if $i\notin I$).
In particular, we have $\mathbb{C}^{\emptyset}=\mathbb{C}^{*\emptyset}=\{\mathbf{0}\}$ and $\mathbb{C}^{*\{1,\ldots,n\}}=\mathbb{C}^{*n}$, where $\mathbb{C}^*:=\mathbb{C}\setminus \{\mathbf{0}\}$. 
Throughout this paper, we are only interested in a \emph{local} situation, that is, in (arbitrarily small representatives of) germs at the origin.

To start with, let us recall the definition of a non-degenerate complete intersection variety, which is a key notion in this paper (a standard reference for this is \cite{O2}). 

The \emph{Newton polyhedron} $\Gamma_{\! +}(f)$ of the germ of $f$ at the origin $\mathbf{0}\in \mathbb{C}^n$ (with respect to the coordinates $\mathbf{z}=(z_1,\ldots, z_n)$) is the convex hull in $\mathbb{R}_+^n$ of the set
\begin{equation*}
\bigcup_{c_\alpha\not=0} (\alpha+\mathbb{R}_+^n).
\end{equation*}
The \emph{Newton boundary} of $f$ (denoted by $\Gamma(f)$) is the union of the compact faces of  $\Gamma_{\! +}(f)$. 
For any weight vector $\mathbf{w}:=(w_1,\ldots,w_n)\in\mathbb{N}^n$, let $d(\mathbf{w};f)$ be the minimal value of the restriction to $\Gamma_{+}(f)$ of the linear map
\begin{equation*}
\mathbf{x}=(x_1,\ldots,x_n)\in \mathbb{R}^n\mapsto\sum_{i=1}^n x_i w_i \in \mathbb{R},
\end{equation*}
and let $\Delta(\mathbf{w};f)$ be the (possibly non-compact) face of $\Gamma_{+}(f)$ defined by 
\begin{equation*}
\Delta(\mathbf{w};f)=\bigg\{\mathbf{x}\in \Gamma_{+}(f)\, ; \, \sum_{i=1}^n x_i w_i = 
d(\mathbf{w};f)\bigg\}.
\end{equation*} 
Note that if all the $w_i$'s are positive, then $\Delta(\mathbf{w};f)$ is a (compact) face of $\Gamma(f)$, while if $\mathbf{w}=\mathbf{0}$, then $\Delta(\mathbf{w};f)=\Gamma_{+}(f)$. 
The \emph{face function} of $f$ with respect to $\mathbf{w}$ is the function
\begin{equation*}
\mathbf{z}\in\mathbb{C}^n \mapsto \sum_{\alpha\in \Delta(\mathbf{w};f)} c_\alpha\, \mathbf{z}^\alpha\in\mathbb{C}.
\end{equation*} 
Hereafter, this function will be denoted by $f_\mathbf{w}$ or $f_{\Delta(\mathbf{w};f)}$.

Now, consider the set $\mathcal{I}(f)$ consisting of all subsets $I\subseteq \{1,\ldots,n\}$ such that the restriction of $f$ to $\mathbb{C}^I$ (denoted by $f^I$) does not identically vanishes. Clearly, $I\in \mathcal{I}(f)$ if and only if $\Gamma(f^I)=\Gamma(f)\cap \mathbb{R}^I$ is not empty, where $\mathbb{R}^I$ is defined in a similar way as $\mathbb{C}^I$. Hereafter, for any weight vector $\mathbf{w}\in\mathbb{N}^I$, we shall use the simplified following notation:
\begin{equation*}
f^I_{\mathbf{w}}:=(f^I)_{\mathbf{w}}
\quad\mbox{and}\quad
f^I_{\Delta(\mathbf{w};f^I)}:=(f^I)_{\Delta(\mathbf{w};f^I)}.
\end{equation*}
(Of course, $\mathbb{N}^I$ is defined in a similar way as $\mathbb{C}^I$ and $\mathbb{R}^I$.)
Note that for all $\mathbf{w}\in\mathbb{N}^I$, we have
\begin{equation*}
f^I_{\mathbf{w}}\equiv f^I_{\Delta(\mathbf{w};f^I)} = f_{\Delta(\mathbf{w};f^I)}.
\end{equation*}

\begin{definition}[see \cite{K}]
The germ at $\mathbf{0}$ of the hypersurface $V(f):=f^{-1}(0)\subseteq \mathbb{C}^n$ is called \emph{non-degenerate} if for any ``positive'' weight vector $\mathbf{w}\in\mathbb{N}^{*n}$ (i.e., $w_i>0$ for all $i$), the hypersurface 
\begin{equation*}
V^*(f_{\mathbf{w}}):=\{\mathbf{z}\in\mathbb{C}^{*n}\mid 
f_{\mathbf{w}}(\mathbf{z})=0\}
\end{equation*}
is a reduced, non-singular, hypersurface in the complex torus $\mathbb{C}^{*n}$. This means that $f_{\mathbf{w}}$ has no critical point in $V^*(f_\mathbf{w})$, that is, the $1$-form $df_{\mathbf{w}}$ is nowhere vanishing in $V^*(f_\mathbf{w})$.
We emphasize that $V^*(f_{\mathbf{w}})$ is globally defined in~$\mathbb{C}^{*n}$.
\end{definition}

Now, consider $k_0$ non-constant polynomial functions $f^1(\mathbf{z}),\ldots, f^{k_0}(\mathbf{z})$ which all vanish at the origin. 

\begin{definition}[see \cite{O2}]
We say that the germ at $\mathbf{0}$ of the variety 
\begin{equation*}
V(f^1,\ldots,f^{k_0}):=\{\mathbf{z}\in\mathbb{C}^n\mid f^1(\mathbf{z}) = 
\cdots = f^{k_0}(\mathbf{z})=0\}
\end{equation*}
is a germ of a \emph{non-degenerate complete intersection variety} if for any positive weight vector $\mathbf{w}\in\mathbb{N}^{*n}$, the variety
\begin{equation*}
V^*(f^1_{\mathbf{w}},\ldots,f^{k_0}_{\mathbf{w}}):=\{\mathbf{z}\in\mathbb{C}^{*n}\mid f^1_{\mathbf{w}}(\mathbf{z}) = \cdots = f^{k_0}_{\mathbf{w}}(\mathbf{z})=0\}
\end{equation*}
is a reduced, non-singular, complete intersection variety in $\mathbb{C}^{*n}$, that is, the $k_0$-form 
\begin{equation*}
df^1_{\mathbf{w}}\wedge \cdots\wedge df^{k_0}_{\mathbf{w}}
\end{equation*}
is nowhere vanishing in  $V^*(f^1_{\mathbf{w}},\ldots,f^{k_0}_{\mathbf{w}})$. Again, we emphasize that $V^*(f^1_{\mathbf{w}},\ldots,f^{k_0}_{\mathbf{w}})$ is globally defined in~$\mathbb{C}^{*n}$.
\end{definition}

\begin{remark}\label{rem-rI}
If $V(f^1,\ldots,f^{k_0})$ is a germ of a non-degenerate complete intersection variety, then, by \cite[Chapter III, Lemma (2.2)]{O2}, for any $I\in\mathcal{I}(f^1)\cap\cdots\cap \mathcal{I}(f^{k_0})$, the germ at~$\mathbf{0}$ of the variety 
\begin{equation*}
V^I(f^1,\ldots,f^{k_0}):=\{\mathbf{z}\in\mathbb{C}^I\mid f^{1,I}(\mathbf{z}) = 
\cdots = f^{k_0,I}(\mathbf{z})=0\}
\end{equation*}
is a germ of a non-degenerate complete intersection variety too. In other words, for any $\mathbf{w}\in\mathbb{N}^{*I}$, the $k_0$-form 
$
df^{1,I}_{\mathbf{w}}\wedge \cdots\wedge df^{k_0,I}_{\mathbf{w}}
$
is nowhere vanishing in 
\begin{equation*}
V^{*I}(f^1_{\mathbf{w}},\ldots,f^{k_0}_{\mathbf{w}}):=\{\mathbf{z}\in\mathbb{C}^{*I}\mid f^{1,I}_{\mathbf{w}}(\mathbf{z}) = \cdots = f^{k_0,I}_{\mathbf{w}}(\mathbf{z})=0\}.
\end{equation*}
(As usual, $f^{k,I}$ is the restriction of $f^k$ to $\mathbb{C}^I$ and $f^{k,I}_{\mathbf{w}}$ is the face function $(f^{k,I})_{\mathbf{w}}\equiv (f^{k,I})_{\Delta(\mathbf{w};f^{k,I})}$.)
\end{remark}

\section{Stable radius for the Milnor fibration}\label{sect-srfmf}

Let again $f^1(\mathbf{z}),\ldots, f^{k_0}(\mathbf{z})$ be non-constant polynomial functions of $n$ complex variables $\mathbf{z}=(z_1,\ldots,z_n)$ such that $f^k(\mathbf{0})=0$ for all $1\leq k\leq k_0$. 

\begin{assumptions}\label{ass-srfmf}
Throughout this section, we assume that for any $k_1,\ldots, k_m\in\{1,\ldots, k_0\}$, the germ of the variety $V(f^{k_1},\ldots,f^{k_m})$ at $\mathbf{0}$ is the germ of a non-degenerate complete intersection variety. 
\end{assumptions}

We start with the following lemma which is crucial for the paper. Note that in the special case where $k_0=1$, the function $f^1$ (or the hypersurface $V(f^1)$) is non-degenerate, and the lemma below coincides with Lemma 1 of \cite{O1}.

\begin{lemma}\label{Lemma1}
Under Assumptions \ref{ass-srfmf}, there exists $\varepsilon>0$ such that for any $k_1,\ldots, k_m\in\{1,\ldots, k_0\}$, any $I\subseteq\{1,\ldots,n\}\,$ with $I\in\mathcal{I}(f^{k_1})\cap\cdots\cap\mathcal{I}(f^{k_m})$, any weight vector $\mathbf{w}=(w_1,\ldots,w_n)\in\mathbb{N}^{I}$ and any (possibly zero) $\lambda\in \mathbb{C}$,  if $\mathbf{a}=(a_1,\ldots,a_n)$ is a point in $\mathbb{C}^{I}$ satisfying the following two conditions:
\begin{enumerate}
\item
$f_\mathbf{w}^{k_1,I}(\mathbf{a})=\cdots=f_\mathbf{w}^{k_m,I}(\mathbf{a})=0$;
\item
there exists a $m$-tuple $(\mu_{k_1},\ldots,\mu_{k_m})\in\mathbb{C}^m\setminus\{\mathbf{0}\}$ such that for all $i\in I$:
\begin{equation*}
\sum_{j=1}^m \mu_{k_j}\frac{\partial f_{\mathbf{w}}^{k_j,I}}{\partial z_{i}}(\mathbf{a})=
\left\{
\begin{aligned}
& \lambda\bar a_{i} &&\mbox{if} && i\in I\cap I(\mathbf{w}),\\
& 0 &&\mbox{if} && i\in I\setminus I(\mathbf{w}),
\end{aligned}
\right.
\end{equation*}
where $\bar a_i$ is the complex conjugate of $a_i$ and $I(\mathbf{w}):=\{i\in \{1,\ldots,n\};\, w_i=0\}$; 
\end{enumerate}
then we must have
\begin{equation*}
\mathbf{a}\notin\bigg\{\mathbf{z}\in \mathbb{C}^{*I}\, ;\, \sum_{i\in I\cap I(\mathbf{w})}|z_i|^2\leq \varepsilon^2\bigg\}.
\end{equation*}
\end{lemma}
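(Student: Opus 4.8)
The plan is to argue by contradiction, exploiting the compactness of a suitable parameter space combined with the non-degeneracy hypothesis. Suppose no such $\varepsilon$ exists. Then for every $\varepsilon=1/\nu$ ($\nu\in\mathbb{N}^*$) there is a choice of indices $k_1,\ldots,k_m$, a set $I\in\mathcal{I}(f^{k_1})\cap\cdots\cap\mathcal{I}(f^{k_m})$, a weight vector $\mathbf{w}^{(\nu)}\in\mathbb{N}^I$, a scalar $\lambda^{(\nu)}\in\mathbb{C}$, a nonzero vector $(\mu_{k_1}^{(\nu)},\ldots,\mu_{k_m}^{(\nu)})$, and a point $\mathbf{a}^{(\nu)}\in\mathbb{C}^{*I}$ with $\sum_{i\in I\cap I(\mathbf{w}^{(\nu)})}|a_i^{(\nu)}|^2\le 1/\nu^2$ satisfying conditions (1) and (2). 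Since the (finitely many) discrete data $k_1,\ldots,k_m$, $I$, and — crucially — the pair of faces $(\Delta(\mathbf{w}^{(\nu)};f^{k_1,I}),\ldots,\Delta(\mathbf{w}^{(\nu)};f^{k_m,I}))$ can only take finitely many values as $\mathbf{w}$ ranges over $\mathbb{N}^I$, we may pass to a subsequence along which all of these are constant, say equal to $k_1,\ldots,k_m$, $I$, and faces $\Xi_1,\ldots,\Xi_m$. Fix a single weight vector $\mathbf{w}\in\mathbb{N}^I$ realizing these faces simultaneously (such a $\mathbf{w}$ exists since the faces come from a common subdivision); then $f^{k_j,I}_{\mathbf{w}^{(\nu)}}=f^{k_j,I}_{\mathbf{w}}$ for all $j$ and all $\nu$ in the subsequence, and moreover $I\cap I(\mathbf{w}^{(\nu)})$ stabilizes to $I\cap I(\mathbf{w})=:I_0$.

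Next I would normalize. Replacing $(\mu_{k_1}^{(\nu)},\ldots,\mu_{k_m}^{(\nu)})$ by its quotient by its norm, I may assume it is a unit vector; after a further subsequence it converges to a unit vector $(\mu_{k_1},\ldots,\mu_{k_m})\ne\mathbf{0}$. The coordinates $a_i^{(\nu)}$ for $i\in I_0$ tend to $0$ by hypothesis, while the coordinates $a_i^{(\nu)}$ for $i\in I\setminus I_0$ live in $\mathbb{C}^*$. Here one must be careful: these latter coordinates need not stay bounded, nor bounded away from $0$. To handle this, I would split $I\setminus I_0$ according to the limiting behaviour: after passing to a subsequence, write $I\setminus I_0 = I_{\mathrm{fin}}\sqcup I_{\mathrm{zero}}\sqcup I_{\mathrm{inf}}$, where $a_i^{(\nu)}$ converges to a nonzero finite limit on $I_{\mathrm{fin}}$, to $0$ on $I_{\mathrm{zero}}$, and to $\infty$ on $I_{\mathrm{inf}}$. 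The monomials appearing in the face functions $f^{k_j,I}_{\mathbf{w}}$ all lie on the face $\Xi_j$, hence satisfy one linear relation coming from $\mathbf{w}$; one can then use a secondary weight vector to separate the limiting behaviour. Concretely, set $I':=I_{\mathrm{fin}}$ (the truly surviving torus directions) and, after rescaling each $f^{k_j,I}_{\mathbf{w}}$ by an appropriate power of the parameter controlling the blow-up/blow-down of the $I_{\mathrm{zero}}$ and $I_{\mathrm{inf}}$ coordinates, pass to the limit to obtain face functions $g^{k_j}$ of $f^{k_j,I'}$ (with respect to some positive weight on $I'$) together with a point $\mathbf{a}\in\mathbb{C}^{*I'}$ and $\lambda\in\mathbb{C}$ (a subsequential limit of $\lambda^{(\nu)}$, which one checks stays bounded because the left-hand side of (2) is bounded along the subsequence once the $\mu$'s are normalized and the relevant coordinates controlled) such that $\sum_j\mu_{k_j}\, dg^{k_j}(\mathbf{a})$ vanishes identically — the $\lambda\bar a_i$ term disappears since $\lambda\bar a_i^{(\nu)}\to 0$ for $i\in I_0$, while for $i\in I'$ the corresponding partials are not constrained by a $\lambda$-term at all, and the limit of the $i\in I_0$ equations forces either $\lambda=0$ or all $a_i^{(\nu)}\to 0$. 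This contradicts Remark \ref{rem-rI}, which guarantees that $V^{*I'}(g^{k_1},\ldots,g^{k_m})$ is a non-degenerate complete intersection, i.e. $dg^{k_1}\wedge\cdots\wedge dg^{k_m}$ never vanishes on it, so in particular no nonzero linear combination of the $dg^{k_j}$ can vanish at a common zero in $\mathbb{C}^{*I'}$.

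\textbf{Main obstacle.} The delicate point is the limiting procedure when the surviving torus coordinates $a_i^{(\nu)}$ degenerate — some going to $0$, some to $\infty$. Controlling these requires choosing a secondary weight vector adapted to the subsequence and performing a toric rescaling of the face functions so that the limit exists and is again a face function (of a smaller-support restriction) of the original $f^{k_j}$; one then invokes Remark \ref{rem-rI} at the level of the support set $I'$. Keeping track of which monomials survive the rescaling, and verifying that the limiting vector $(\mu_{k_1},\ldots,\mu_{k_m})$ remains nonzero and that $\lambda^{(\nu)}$ does not run off to infinity, is where the real work lies; the rest is a routine compactness-and-contradiction packaging modeled on the proof of Lemma 1 in \cite{O1}.
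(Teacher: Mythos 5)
Your overall strategy — negate the statement, extract a degenerating sequence, rescale to pass to a limiting face function, and invoke Remark~\ref{rem-rI} — is in the same family as the paper's argument (which also rescales and invokes the non-degeneracy), but your proposal has a genuine gap at the crucial point, and it is not merely the admitted technical work of the toric rescaling.

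The gap is that the contradiction you aim for is linear dependence of the limiting differentials $dg^{k_j}$ at the limit point, and for that you need the $\lambda$-term on the right-hand side of condition~(2) to vanish in the limit; you claim $\lambda^{(\nu)}$ stays bounded ``because the left-hand side of (2) is bounded once the $\mu$'s are normalized,'' but this is false in general. With $\mu^{(\nu)}$ on the unit sphere and $a_i^{(\nu)}\to 0$ for $i\in I_0$, the relation $\sum_j\mu_{k_j}^{(\nu)}\partial_i f_{\mathbf{w}}^{k_j,I}(\mathbf{a}^{(\nu)})=\lambda^{(\nu)}\bar a_i^{(\nu)}$ typically forces $\lambda^{(\nu)}\to\infty$ whenever the left-hand side stays bounded away from zero — and there is no a priori reason for the left-hand side to tend to zero. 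In fact, the paper's Claim~\ref{claim20210426} proves precisely that, after passing to the curve and taking leading coefficients, the quantity $\sum_j\tilde\mu_{k_j}\partial_i\big(f_{\mathbf{w}}^{k_j}\big)_{\mathbf{v}}(\mathbf{b})$ is \emph{not} zero for some $i_0\in I(\mathbf{w})$. The linear-dependence argument you propose is therefore only the \emph{sub-claim} showing that $\lambda$ cannot be identically zero; it does not close the proof. The case $\lambda\not\equiv 0$ requires a second, different idea: in the paper one obtains $S_i=\lambda_0\bar b_i$ on $I_0$ and $S_i=0$ off $I_0$, and then pairs these relations with the Euler identity for the weighted-homogeneous face functions, producing $0=\lambda_0\sum_{i\in I_0}v_i|b_i|^2\neq 0$. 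Your proposal never confronts this case, and the hand-waving ``the limit of the $i\in I_0$ equations forces either $\lambda=0$ or all $a_i^{(\nu)}\to0$'' does not resolve it (the $a_i^{(\nu)}$ on $I_0$ already go to $0$ by hypothesis, so the second alternative carries no information).

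A secondary, but still substantive, issue is the passage to the limit itself. The paper side-steps your three-way split $I_{\mathrm{fin}}\sqcup I_{\mathrm{zero}}\sqcup I_{\mathrm{inf}}$ altogether: it first performs the one-parameter toric rescaling $(\zeta*\mathbf{z})_i=\zeta^{w_i}z_i$, which forces \emph{every} coordinate of the modified sequence to converge to $0$ (the $I(\mathbf{w})$-coordinates go to zero by hypothesis, and $\zeta^{w_i}\to 0$ for $i\notin I(\mathbf{w})$ kills the rest regardless of whether they blow up or not). This makes the Curve Selection Lemma applicable and gives honest Taylor and Laurent expansions, from which the integer $\delta$ and the minimal exponent $v_0$ can be read off exactly. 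In a purely sequential compactness argument the corresponding bookkeeping — which monomials survive, how $\lambda^{(\nu)}$ scales relative to the $\mu^{(\nu)}_{k_j}$ and to the blow-up rates of the $I_{\mathrm{inf}}$-coordinates — is precisely where you would have to recreate the order-matching that the curve gives for free, and your proposal does not carry it out. So even granting the rescaling, the Euler-identity step would still be missing.
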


We shall prove Lemma \ref{Lemma1} at the end of this section. First, let us use it in order to prove the following first important theorem.

\begin{theorem}\label{fmt}
Under Assumptions \ref{ass-srfmf}, if $f(\mathbf{z}):=f^1(\mathbf{z})\cdots f^{k_0}(\mathbf{z})$, then the number $\varepsilon$ which appears in Lemma \ref{Lemma1} is a stable radius for the Milnor fibration of $f$.
\end{theorem}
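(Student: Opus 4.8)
The plan is to show that $\varepsilon$ serves as a stable radius by verifying the standard criterion: for every $\mathbf{z}$ with $0<\|\mathbf{z}\|\leq\varepsilon$, the real-valued map $\varphi_f(\mathbf{z}):=\log|f(\mathbf{z})|$ restricted to the sphere $S_r=\{\|\mathbf{z}\|=r\}$ with $r=\|\mathbf{z}\|$ has no critical point at $\mathbf{z}$ lying on $V(f)$, and more precisely that the gradient $\overline{\operatorname{grad}\log f(\mathbf{z})}$ is never a nonnegative-real multiple of $\mathbf{z}$ (this is Milnor's condition a la L\^e--Oka ensuring local triviality of the Milnor fibration $f/|f|$ and of $f^{-1}(D_\delta^*)\cap B_r$ for all $0<r\leq\varepsilon$). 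First I would fix such a $\mathbf{z}$, and let $I\subseteq\{1,\ldots,n\}$ be the subset with $\mathbf{z}\in\mathbb{C}^{*I}$. Write $K\subseteq\{1,\ldots,n\}$, $K=\{k\mid f^k(\mathbf{z})=0\}$ for the indices of the factors vanishing at $\mathbf{z}$. Since $f=f^1\cdots f^{k_0}$, on a neighborhood of $\mathbf{z}$ we have $\log f=\sum_k \log f^k$, and only the factors with $k\in K$ contribute singular behavior; the point $\mathbf{z}$ lies on $V(f^{k})$ precisely for $k\in K$, so $\mathbf{z}\in V(f^{k_1},\ldots,f^{k_m})$ where $\{k_1,\ldots,k_m\}=K$.

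The heart of the argument is to reduce, by a curve-selection / Curve Selection Lemma argument applied to the (semi-analytic) set of putative bad points, to the face-function situation handled by Lemma \ref{Lemma1}. Concretely, suppose for contradiction that there is a sequence (or real analytic curve) of bad points $\mathbf{z}(s)\to\mathbf{0}$ inside $\overline{B_\varepsilon}$, i.e. points where $\overline{\operatorname{grad}\log f(\mathbf{z}(s))}=\lambda(s)\,\overline{\mathbf{z}(s)}$ for some $\lambda(s)\in\mathbb{R}_{\geq0}$ (or more generally where the relevant Lagrange-type equations for criticality of $|f|$ on the sphere hold), with $\mathbf{z}(s)$ on $V(f)$. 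Passing to a subcurve we may assume $\mathbf{z}(s)\in\mathbb{C}^{*I}$ for a fixed $I$ and $K$ fixed, and that the Puiseux expansion $\mathbf{z}(s)=(a_i s^{w_i}+\cdots)_{i\in I}$ has a well-defined leading term governed by a weight vector $\mathbf{w}=(w_i)_{i\in I}\in\mathbb{N}^{I}$ (after clearing denominators; the $w_i$ are the vanishing orders, with $w_i=0$ exactly when the coordinate does not tend to $0$, i.e. $i\in I\cap I(\mathbf{w})$). Taking leading terms in the equations $f^k(\mathbf{z}(s))=0$ for $k\in K$ gives $f^{k,I}_{\mathbf{w}}(\mathbf{a})=0$ (condition (1) of Lemma \ref{Lemma1}), where $\mathbf{a}=(a_i)_{i\in I}\in\mathbb{C}^{*I}$. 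Taking leading terms in the criticality equations — using $\operatorname{grad}\log f=\sum_{k\in K}\operatorname{grad}\log f^k = \sum_{k\in K}(1/f^k)\operatorname{grad} f^k$ and tracking the lowest-order behavior of each $1/f^k$, which introduces the coefficients $\mu_k$ as ratios of these orders (this is exactly the standard device from \cite{O1} for products) — yields a nonzero tuple $(\mu_{k})_{k\in K}$ and a $\lambda\in\mathbb{C}$ satisfying condition (2) of Lemma \ref{Lemma1} with $I,K,\mathbf{w},\mathbf{a}$ as above. But then Lemma \ref{Lemma1} forces $\mathbf{a}\notin\{\mathbf{z}\in\mathbb{C}^{*I}\mid\sum_{i\in I\cap I(\mathbf{w})}|z_i|^2\leq\varepsilon^2\}$, i.e. $\sum_{i\in I\cap I(\mathbf{w})}|a_i|^2>\varepsilon^2$; since those coordinates are the ones with $w_i=0$, they equal $\lim_{s\to0} z_i(s)$ up to the other coordinates tending to $0$, so $\|\mathbf{z}(s)\|^2\to\sum_{i\in I\cap I(\mathbf{w})}|a_i|^2>\varepsilon^2$, contradicting $\mathbf{z}(s)\in\overline{B_\varepsilon}$ for $s$ small. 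This contradiction establishes that $\varepsilon$ is a stable radius.

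I expect the main obstacle to be the careful bookkeeping in the passage to leading terms of the criticality equations for the product: one must justify (i) that a single weight vector $\mathbf{w}$ simultaneously controls the leading terms of all the relevant series, which requires a standard but slightly delicate selection of the curve so that all the orders are well-defined and compatible; (ii) that dividing by the $f^k$'s and collecting lowest-order terms produces a genuinely nonzero coefficient tuple $(\mu_k)_{k\in K}$ — this is where one uses that the orders $\operatorname{ord}_s f^k(\mathbf{z}(s))$ are finite and the normalization is chosen so the minimal one contributes; and (iii) the matching between ``coordinates with $w_i=0$'' and ``coordinates bounded away from $0$ along the curve,'' so that the conclusion of Lemma \ref{Lemma1} translates into a genuine lower bound on $\|\mathbf{z}(s)\|$. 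Apart from this, the use of the Curve Selection Lemma to replace ``a sequence of bad points accumulating at $\mathbf{0}$'' by ``a real analytic curve'' is routine, as is the reduction to a fixed stratum $\mathbb{C}^{*I}$ and fixed $K$. The case $k_0=1$ recovers Theorem/Lemma~1 of \cite{O1} verbatim, which is a useful consistency check.
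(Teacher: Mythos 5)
Your skeleton — Curve Selection Lemma, Puiseux expansion, take leading terms to reach the hypotheses of Lemma~\ref{Lemma1}, then contradict its conclusion — is the right one and matches the paper, but the setup you describe is internally inconsistent and would not compile into a proof. First, the Curve Selection Lemma is applied (after observing, via \cite[Corollary 2.8]{Milnor}, that the nearby fibres $f^{-1}(\eta)$, $\eta\not=0$, are non-singular in $\mathring B_\varepsilon$) to a putative curve of tangency points of \emph{non-zero} fibres with spheres. So the correct conditions are: $\partial f/\partial z_i(\mathbf{z}(s))=\lambda(s)\bar z_i(s)$, $f(\mathbf{z}(0))=0$ but $f(\mathbf{z}(s))\not\equiv 0$, and crucially $\varepsilon'\leq\|\mathbf{z}(s)\|\leq\varepsilon$ for some $\varepsilon'>0$. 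In particular $\mathbf{z}(s)$ does \emph{not} tend to $\mathbf{0}$ — it lives in an annulus — which is exactly what forces $I(\mathbf{w})\not=\emptyset$ and makes the conclusion of Lemma~\ref{Lemma1} bite. You write $\mathbf{z}(s)\to\mathbf{0}$ yet also conclude $\|\mathbf{z}(s)\|^2\to\sum_{i\in I\cap I(\mathbf{w})}|a_i|^2>\varepsilon^2$, which is a contradiction inside your own argument, not with Lemma~\ref{Lemma1}.

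Second, and more seriously, you assume $\mathbf{z}(s)\in V(f)$ and then propose to compute $\operatorname{grad}\log f=\sum_k(1/f^k)\operatorname{grad}f^k$; these two statements are incompatible (if some $f^k(\mathbf{z}(s))\equiv 0$, the right-hand side is undefined). In the correct setup $f^k(\mathbf{z}(s))\not\equiv 0$ for every $k$, and the vanishing $f^{k,I}_{\mathbf{w}}(\mathbf{a})=0$ for \emph{some} $k$ — condition (1) of Lemma~\ref{Lemma1} — is not obtained by ``taking leading terms in $f^k(\mathbf{z}(s))=0$.'' It is the content of the paper's Claim~\ref{claim1}, which is proved by a contradiction argument using the Euler identity for the weighted-homogeneous face functions. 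Similarly, the selection of the correct subset of indices $k$ and of the non-zero tuple $(\mu_k)$ is done in the paper by carefully comparing the orders $o_k=\operatorname{ord}f^k(\mathbf{z}(s))$ with $d(\mathbf{w};f^k)$, defining the quantities $e_k$ and keeping only those $k$ with $e_k=e_{\min}$; you flag this as an ``obstacle'' but do not resolve it, and it is precisely where the proof has genuine content. So the gap is real: your reduction to Lemma~\ref{Lemma1} does not go through as written because it silently requires the curve to lie on $V(f)$ and to converge to $\mathbf{0}$, both of which are false in the correct application of the Curve Selection Lemma here.
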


We recall that $\varepsilon$ is called a \emph{stable radius} for the Milnor fibration of $f$ if for any $0<\varepsilon_1\leq \varepsilon_2<\varepsilon$, there exists $\delta(\varepsilon_1,\varepsilon_2)>0$ such that for any $\eta\in\mathbb{C}$ with $0<|\eta|\leq \delta(\varepsilon_1,\varepsilon_2)$, the hypersurface $f^{-1}(\eta)\subseteq\mathbb{C}^n$ is non-singular in $\mathring{B}_{\varepsilon}:=\{\mathbf{z}\in\mathbb{C}^n\, ;\, \Vert \mathbf{z} \Vert<\varepsilon\}$ and transversely intersects the spheres $S_{\varepsilon_{12}}:=\{\mathbf{z}\in\mathbb{C}^n\, ;\, \Vert \mathbf{z} \Vert=\varepsilon_{12}\}$ for any $\varepsilon_1\leq \varepsilon_{12}\leq\varepsilon_2$. 
The existence of such a radius was proved by H. A. Hamm and L\^e D\~ung Tr\'ang in \cite[Lemme (2.1.4)]{HL}.  

Note that Theorem \ref{fmt} includes Theorem 1 of \cite{O1} which is obtained by taking $k_0=1$.

\begin{proof}[Proof of Theorem \ref{fmt}]
We argue by contradiction. By \cite[Corollary 2.8]{Milnor}, for $\delta>0$ small enough, the fibres $f^{-1}(\eta)\cap\mathring{B}_\varepsilon$ are non-singular for any $\eta$, $0<|\eta|\leq\delta$. It follows that if the assertion in Theorem \ref{fmt} is not true, then, by the Curve Selection Lemma (see \cite{Milnor,Hamm}), there exists a real analytic curve $\mathbf{z}(s)=(z_1(s),\ldots,z_n(s))$ in $\mathbb{C}^n$, $0\leq s\leq 1$, and a family of complex numbers $\lambda(s)$, $0<s\leq 1$, satisfying the following three conditions:
\begin{enumerate}
\item[(i)]
$\frac{\partial f}{\partial z_i}(\mathbf{z}(s)) = \lambda(s) \bar z_i(s)$ for $1\leq i\leq n$ and $s\not=0$;
\item[(ii)]
$f(\mathbf{z}(0))=0$ but $f(\mathbf{z}(s))$ is not constantly zero;
\item[(iii)]
there exists $\varepsilon'>0$ such that $\varepsilon'\leq \Vert \mathbf{z}(s)\Vert\leq \varepsilon$.
\end{enumerate}
Note that, by (i) and (ii), $\lambda(s)\not\equiv 0$ and we can express it in a Laurent series
\begin{equation*}
\lambda(s)=\lambda_0 s^c+\cdots,
\end{equation*}
where $\lambda_0\in\mathbb{C}^*$. Throughout, the dots ``$\cdots$'' stand for the higher order terms. Let $I:=\{i\, ;\, z_i(s)\not\equiv 0\}$. 
By (ii), $I\in\mathcal{I}(f)$, and hence $I\in\mathcal{I}(f^{1})\cap\cdots\cap\mathcal{I}(f^{k_0})$. For each $i\in I$, consider the Taylor expansion
\begin{equation*}
z_i(s)=a_i s^{w_i}+\cdots,
\end{equation*}
where $a_i\in\mathbb{C}^*$ and $w_i\in\mathbb{N}$. 

\begin{claim}\label{claim1}
There exists $1\leq k\leq k_0$ such that $f^{k,I}_{\mathbf{w}}(\mathbf{a})\equiv(f^{k,I})_{\mathbf{w}}(\mathbf{a})=0$, where $\mathbf{a}$ and $\mathbf{w}$ are the points in $\mathbb{C}^{*I}$ and $\mathbb{N}^{I}$, respectively, whose $i$th coordinates ($i\in I$) are $a_{i}$ and $w_{i}$ respectively.
\end{claim}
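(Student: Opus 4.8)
The plan is to argue by contradiction: assuming that $f^{k,I}_{\mathbf{w}}(\mathbf{a})\neq 0$ for \emph{every} $k\in\{1,\ldots,k_0\}$, I will deduce that $w_i\geq 1$ for all $i\in I$, hence that $\mathbf{z}(0)=\mathbf{0}$, which is impossible because of condition~(iii).

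First I would record two routine substitutions. Since $z_j(s)\equiv 0$ for $j\notin I$, restricting to $\mathbb{C}^I$ gives $f(\mathbf{z}(s))=f^I((z_i(s))_{i\in I})$ and $\frac{\partial f}{\partial z_i}(\mathbf{z}(s))=\frac{\partial f^I}{\partial z_i}((z_j(s))_{j\in I})$ for $i\in I$; and substituting $z_i(s)=a_i s^{w_i}+\cdots$ into any polynomial $h$ supported on $I$ produces $h((z_i(s))_{i\in I})=h_{\mathbf{w}}(\mathbf{a})\,s^{d(\mathbf{w};h)}+\cdots$, the dots being strictly higher order in $s$. Taking $h=f^I=f^{1,I}\cdots f^{k_0,I}$ and using the multiplicativity of Newton data under products (so $(f^I)_{\mathbf{w}}=\prod_k f^{k,I}_{\mathbf{w}}$ and $d(\mathbf{w};f^I)=\sum_k d(\mathbf{w};f^{k,I})=:D$), I obtain $f(\mathbf{z}(s))=\bigl(\prod_k f^{k,I}_{\mathbf{w}}(\mathbf{a})\bigr)s^{D}+\cdots$, whose leading coefficient $f^I_{\mathbf{w}}(\mathbf{a})=\prod_k f^{k,I}_{\mathbf{w}}(\mathbf{a})$ is nonzero by the contradiction hypothesis. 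Since $f(\mathbf{z}(0))=0$ by~(ii), this forces $D\geq 1$.

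Next, for each $i\in I$, I would evaluate the $s$-order of $\frac{\partial f}{\partial z_i}(\mathbf{z}(s))$ in two ways. Applying the substitution formula to $h=\frac{\partial f^I}{\partial z_i}$ shows that the coefficient of $s^{D-w_i}$ in $\frac{\partial f}{\partial z_i}(\mathbf{z}(s))$ equals $\frac{\partial f^I_{\mathbf{w}}}{\partial z_i}(\mathbf{a})$ and that no strictly smaller power of $s$ occurs; hence this order is $\geq D-w_i$, with equality exactly when $\frac{\partial f^I_{\mathbf{w}}}{\partial z_i}(\mathbf{a})\neq 0$. On the other hand, condition~(i) gives $\frac{\partial f}{\partial z_i}(\mathbf{z}(s))=\lambda(s)\bar z_i(s)=\lambda_0\bar a_i\,s^{c+w_i}+\cdots$, of order exactly $c+w_i$ (recall $\lambda_0\neq 0$ and $a_i\neq 0$). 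Comparing, $c+w_i\geq D-w_i$ for all $i\in I$, i.e. $w_i\geq m:=\tfrac12(D-c)$, and moreover $w_i=m$ whenever $\frac{\partial f^I_{\mathbf{w}}}{\partial z_i}(\mathbf{a})\neq 0$.

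The decisive step is Euler's identity for $f^I_{\mathbf{w}}$, which is quasi-homogeneous of degree $D$ with respect to the weights $(w_i)_{i\in I}$: $\sum_{i\in I}w_i a_i\frac{\partial f^I_{\mathbf{w}}}{\partial z_i}(\mathbf{a})=D\,f^I_{\mathbf{w}}(\mathbf{a})\neq 0$ (here $D\geq 1$ and $f^I_{\mathbf{w}}(\mathbf{a})\neq 0$). Therefore some index $i_0\in I$ satisfies $w_{i_0}>0$ and $\frac{\partial f^I_{\mathbf{w}}}{\partial z_{i_0}}(\mathbf{a})\neq 0$, whence $m=w_{i_0}\geq 1$, and so $w_i\geq m\geq 1$ for every $i\in I$. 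Consequently $z_i(0)=0$ for all $i$ (trivially so for $i\notin I$), i.e. $\mathbf{z}(0)=\mathbf{0}$, contradicting the lower bound $\Vert\mathbf{z}(0)\Vert\geq\varepsilon'>0$ from~(iii). I expect the only point needing care to be the order bookkeeping in the third paragraph\textemdash checking that $D-w_i$ really is the bottom exponent attainable and that its coefficient is exactly $\frac{\partial f^I_{\mathbf{w}}}{\partial z_i}(\mathbf{a})$\textemdash together with the multiplicativity of face functions used in the second paragraph; the rest of the argument is formal.
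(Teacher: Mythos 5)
Your proof is correct, and it packages the argument a bit differently from the paper. The paper argues by contradiction exactly as you do, but keeps the factors $f^k$ separate throughout: it first disposes of the case $I\cap I(\mathbf{w})=I$ by noting that then $d(\mathbf{w};f^{k_1})=0$ forces $f^{k_1}_{\mathbf{w}}(\mathbf{a})=f^{k_1}(\mathbf{z}(0))=0$, and in the remaining case it expands $\partial f/\partial z_i$ via the product rule with coefficients $\mu_k=\prod_{\ell\neq k}f^{\ell,I}_{\mathbf{w}}(\mathbf{a})$, compares $s$-orders to show that $\sum_k\mu_k\,\partial f^{k,I}_{\mathbf{w}}/\partial z_i(\mathbf{a})$ vanishes for $i\notin I(\mathbf{w})$, and then sums the Euler identities of the individual $f^{k,I}_{\mathbf{w}}$ to reach the numerical contradiction $\bigl(\prod_\ell f^{\ell,I}_{\mathbf{w}}(\mathbf{a})\bigr)\sum_k d(\mathbf{w};f^{k,I})\neq 0$. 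You instead fold the factors into the single face function $f^I_{\mathbf{w}}=\prod_k f^{k,I}_{\mathbf{w}}$, whose value at $\mathbf{a}$ is nonzero by hypothesis, run the order comparison and a single Euler identity for $f^I_{\mathbf{w}}$, and conclude $w_i\geq 1$ for every $i\in I$, so that $\mathbf{z}(0)=\mathbf{0}$ contradicts (iii). The two arguments are equivalent (the paper's sum $\sum_k\mu_k\,\partial f^{k,I}_{\mathbf{w}}/\partial z_i(\mathbf{a})$ is precisely $\partial f^I_{\mathbf{w}}/\partial z_i(\mathbf{a})$ under the contradiction hypothesis), and both ultimately invoke (iii) through the nonemptiness of $I\cap I(\mathbf{w})$, equivalently $\mathbf{z}(0)\neq\mathbf{0}$. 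Your version is shorter and avoids the case split, essentially reducing the claim to the $k_0=1$ situation; the paper's version is less slick here but uses exactly the same bookkeeping ($\mu_k$, $e_k$, order comparisons) that reappears in the harder part of the proof of Theorem~3.3, which makes the overall exposition more uniform.
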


Hereafter, to simplify the notation, we shall assume that $I=\{1,\ldots,n\}$, so that the function $f^{k,I}$ is simply written as $f^k$, the intersection $I\cap I(\mathbf{w})$ is written as $I(\mathbf{w})$ (where, as in Lemma \ref{Lemma1}, $I(\mathbf{w})$ is the set of all indexes $i\in \{1,\ldots,n\}$ for which $w_i=0$), and so on. The argument for a general $I$ is completely similar.

Before proving Claim \ref{claim1}, let us first complete the proof of Theorem \ref{fmt}.
For that purpose, we look at the set consisting of all integers $k$ for which $f^{k}_{\mathbf{w}}(\mathbf{a})=0$, which is not empty by Claim \ref{claim1}. For simplicity again, we shall assume: 
\begin{equation*}
\begin{aligned}
& f^{k}_{\mathbf{w}}(\mathbf{a})=0  &&\mbox{ for }  && 1\leq k\leq k_0'\leq k_0;\\
& f^{k}_{\mathbf{w}}(\mathbf{a})\not=0  &&\mbox{ for }  && k'_0+1\leq k\leq k_0.
\end{aligned}
\end{equation*}
Write $f=f^1\cdots f^{k_0'}\cdot h$, where $h:=f^{k_0'+1}\cdots f^{k_0}$ if $k_0'\leq k_0-1$ and $h:=1$ if $k_0'=k_0$. Then for all $1\leq i\leq n$, we have
\begin{equation}\label{e1}
\frac{\partial f}{\partial z_i}(\mathbf{z}(s)) = \sum_{k=1}^{k_0'}
\bigg( \frac{\partial f^{k}}{\partial z_i}(\mathbf{z}(s))\cdot h(\mathbf{z}(s))\cdot\prod_{\genfrac{}{}{0pt}{}{1\leq \ell\leq k_0'}{\ell\not=k}} f^{\ell}(\mathbf{z}(s))\bigg)
+\frac{\partial h}{\partial z_i}(\mathbf{z}(s))\cdot\prod_{1\leq k\leq k_0'} f^{k}(\mathbf{z}(s)).
\end{equation}
For each $1\leq k\leq k_0'$, if $o_k\equiv\mbox{ord}\, f^{k}(\mathbf{z}(s))$ denotes the order (in $s$) of $f^{k}(\mathbf{z}(s))$ and if $e_k:=d(\mathbf{w};f^{k})-o_k+\sum_{\ell=1}^{k_0'} o_{\ell}$, then 
\begin{equation}\label{e2}
\mbox{ord}\bigg( \frac{\partial f^{k}}{\partial z_i}(\mathbf{z}(s))\cdot h(\mathbf{z}(s))\cdot\prod_{\genfrac{}{}{0pt}{}{1\leq \ell\leq k_0'}{\ell\not=k}} f^{\ell}(\mathbf{z}(s))\bigg)\geq
d(\mathbf{w};h)-w_i+e_k,
\end{equation}
and the equality holds if and only if $\frac{\partial f^{k}_{\mathbf{w}}}{\partial z_i}(\mathbf{a})\not=0$.
Since $o_k>d(\mathbf{w};f^{k})$ for $1\leq k\leq k'_0$, we also have
\begin{equation}\label{e3}
\mbox{ord}\bigg(\frac{\partial h}{\partial z_i}(\mathbf{z}(s))\cdot\prod_{\ell=1}^{k_0'} f^{\ell}(\mathbf{z}(s))\bigg) \geq d(\mathbf{w};h)-w_i+\sum_{\ell=1}^{k_0'} o_{\ell} > 
d(\mathbf{w};h)-w_i+e_k
\end{equation}
for all $1\leq k\leq k_0'$.
Still for simplicity, let us assume that
\begin{equation*}
e_{\mbox{\tiny min}}:=e_1=\cdots=e_{k_0''}<e_{k_0''+1}\leq \cdots\leq e_{k_0'}.
\end{equation*}
The relations \eqref{e1}, \eqref{e2} and \eqref{e3} show that there exist $\mu_1,\ldots,\mu_{k''_0}\in\mathbb{C}^*$ such that for any $1\leq i\leq n$:
\begin{equation*}
\frac{\partial f}{\partial z_{i}}(\mathbf{z}(s))=\sum_{k=1}^{k''_0}\frac{\partial {f^{k}_{\mathbf{w}}}}{\partial z_{i}} (\mathbf{a})\cdot \mu_k\cdot s^{d(\mathbf{w};h)-w_{i}+e_{\mbox{\tiny min}}}+\cdots,
\end{equation*}
and hence, by multiplying both sides of the relation (i) by $s^{w_i}$: 
\begin{equation}\label{ibciol}
\sum_{k=1}^{k''_0}\frac{\partial {f^{k}_{\mathbf{w}}}}{\partial z_{i}} (\mathbf{a})\cdot \mu_k\cdot s^{d(\mathbf{w};h)+e_{\mbox{\tiny min}}}+\cdots = 
\lambda_0 \bar a_{i}s^{c+2w_{i}}+\cdots.
\end{equation}
Note that the coefficient $\lambda_0 \bar a_i$ of $s^{c+2w_i}$ on the right-hand side of \eqref{ibciol} being non-zero, we must have $d(\mathbf{w};h)+e_{\mbox{\tiny min}}\leq c+2w_i$ for any $1\leq i\leq n$, and since $I(\mathbf{w})\not=\emptyset$ (by (iii)), in fact we have $d(\mathbf{w};h)+e_{\mbox{\tiny min}}\leq c$. It follows that for any $i\notin I(\mathbf{w})$, the sum
\begin{equation*}
S_i:=\sum_{k=1}^{k''_0} \mu_k \frac{\partial {f^{k}_{\mathbf{w}}}}{\partial z_{i}} (\mathbf{a})
\end{equation*} 
 vanishes. (Indeed, if there exists $i_0\notin I(\mathbf{w})$ such that $S_{i_0}\not=0$, then 
$
c+2w_{i_0}=d(\mathbf{w};h)+e_{\mbox{\tiny min}}\leq c, 
$
which is a contradiction.) Now, if we also have $S_i=0$ for all $i\in I(\mathbf{w})$, then the condition (2) of Lemma \ref{Lemma1} is satisfied. (Note that the complex number denoted by $\lambda$ in Lemma \ref{Lemma1} may vanish.) However, the relation (iii) implies
\begin{equation}\label{contra-a}
\mathbf{a}\in\bigg\{\mathbf{z}\in \mathbb{C}^{*n};\sum_{i\in I(\mathbf{w})}|z_i|^2\leq \varepsilon^2\bigg\},
\end{equation}
which contradicts the conclusion of this lemma.
If there exists $i_0\in I(\mathbf{w})$ such that $S_{i_0}\not=0$, then it follows that
$S_i\not=0$ for any $i\in I(\mathbf{w})$, so that for all such $i$'s:
\begin{equation*}
S_i\equiv\sum_{k=1}^{k''_0} \mu_k \frac{\partial {f^{k}_{\mathbf{w}}}}{\partial z_i} (\mathbf{a}) = \lambda_0 \bar a_i.
\end{equation*}
Thus the condition (2) of Lemma \ref{Lemma1} is satisfied in this case too, and again the relation (iii) (which implies \eqref{contra-a}) leads to a contradiction with the conclusion of the lemma. So, up to Claim \ref{claim1}, the theorem is proved.

To complete the proof of the theorem, it remains to prove Claim \ref{claim1}.

\begin{proof}[Proof of Claim \ref{claim1}]
Again, to simplify, we assume $I=\{1,\ldots,n\}$, so that $f^{k,I}=f^k$, $I\cap I(\mathbf{w})=I(\mathbf{w})$, etc. We argue by contradiction. Suppose $f^{k}_{\mathbf{w}}(\mathbf{a})\not=0$ for all $1\leq k\leq k_0$. Then  $d(\mathbf{w};f^{k})=o_k$ for all $1\leq k\leq k_0$, where $o_k$ is the order of $f^{k}(\mathbf{z}(s))$. Also, note that, by (ii), there exists $1\leq k_1\leq k_0$ such that $f^{k_1}(\mathbf{z}(0))=0$. If $I(\mathbf{w})=\{1,\ldots,n\}$, then $d(\mathbf{w};f^{k_1})=0$ and
\begin{equation*}
f^{k_1}(\mathbf{z}(s))=f^{k_1}_{\mathbf{w}}(\mathbf{a})\, s^0+\cdots, 
\end{equation*}
and therefore $0=f^{k_1}(\mathbf{z}(0))=f^{k_1}_{\mathbf{w}}(\mathbf{a})$, which is a contradiction. So, from now on, suppose that $I(\mathbf{w})$ is a proper subset of $\{1,\ldots,n\}$ and $d(\mathbf{w};f^{k_1})\not=0$.
Put $e:=\sum_{k=1}^{k_0} o_{k}$. 
Then, as above, there exist non-zero complex numbers $\mu_1,\ldots,\mu_{k_0}$  (actually, here, for each $k$, $\mu_k=\prod_{\ell\not= k}f^{\ell}_{\mathbf{w}}(\mathbf{a})$) such that for any $1\leq i\leq n$:
\begin{equation*}
\frac{\partial f}{\partial z_{i}}(\mathbf{z}(s))=\sum_{k=1}^{k_0}\frac{\partial {f^{k}_{\mathbf{w}}}}{\partial z_{i}} (\mathbf{a})\cdot \mu_k\cdot s^{-w_{i}+e}+\cdots,
\end{equation*}
and hence, by multiplying both sides of the relation (i) by $s^{w_i}$:
\begin{equation*}
\sum_{k=1}^{k_0}\frac{\partial {f^{k}_{\mathbf{w}}}}{\partial z_{i}} (\mathbf{a})\cdot \mu_k\cdot s^{e}+\cdots = 
\lambda_0 \bar a_{i}s^{c+2w_{i}}+\cdots.
\end{equation*}
Again, since $\lambda_0 \bar a_i\not=0$ and $I(\mathbf{w})\not=\emptyset$, we have $e\leq c$ and the sum $\sum_{k=1}^{k_0}\mu_k\frac{\partial {f^{k}_{\mathbf{w}}}}{\partial z_{i}} (\mathbf{a})$ vanishes for all $i\in I(\mathbf{w})^c:=\{1,\ldots,n\}\setminus I(\mathbf{w})$. As $f^{k}_{\mathbf{w}}$ is weighted homogeneous, this, together with the Euler identity, implies that 
\begin{align*}
0 & = \sum_{i\in I(\mathbf{w})^c}a_i w_i\bigg(\underbrace{\sum_{k=1}^{k_0}\frac{\partial {f^{k}_{\mathbf{w}}}}{\partial z_{i}} (\mathbf{a})\cdot \mu_k}_{=0}\bigg) = 
\sum_{k=1}^{k_0}\bigg(\prod_{\genfrac{}{}{0pt}{}{1\leq \ell\leq k_0}{\ell\not=k}}f^{\ell}_{\mathbf{w}}(\mathbf{a})\cdot \sum_{i\in I(\mathbf{w})^c}a_i w_i \frac{\partial {f^{k}_{\mathbf{w}}}}{\partial z_{i}} (\mathbf{a})\bigg)\\
& = \sum_{k=1}^{k_0}\bigg(\prod_{\genfrac{}{}{0pt}{}{1\leq \ell\leq k_0}{\ell\not=k}}f^{\ell}_{\mathbf{w}}(\mathbf{a})\bigg)\cdot d(\mathbf{w};f^{k})\cdot f^{k}_{\mathbf{w}}(\mathbf{a})
= \bigg(\prod_{\ell=1}^{k_0}f^{\ell}_{\mathbf{w}}(\mathbf{a})\bigg)\cdot\sum_{k=1}^{k_0} d(\mathbf{w};f^{k})\not=0,
\end{align*}
which is a contradiction too.
\end{proof}
This completes the proof of Theorem \ref{fmt} (up to Lemma \ref{Lemma1}).
\end{proof}

Now let us prove Lemma \ref{Lemma1}.

\begin{proof}[Proof of Lemma \ref{Lemma1}]
First, observe that if the assertion fails for some $k_1,\dots,k_m$, $I$ and $\Delta(\mathbf{w};f^{k_1}),\ldots,\Delta(\mathbf{w};f^{k_m})$ such that $I\cap I(\mathbf{w})=\emptyset$, then for any $\varepsilon>0$ the set 
\begin{align*}
\bigg\{\mathbf{z}\in \mathbb{C}^{*I}\, ;\, \sum_{i\in I\cap I(\mathbf{w})=\emptyset}|z_i|^2\leq \varepsilon^2\bigg\}
\end{align*}
is nothing but $\mathbb{C}^{*I}$ and there exists a point $\mathbf{a}$ in it that satisfies the conditions (1) and (2) of the lemma; in particular, $\mathbf{a}\in V^{*I}(f^{k_1}_{\mathbf{w}},\ldots,f^{k_m}_{\mathbf{w}})$ and the vectors $\mathbf{z}^{k_{1}}(\mathbf{a}),\ldots,\mathbf{z}^{k_{m}}(\mathbf{a})\in \mathbb{C}^I$ whose $i$th coordinates ($i\in I$) are 
\begin{equation*}
\frac{\partial f_{\mathbf{w}}^{k_{1},I}}{\partial z_{i}}(\mathbf{a}),\ldots,\frac{\partial f_{\mathbf{w}}^{k_{m},I}}{\partial z_{i}}(\mathbf{a}),
\end{equation*}
 respectively, are linearly dependent, that is,
\begin{align*}
df^{k_1,I}_{\mathbf{w}}(\mathbf{a})\wedge\cdots\wedge df^{k_m,I}_{\mathbf{w}}(\mathbf{a})=0.
\end{align*}
However, since $I\in\mathcal{I}(f^{k_1})\cap\cdots\cap\mathcal{I}(f^{k_m})$ and $I\cap I(\mathbf{w})=\emptyset$, this contradicts Assumptions \ref{ass-srfmf} which imply
\begin{align*}
df^{k_1,I}_{\mathbf{w}}(\mathbf{p})\wedge\cdots\wedge df^{k_m,I}_{\mathbf{w}}(\mathbf{p})\not=0
\end{align*}
for any $\mathbf{p}\in V^{*I}(f^{k_1}_{\mathbf{w}},\ldots,f^{k_m}_{\mathbf{w}})$ (see Remark \ref{rem-rI}).

Now, assume that the assertion in Lemma \ref{Lemma1} fails for some $k_1,\dots,k_m$, $I$ and 
$\Delta(\mathbf{w};f^{k_1}),\ldots,\allowbreak\Delta(\mathbf{w};f^{k_m})$ 
such that $I\cap I(\mathbf{w})\not=\emptyset$. 
Again, without loss of generality, and in order to simplify the notation, we assume that $I=\{1,\ldots,n\}$, so that $f^{k,I}_{\mathbf{w}}=f^{k}_{\mathbf{w}}$, $I\cap I(\mathbf{w})=I(\mathbf{w})$, $\mathbb{C}^{*I}=\mathbb{C}^{*n}$, etc.
Then there is a sequence $\{\mathbf{p}_q\}_{q\in\mathbb{N}}$ of points in $\mathbb{C}^{*n}$ and a sequence $\{\lambda_q\}_{q\in\mathbb{N}}$ of complex numbers such that:
\begin{enumerate}
\item
$f_\mathbf{w}^{k_1}(\mathbf{p}_q)=\cdots=f_\mathbf{w}^{k_m}(\mathbf{p}_q)=0$ for all $q\in\mathbb{N}$;
\item
there exists a sequence $\{(\mu_{k_1,q},\ldots,\mu_{k_m,q})\}_{q\in\mathbb{N}}$ of points in $\mathbb{C}^m\setminus\{\mathbf{0}\}$ such that for all $q\in\mathbb{N}$ and all $1\leq i\leq n$:
\begin{equation*}
\sum_{j=1}^m \mu_{k_j,q}\frac{\partial f_{\mathbf{w}}^{k_j}}{\partial z_i}(\mathbf{p}_q)=\left\{
\begin{aligned}
& \lambda_q \, \bar p_{q,i} &&\mbox{if} && i\in I(\mathbf{w}),\\
& 0 &&\mbox{if} && i\notin I(\mathbf{w}),
\end{aligned}
\right.
\end{equation*}
where, for each $1\leq i\leq n$, $\bar p_{q,i}$ denotes the conjugate of the $i$th coordinate $p_{q,i}$  of~$\mathbf{p}_q$;
\item 
$\sum_{i\in I(\mathbf{w})} |p_{q,i}|^2\to 0$ as $q\to\infty$.
\end{enumerate}

For any $\zeta\in\mathbb{C}$ and any $\mathbf{z}\in \mathbb{C}^n$, let $\zeta*\mathbf{z}=((\zeta*\mathbf{z})_1,\ldots,(\zeta*\mathbf{z})_n)$ be the point of $\mathbb{C}^n$ defined by
\begin{equation*}
(\zeta*\mathbf{z})_i:= \zeta^{w_i}z_i=
\left\{
\begin{aligned}
&z_i &&\mbox{for} && i\in I(\mathbf{w}),\\
& \zeta^{w_i}z_i &&\mbox{for} && i\notin I(\mathbf{w}).\\
\end{aligned}
\right.
\end{equation*}
Then pick a sequence $\{\zeta_q\}_{q\in\mathbb{N}}$ of points in $\mathbb{C}^{*}$ that converges to zero sufficiently fast so that the sequence $\{\zeta_q*\mathbf{p}_q\}_{q\in\mathbb{N}}$ converges to the origin of $\mathbb{C}^{n}$. Clearly, $\{\zeta_q*\mathbf{p}_q\}_{q\in\mathbb{N}}$ also satisfies the above properties (1), (2) and (3). Indeed, for any $1\leq j\leq m$, we have
\begin{equation*}
f_{\mathbf{w}}^{k_j}(\zeta_q*\mathbf{p}_q)=\zeta_q^{d(\mathbf{w};f^{k_j})}f_{\mathbf{w}}^{k_j}(\mathbf{p}_q)=0,
\end{equation*}
 so $\{\zeta_q*\mathbf{p}_q\}_{q\in\mathbb{N}}$ satisfies (1). For each $1\leq i\leq n$, we also have
\begin{equation*}
\frac{\partial f_{\mathbf{w}}^{k_j}}{\partial z_i}(\zeta_q*\mathbf{p}_q)=
\zeta_q^{d(\mathbf{w};f^{k_j})-w_i} \, \frac{\partial f_{\mathbf{w}}^{k_j}}{\partial z_i}(\mathbf{p}_q),
\end{equation*}
and since $\zeta_q^{w_i}=1$ for all $i\in I(\mathbf{w})$ and $\zeta_q^{w_i}$ (which is non-zero) is independent of the index $j$ ($1\leq j\leq m$) for all $i\notin I(\mathbf{w})$, it follows that
\begin{equation*}
\sum_{j=1}^m \frac{\mu_{k_j,q}}{\zeta_q^{d(\mathbf{w};f^{k_j})}}
\frac{\partial f_{\mathbf{w}}^{k_j}}{\partial z_i}(\zeta_q*\mathbf{p}_q)=\left\{
\begin{aligned}
& \lambda_q \, \bar p_{q,i} &&\mbox{for} && i\in I(\mathbf{w}),\\
& 0 &&\mbox{for} && i\notin I(\mathbf{w}),
\end{aligned}
\right.
\end{equation*}
so that the sequence $\{\zeta_q*\mathbf{p}_q\}_{q\in\mathbb{N}}$ satisfies (2) with the complex numbers $\mu_{k_j,q} / \zeta_q^{d(\mathbf{w};f^{k_j})}$ ($1\leq j\leq m$). Finally,
\begin{equation*}
\sum_{i\in I(\mathbf{w})} |(\zeta_q*\mathbf{p}_q)_i|^2 = 
\sum_{i\in I(\mathbf{w})} |p_{q,i}|^2\to 0 
\end{equation*}
as $q\to\infty$, so $\{\zeta_q*\mathbf{p}_q\}_{q\in\mathbb{N}}$ also satisfies (3). Altogether, $\{\zeta_q*\mathbf{p}_q\}_{q\in\mathbb{N}}$ satisfies the properties (1)--(3).
Therefore, we can apply the Curve Selection Lemma to this situation in order to find a real analytic curve $\mathbf{a}(s)=(a_1(s),\ldots,a_n(s))$ in $\mathbb{C}^n$, $0\leq s\leq 1$,  and a family of complex numbers $\lambda(s)$, $0<s\leq 1$, such that:
\begin{enumerate}
\item[($1'$)]
$f_\mathbf{w}^{k_1}(\mathbf{a}(s))=\cdots=f_\mathbf{w}^{k_m}(\mathbf{a}(s))=0$ for all $s\not=0$;
\item[($2'$)]
there exists a real analytic curve $(\mu_{k_1}(s),\ldots,\mu_{k_m}(s))$ in $\mathbb{C}^m\setminus\{\mathbf{0}\}$, $0< s\leq 1$, such that for all $s\not=0$ and all $1\leq i\leq n$:
\begin{equation*}
\sum_{j=1}^m \mu_{k_j}(s)\, \frac{\partial f_{\mathbf{w}}^{k_j}}{\partial z_i}(\mathbf{a}(s))=\left\{
\begin{aligned}
& \lambda(s)\, \bar a_{i}(s) &&\mbox{if} &&i\in I(\mathbf{w}),\\
& 0 &&\mbox{if} &&i\notin I(\mathbf{w});
\end{aligned}
\right.
\end{equation*}
\item[($3'$)]
$\mathbf{a}(0)=\mathbf{0}$ and $\mathbf{a}(s)\in \mathbb{C}^{*n}$ for $s\not=0$.
\end{enumerate}

For each $1\leq i\leq n$, consider the Taylor expansion
\begin{equation*}
a_i(s)=b_i s^{v_i}+\cdots,
\end{equation*}
where $b_i\in\mathbb{C}^*$ and $v_i\in\mathbb{N}^*$.
Since the $v_i$'s are all positive, for each $1\leq j\leq m$ the face $\Delta\big(\mathbf{v};f_{\mathbf{w}}^{k_j}\big)$ is a compact face of $\Delta(\mathbf{w};f^{k_j})$, and hence $\Delta\big(\mathbf{v};f_{\mathbf{w}}^{k_j}\big)$ is a face of $\Gamma(f^{k_j})$, where $\mathbf{v}$ is the point of $\mathbb{N}^{*n}$ whose $i$th coordinate is $v_i$. Also, note that for each $j$, we have $d\big(\mathbf{v};f_{\mathbf{w}}^{k_j}\big)>0$, and since 
\begin{equation*}
0=f_{\mathbf{w}}^{k_j}(\mathbf{a}(s))=\big(f_{\mathbf{w}}^{k_j}\big)_{\mathbf{v}}(\mathbf{b})\, s^{d\big(\mathbf{v};f_{\mathbf{w}}^{k_j}\big)}+\cdots
\end{equation*}
 for all $s\not=0$, we also have $\big(f_{\mathbf{w}}^{k_j}\big)_{\mathbf{v}}(\mathbf{b})=0$, where $\mathbf{b}$ is the point of $\mathbb{C}^{*n}$ whose $i$th coordinate is $b_i$. (As usual, $\big(f_{\mathbf{w}}^{k_j}\big)_{\mathbf{v}}$ is the face function of $f_{\mathbf{w}}^{k_j}$ with respect to $\mathbf{v}$.)

Write $\mu_{k_j}(s)=\mu_{k_j} s^{g_j}+\cdots$, where $\mu_{k_j}\not=0$. If $\mu_{k_j}(s)\equiv 0$, then $g_j=\infty$. Let 
\begin{equation*}
\delta:=\mbox{min}\{d\big(\mathbf{v};f_{\mathbf{w}}^{k_1}\big)+g_1,\ldots,d\big(\mathbf{v};f_{\mathbf{w}}^{k_m}\big)+g_m\},
\end{equation*}
 and put
\begin{equation*}
\tilde\mu_{k_j} = 
\left\{
\begin{aligned}
& \mu_{k_j}  &&\mbox{if} && d\big(\mathbf{v};f_{\mathbf{w}}^{k_j}\big)+g_j=\delta,\\
& 0 &&\mbox{if} && d\big(\mathbf{v};f_{\mathbf{w}}^{k_j}\big)+g_j>\delta.
\end{aligned}
\right.
\end{equation*}

\begin{claim}\label{claim20210426}
There exists $i_0\in I(\mathbf{w})$ such that 
$
\sum_{j=1}^m \tilde\mu_{k_j} \frac{\partial \big(f_{\mathbf{w}}^{k_j}\big)_{\mathbf{v}}}{\partial z_{i_0}}(\mathbf{b})\not=0.
$
(We recall that $\mu_{k_j}(s)\not\equiv 0$ for at least an index $j$.)
\end{claim}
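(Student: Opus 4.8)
The plan is to derive a contradiction from assuming that $\sum_{j=1}^m \tilde\mu_{k_j} \frac{\partial (f_{\mathbf{w}}^{k_j})_{\mathbf{v}}}{\partial z_i}(\mathbf{b}) = 0$ for \emph{all} $i\in I(\mathbf{w})$. First I would analyze the orders in $s$ appearing on both sides of the relation ($2'$). Substituting the Taylor expansions $a_i(s)=b_i s^{v_i}+\cdots$ and $\mu_{k_j}(s)=\mu_{k_j}s^{g_j}+\cdots$ into the left-hand side of ($2'$), the lowest-order contribution to $\sum_{j=1}^m \mu_{k_j}(s)\frac{\partial f_{\mathbf{w}}^{k_j}}{\partial z_i}(\mathbf{a}(s))$ is governed by the quantity $d(\mathbf{v};f_{\mathbf{w}}^{k_j})-v_i+g_j$ for each $j$, since $\frac{\partial f_{\mathbf{w}}^{k_j}}{\partial z_i}(\mathbf{a}(s))$ has order at least $d(\mathbf{v};f_{\mathbf{w}}^{k_j})-v_i$ with equality when $\frac{\partial (f_{\mathbf{w}}^{k_j})_{\mathbf{v}}}{\partial z_i}(\mathbf{b})\neq 0$. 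So the coefficient of the leading power $s^{\delta - v_i}$ in the $i$th component of the left-hand side is precisely $\sum_{j=1}^m \tilde\mu_{k_j}\frac{\partial (f_{\mathbf{w}}^{k_j})_{\mathbf{v}}}{\partial z_i}(\mathbf{b})$.

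Next I would use the weighted-homogeneity of $f_{\mathbf{w}}^{k_j}$ (as a polynomial, via the compact face $\Delta(\mathbf{v};f_{\mathbf{w}}^{k_j})$, it follows that $(f_{\mathbf{w}}^{k_j})_{\mathbf{v}}$ is weighted homogeneous of degree $d(\mathbf{v};f_{\mathbf{w}}^{k_j})$ with respect to $\mathbf{v}$) together with the Euler identity. The idea is that if the leading coefficient vanished for all $i\in I(\mathbf{w})$, then combined with the condition $i\notin I(\mathbf{w})$ case of ($2'$) — which forces the left-hand side to be identically zero there, hence its leading coefficient $\sum_j \tilde\mu_{k_j}\frac{\partial (f_{\mathbf{w}}^{k_j})_{\mathbf{v}}}{\partial z_i}(\mathbf{b})$ also vanishes for $i\notin I(\mathbf{w})$ (here one must check the order matching, but the $i\notin I(\mathbf{w})$ equation being $0$ with the left side having order $\delta-v_i$ forces the $\delta-v_i$ coefficient to vanish) — we would get $\sum_j \tilde\mu_{k_j}\frac{\partial (f_{\mathbf{w}}^{k_j})_{\mathbf{v}}}{\partial z_i}(\mathbf{b})=0$ for every $i\in\{1,\ldots,n\}$. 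This means the 1-forms $d(f_{\mathbf{w}}^{k_j})_{\mathbf{v}}(\mathbf{b})$ are linearly dependent with coefficients $\tilde\mu_{k_j}$ not all zero (at least one $\tilde\mu_{k_j}\neq 0$ by the definition of $\delta$ and the standing assumption $\mu_{k_j}(s)\not\equiv 0$ for some $j$). Since $\mathbf{b}\in\mathbb{C}^{*n}$ and $(f_{\mathbf{w}}^{k_j})_{\mathbf{v}}(\mathbf{b})=0$ for all $j$, and since $\Delta(\mathbf{v};f_{\mathbf{w}}^{k_j})$ is a face of $\Gamma(f^{k_j})$, the point $\mathbf{b}$ lies in $V^{*n}(f^{k_1}_{\mathbf{v}'},\ldots,f^{k_m}_{\mathbf{v}'})$ for the appropriate positive weight $\mathbf{v}'$ realizing these faces (one needs that the face functions $(f_{\mathbf{w}}^{k_j})_{\mathbf{v}}$ are exactly the face functions $f^{k_j}_{\mathbf{w}'}$ for a common positive weight $\mathbf{w}'$ of $f^{k_j}$ — this follows since a compact face of $\Delta(\mathbf{w};f^{k_j})$ is a face of $\Gamma(f^{k_j})$, realizable by some positive weight, and by a small perturbation argument one common such weight works for all $j$). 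Then Assumptions \ref{ass-srfmf} (via Remark \ref{rem-rI}) give $df^{k_1}_{\mathbf{w}'}(\mathbf{b})\wedge\cdots\wedge df^{k_m}_{\mathbf{w}'}(\mathbf{b})\neq 0$, contradicting the linear dependence.

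The main obstacle I anticipate is the bookkeeping needed to rigorously match the orders in $s$ on both sides of ($2'$) and to argue that the $i\notin I(\mathbf{w})$ equations really do force the leading coefficient $\sum_j\tilde\mu_{k_j}\frac{\partial(f_{\mathbf{w}}^{k_j})_{\mathbf{v}}}{\partial z_i}(\mathbf{b})$ to vanish for those indices — one has to rule out spurious cancellations at order $\delta-v_i$ coming from non-leading terms, and handle the possibility that some $d(\mathbf{v};f_{\mathbf{w}}^{k_j})+g_j>\delta$ so that those $j$ contribute only higher-order terms (which is exactly why $\tilde\mu_{k_j}$ is set to $0$ in that case). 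A secondary technical point is justifying that the compact faces $\Delta(\mathbf{v};f_{\mathbf{w}}^{k_j})$, $j=1,\ldots,m$, can be simultaneously realized as faces $\Delta(\mathbf{w}';f^{k_j})$ of the original Newton boundaries by a \emph{single} positive weight vector $\mathbf{w}'\in\mathbb{N}^{*n}$, so that the non-degenerate complete intersection hypothesis applies directly at the point $\mathbf{b}$; this is a standard convexity/perturbation argument but should be stated carefully. Once these order-comparison and face-realization issues are settled, the Euler-identity contradiction itself is short.
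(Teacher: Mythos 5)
Your proposal is correct and coincides with the paper's proof: assuming $\sum_{j}\tilde\mu_{k_j}\,\frac{\partial\big(f_{\mathbf{w}}^{k_j}\big)_{\mathbf{v}}}{\partial z_i}(\mathbf{b})=0$ for all $i\in I(\mathbf{w})$, the $i\notin I(\mathbf{w})$ equations in $(2')$ force the same sum to vanish for every $i$, yielding a linear dependence among the $d\big(f_{\mathbf{w}}^{k_j}\big)_{\mathbf{v}}(\mathbf{b})$ (for the $j$ with $\tilde\mu_{k_j}\ne 0$) that contradicts the non-degeneracy of the corresponding complete intersection. Two minor remarks: for the face-realization step the paper simply takes the explicit positive weight $\mathbf{w}'=\mathbf{v}+\nu\mathbf{w}$ with $\nu\in\mathbb{N}$ large, which works simultaneously for every $j$ with no perturbation argument needed; and the Euler identity you invoke plays no role in the proof of the claim itself\textemdash it is used only afterward, in the remainder of the proof of Lemma~\ref{Lemma1}, to derive the final contradiction once the claim is established.
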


\begin{proof}
First, observe that for all $1\leq j\leq m$ and all $1\leq i\leq n$:
\begin{equation*}
\frac{\partial f_{\mathbf{w}}^{k_j}}{\partial z_{i}}(\mathbf{a}(s)) =
\frac{\partial \big(f_{\mathbf{w}}^{k_j}\big)_{\mathbf{v}}}{\partial z_{i}}(\mathbf{b})\, s^{d\big(\mathbf{v};f_{\mathbf{w}}^{k_j}\big)-v_i} + \cdots.
\end{equation*}
Thus, if the assertion in Claim \ref{claim20210426} fails, then the sum
\begin{equation*}
\sum_{j=1}^m \tilde\mu_{k_j} \frac{\partial \big(f_{\mathbf{w}}^{k_j}\big)_{\mathbf{v}}}{\partial z_{i}}(\mathbf{b})
\end{equation*}
vanishes for all $i\in I(\mathbf{w})$, and so, by ($2'$), it vanishes for all $1\leq i\leq n$. In other words, if $k_{j_1},\ldots,k_{j_p}$ are the elements of the set $\{k_1,\ldots,k_m\}$ for which $d\big(\mathbf{v};f_{\mathbf{w}}^{k_{j_\ell}}\big)+g_{j_\ell}=\delta$, $1\leq \ell\leq p$, then the vectors  
\begin{equation*}
\bigg(\frac{\partial \big(f_{\mathbf{w}}^{k_{j_1}}\big)_{\mathbf{v}}}{\partial z_{1}}(\mathbf{b}),\ldots,\frac{\partial \big(f_{\mathbf{w}}^{k_{j_1}}\big)_{\mathbf{v}}}{\partial z_{n}}(\mathbf{b})\bigg)
,\ldots,
\bigg(\frac{\partial \big(f_{\mathbf{w}}^{k_{j_p}}\big)_{\mathbf{v}}}{\partial z_{1}}(\mathbf{b}),\ldots,\frac{\partial \big(f_{\mathbf{w}}^{k_{j_p}}\big)_{\mathbf{v}}}{\partial z_{n}}(\mathbf{b})\bigg)
\end{equation*}
of $\mathbb{C}^n$ are linearly dependent, that is,
\begin{equation*}
d\big(f_{\mathbf{w}}^{k_{j_1}}\big)_{\mathbf{v}}(\mathbf{b})\wedge\cdots\wedge d\big(f_{\mathbf{w}}^{k_{j_p}}\big)_{\mathbf{v}}(\mathbf{b})=0.
\end{equation*}
As $\big(f_{\mathbf{w}}^{k_{j_\ell}}\big)_{\mathbf{v}} = f_{\mathbf{v}+\nu\mathbf{w}}^{k_{j_\ell}}$ for any sufficiently large integer $\nu\in\mathbb{N}$ (so that $\big(f_{\mathbf{w}}^{k_{j_\ell}}\big)_{\mathbf{v}}$ is the face function of $f^{k_{j_\ell}}$ with respect to the weight vector $\mathbf{v}+\nu\mathbf{w}$) and $\big(f_{\mathbf{w}}^{k_{j_\ell}}\big)_{\mathbf{v}}(\mathbf{b})=0$ for $1\leq \ell\leq p$, this contradicts the non-degeneracy of $V(f^{k_{j_1}},\ldots,f^{k_{j_p}})$ (see Assumptions \ref{ass-srfmf}). 
\end{proof}

Combined with ($2'$) again, Claim \ref{claim20210426} implies that $\lambda(s)$ is not constantly zero.
Write it as a Laurent series $\lambda(s)=\lambda_0 s^c+\cdots$, where $\lambda_0\not=0$. 
Then, still from ($2'$), we deduce that for all $1\leq i\leq n$:
\begin{equation}\label{r-20210607}
\sum_{j=1}^m \tilde\mu_{k_j} \frac{\partial \big(f_{\mathbf{w}}^{k_j}\big)_{\mathbf{v}}}{\partial z_i}(\mathbf{b})\, s^{\delta} + \cdots = 
\left\{
\begin{aligned}
& \lambda_0 \bar b_i\, s^{c+2v_i}+\cdots &&\mbox{if} &&i\in I(\mathbf{w}),\\
& 0 &&\mbox{if} &&i\notin I(\mathbf{w}).
\end{aligned}
\right.
\end{equation}
Put $S_i:=\sum_{j=1}^m \tilde\mu_{k_j} \frac{\partial \big(f_{\mathbf{w}}^{k_j}\big)_{\mathbf{v}}}{\partial z_i}(\mathbf{b})$, and define 
\begin{equation}\label{fvm}
v_0:=\mbox{min}\{v_i\, ;\, i\in I(\mathbf{w})\}
\quad\mbox{and}\quad
I_0 :=\{i\in I(\mathbf{w}) \, ;\,  v_i=v_0\}.
\end{equation}
Since the coefficient $\lambda_0\bar b_i$ on the right-hand side of \eqref{r-20210607} is non-zero and the set of indexes $i\in I(\mathbf{w})$ such that $S_i\not=0$ is not empty (see Claim \ref{claim20210426}), we have $\delta=c+2v_0$ and $S_i\not=0$ for any $i\in I_0$.
In fact, for any $1\leq i\leq n$, the following equality holds:
\begin{equation}\label{cIII}
S_i\equiv\sum_{j=1}^m \tilde\mu_{k_j} \frac{\partial \big(f_{\mathbf{w}}^{k_j}\big)_{\mathbf{v}}}{\partial z_i}(\mathbf{b})=
\left\{
\begin{aligned}
& \lambda_0 \bar b_i && \mbox{if} && i\in I_0,\\
& 0 && \mbox{if} && i\notin I_0.
\end{aligned}
\right.
\end{equation}
Since $I_0\not=\emptyset$ and $\big(f_{\mathbf{w}}^{k_j}\big)_{\mathbf{v}}(\mathbf{b})=0$ ($1\leq j\leq m$), combined with the Euler identity, the relation \eqref{cIII} implies
\begin{align*}
0 & = \sum_{j=1}^{m}\tilde\mu_{k_j}\cdot d\big(\mathbf{v};f_{\mathbf{w}}^{k_j}\big) \cdot \big(f_{\mathbf{w}}^{k_j}\big)_{\mathbf{v}}(\mathbf{b}) = \sum_{j=1}^{m}\tilde\mu_{k_j}\bigg(\sum_{i=1}^n v_i b_i \frac{\partial \big(f_{\mathbf{w}}^{k_j}\big)_{\mathbf{v}}}{\partial z_i}(\mathbf{b})\bigg) \\
& = \sum_{i\in I_0}v_i b_i\bigg( \sum_{j=1}^{m}\tilde\mu_{k_j} \frac{\partial \big(f_{\mathbf{w}}^{k_j}\big)_{\mathbf{v}}}{\partial z_i}(\mathbf{b}) \bigg)
=\lambda_0\cdot\sum_{i\in I_0} v_i |b_i|^2\not=0,
\end{align*}
which is a contradiction. This completes the proof of Lemma \ref{Lemma1}.
\end{proof}

\section{Uniformly stable family and uniform stable radius}\label{section-usfausr}

Now, let $f^1(t,\mathbf{z}),\ldots, f^{k_0}(t,\mathbf{z})$ be non-constant polynomial functions of $n+1$ complex variables $(t,\mathbf{z})=(t,z_1,\ldots,z_n)$ such that $f^k(t,\mathbf{0})=0$ for all $t\in\mathbb{C}$ and all $1\leq k\leq k_0$. As usual, for any $t\in\mathbb{C}$, we write $f^k_t(\mathbf{z}):=f^k(t,\mathbf{z})$. 

\begin{assumptions}\label{ass-sect-usr}
Throughout this section, we suppose that for any sufficiently small $t$ (say, $|t|\leq \tau_0$ for some $\tau_0>0$), the following two conditions hold true:
\begin{enumerate}
\item
for any $1\leq k\leq k_0$, the Newton boundary $\Gamma(f^k_t)$ is independent of $t$ (we may still have $\Gamma(f_t^{k})\not=\Gamma(f_t^{k'})$ for $k\not=k'$);
\item
for any $k_1,\ldots,k_m\in \{k_1,\ldots,k_0\}$, the germ at $\mathbf{0}$ of the variety $V(f^{k_1}_t,\ldots,f^{k_m}_t)$ is the germ of a non-degenerate complete intersection variety.
\end{enumerate}
\end{assumptions}

Note that (1) implies that the set $\mathcal{I}(f^k_t)$ is independent of $t$.

\subsection{Statements of the results of Section \ref{section-usfausr}}
By Lemma \ref{Lemma1}, we know that under Assumptions \ref{ass-sect-usr} there exists $\varepsilon_0>0$ such that for any $k_1,\ldots, k_m\in\{1,\ldots, k_0\}$, any $I\subseteq \{1,\ldots,n\}$ with $I\in\mathcal{I}(f_0^{k_1})\cap\cdots\cap\mathcal{I}(f_0^{k_m})$, any weight vector $\mathbf{w}\in\mathbb{N}^{I}$ and any $\lambda\in \mathbb{C}$,  if $\mathbf{a}\in\mathbb{C}^{I}$ satisfies the conditions (1) and (2) of this lemma for the functions $f_{0,\mathbf{w}}^{k_1,I},\ldots,f_{0,\mathbf{w}}^{k_m,I}$, then 
\begin{equation*}
\mathbf{a}\notin\bigg\{\mathbf{z}\in \mathbb{C}^{*I}\, ;\, \sum_{i\in I\cap I(\mathbf{w})}|z_i|^2\leq \varepsilon_0^2\bigg\}.
\end{equation*} 
(Here, $f_{0,\mathbf{w}}^{k,I}$ denotes the face function $(f_{0}^{k,I})_\mathbf{w}\equiv (f_{0}^{k,I})_{\Delta(\mathbf{w};f_{0}^{k,I})}$ of $f_{0}^{k,I}$ with respect to~$\mathbf{w}$.)

Once for all, let us fix such a number $\varepsilon_0$. Then we have the following result which asserts that if $t$ is small enough, then Lemma \ref{Lemma1} also holds for the functions $f_{t,\mathbf{w}}^{k_1,I},\ldots,f_{t,\mathbf{w}}^{k_m,I}$ with the same number $\varepsilon_0$.

\begin{lemma}\label{Lemma2}
Under Assumptions \ref{ass-sect-usr}, there exists $\tau$ with $0<\tau\leq \tau_0$ such that for any $t\in D_\tau:=\{t\in\mathbb{C}\, ;\, |t|\leq \tau\}$, any $k_1,\ldots, k_m\in\{1,\ldots, k_0\}$, any $I\subseteq \{1,\ldots,n\}$ with $I\in\mathcal{I}(f_t^{k_1})\cap\cdots\cap\mathcal{I}(f_t^{k_m})$, any weight vector $\mathbf{w}\in\mathbb{N}^{I}$ and any $\lambda\in \mathbb{C}$,  if $\mathbf{a}=(a_1,\ldots,a_n)\in\mathbb{C}^{I}$ satisfies the conditions (1) and (2) of Lemma \ref{Lemma1} for the functions $f_{t,\mathbf{w}}^{k_1,I},\ldots,f_{t,\mathbf{w}}^{k_m,I}$, that is, if:
\begin{enumerate}
\item
$f_{t,\mathbf{w}}^{k_1,I}(\mathbf{a})=\cdots=f_{t,\mathbf{w}}^{k_m,I}(\mathbf{a})=0$;
\item
there exists a $m$-tuple $(\mu_{k_1},\ldots,\mu_{k_m})\in\mathbb{C}^m\setminus\{\mathbf{0}\}$ such that for all $i\in I$:
\begin{equation*}
\sum_{j=1}^m \mu_{k_j}\frac{\partial f_{t,\mathbf{w}}^{k_j,I}}{\partial z_{i}}(\mathbf{a})=
\left\{
\begin{aligned}
& \lambda\bar a_{i} &&\mbox{if} && i\in I\cap I(\mathbf{w}),\\
& 0 &&\mbox{if} && i\in I\setminus I(\mathbf{w}),
\end{aligned}
\right.
\end{equation*}
where again $\bar a_i$ is the complex conjugate of $a_i$ and $I(\mathbf{w}):=\{i\in \{1,\ldots,n\};\, w_i=0\}$; 
\end{enumerate}
then we must have
\begin{equation*}
\mathbf{a}\notin\bigg\{\mathbf{z}\in \mathbb{C}^{*I}\, ;\, \sum_{i\in I\cap I(\mathbf{w})}|z_i|^2\leq \varepsilon_0^2\bigg\},
\end{equation*} 
where $\varepsilon_0$ is the number set above.
\end{lemma}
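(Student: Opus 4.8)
\textbf{Proof plan for Lemma \ref{Lemma2}.}

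The plan is to argue by contradiction, using a compactness/limiting argument that reduces the family version to the already-proved Lemma \ref{Lemma1} for the central fibre $f_0$. Suppose no such $\tau$ exists. Then we can find a sequence $t_q\to 0$, together with (for each $q$) a choice of indices $k_1^{(q)},\ldots,k_{m_q}^{(q)}$, a set $I_q$, a weight vector $\mathbf{w}^{(q)}$, a scalar $\lambda_q$, a point $\mathbf{a}_q\in\mathbb{C}^{*I_q}$ with $\sum_{i\in I_q\cap I(\mathbf{w}^{(q)})}|a_{q,i}|^2\le\varepsilon_0^2$, and a tuple $(\mu_{k_j,q})\in\mathbb{C}^{m_q}\setminus\{\mathbf{0}\}$ satisfying conditions (1) and (2) of Lemma \ref{Lemma1} for the face functions $f_{t_q,\mathbf{w}^{(q)}}^{k_j,I_q}$. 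Since there are only finitely many possible subsets $\{k_1,\ldots,k_m\}\subseteq\{1,\ldots,k_0\}$ and finitely many possible $I\subseteq\{1,\ldots,n\}$, after passing to a subsequence we may assume these are all fixed, say equal to $\{k_1,\ldots,k_m\}$ and $I$; by Assumptions \ref{ass-sect-usr}(1), $I\in\mathcal{I}(f_t^{k_1})\cap\cdots\cap\mathcal{I}(f_t^{k_m})$ for all small $t$, in particular for $t_q$ and for $t=0$.

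The crucial point, exactly as in Section \ref{sect-srfmf}, is that the conditions are \emph{invariant under the $\mathbb{C}^*$-action} $\zeta*\mathbf{z}$ associated with $\mathbf{w}$: the equations $f_{t,\mathbf{w}}^{k_j,I}(\zeta*\mathbf{a})=\zeta^{d(\mathbf{w};f_t^{k_j})}f_{t,\mathbf{w}}^{k_j,I}(\mathbf{a})=0$ and the gradient relation in (2) (with $\mu_{k_j}$ rescaled by $\zeta^{-d(\mathbf{w};f_t^{k_j})}$) are preserved, and the quantity $\sum_{i\in I\cap I(\mathbf{w})}|z_i|^2$ is untouched since the action fixes the coordinates indexed by $I(\mathbf{w})$. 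Hence after replacing $\mathbf{a}_q$ by $\zeta_q*\mathbf{a}_q$ for a suitably fast-decaying sequence $\zeta_q\in\mathbb{C}^*$, we may assume $\mathbf{a}_q\to\mathbf{a}_\infty$ for some $\mathbf{a}_\infty\in\mathbb{C}^I$ with $\sum_{i\in I\cap I(\mathbf{w})}|a_{\infty,i}|^2\le\varepsilon_0^2$ — note $\mathbf{a}_\infty$ need \emph{not} lie in $\mathbb{C}^{*I}$, which is fine since we will feed it into the Curve Selection machinery rather than into Lemma \ref{Lemma1} directly. Here one must be a little careful that the weight vector $\mathbf{w}^{(q)}$ can also be taken fixed: the face function $f_{t,\mathbf{w}}^{k,I}$ depends on $\mathbf{w}$ only through the face $\Delta(\mathbf{w};f_t^{k,I})$ of the Newton polyhedron, and by Assumptions \ref{ass-sect-usr}(1) there are only finitely many such faces, all independent of $t$; so passing to a further subsequence we may assume $\Delta(\mathbf{w}^{(q)};f_{t_q}^{k_j,I})$ is a fixed face for each $j$, and we fix one representative $\mathbf{w}$ realising all of them simultaneously.

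Now normalise $(\mu_{k_1,q},\ldots,\mu_{k_m,q})$ to lie on the unit sphere of $\mathbb{C}^m$ (possible since it is nonzero, and this rescales $\lambda_q$ accordingly); passing to a subsequence, assume it converges to $(\mu_{k_1},\ldots,\mu_{k_m})\in\mathbb{C}^m$ of norm one, in particular nonzero. Taking $q\to\infty$ in conditions (1) and (2), and using that the coefficients of $f_{t_q}^{k_j}$ converge to those of $f_0^{k_j}$ (so $f_{t_q,\mathbf{w}}^{k_j,I}\to f_{0,\mathbf{w}}^{k_j,I}$ coefficientwise, as polynomials supported on a fixed face), we obtain $f_{0,\mathbf{w}}^{k_j,I}(\mathbf{a}_\infty)=0$ for all $j$, and a gradient relation of the form $\sum_j\mu_{k_j}\partial_{z_i}f_{0,\mathbf{w}}^{k_j,I}(\mathbf{a}_\infty)=\lambda_\infty\bar a_{\infty,i}$ for $i\in I\cap I(\mathbf{w})$ and $=0$ for $i\in I\setminus I(\mathbf{w})$, \emph{provided} $\lambda_q$ converges; if instead $|\lambda_q|\to\infty$ one divides the relation by $\lambda_q$ first and lands in the case $\lambda_\infty=0$ with a possibly different (but still nonzero, after renormalising the $\mu$'s once more) limiting tuple. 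Either way $\mathbf{a}_\infty$ satisfies conditions (1) and (2) of Lemma \ref{Lemma1} for $f_{0,\mathbf{w}}^{k_j,I}$ with the scalar $\lambda_\infty$, and $\sum_{i\in I\cap I(\mathbf{w})}|a_{\infty,i}|^2\le\varepsilon_0^2$. The only remaining gap is that $\mathbf{a}_\infty$ might have some zero coordinates, so it is not literally a point of $\mathbb{C}^{*I}$. To close this, I would not pass to the limit pointwise but instead observe that the sequence $\{\mathbf{a}_q\}$ together with $\{\lambda_q\}$ (and the fixed data) accumulates at $\mathbf{a}_\infty$ inside the \emph{closed} semialgebraic set cut out by conditions (1), (2) and the ball inequality with $f_{0,\mathbf{w}}^{k_j,I}$ — precisely as in the proof of Lemma \ref{Lemma1} one then invokes the Curve Selection Lemma to produce a real analytic arc landing at a point of $\mathbb{C}^{*I}$ satisfying (1), (2) and the ball inequality, contradicting Lemma \ref{Lemma1}. (Alternatively, restrict attention to the set $I'\subseteq I$ of coordinates that do not vanish at $\mathbf{a}_\infty$, use $I'\in\mathcal{I}(f_0^{k_j})$ and Remark \ref{rem-rI}, and apply Lemma \ref{Lemma1} on $\mathbb{C}^{*I'}$ directly.) The main obstacle is precisely this bookkeeping: ensuring the combinatorial data ($I$, the index set, the face) can be frozen along a subsequence, that the rescaling by $\zeta_q$ does not destroy the ball constraint, and that the possible blow-up of $\lambda_q$ and the loss of the torus condition at the limit are handled — each is routine but must be arranged in the right order.
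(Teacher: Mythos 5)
Your high-level plan — contradiction, subsequence extraction to freeze the finitely many combinatorial data $(k_1,\dots,k_m)$, $I$, $\Delta(\mathbf{w};\cdot)$, the $\mathbb{C}^*$-action $\zeta*\mathbf{z}$ to control the non-$I(\mathbf{w})$ coordinates, and the Curve Selection Lemma — matches the opening of the paper's proof of Lemma \ref{Lemma2}. But the endgame, where you turn the resulting sequence/arc into a contradiction with Lemma \ref{Lemma1}, has a genuine gap, and both of the fixes you sketch for it fail as stated.

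First, your claim that the sequence ``accumulates at $\mathbf{a}_\infty$ inside the closed semialgebraic set cut out by conditions (1), (2) and the ball inequality with $f_{0,\mathbf{w}}^{k_j,I}$'' is false: each $\mathbf{a}_q$ satisfies those conditions for $f_{t_q,\mathbf{w}}^{k_j,I}$, not for $f_{0,\mathbf{w}}^{k_j,I}$, so the sequence does not lie in the $t=0$ set at all, and the Curve Selection Lemma cannot be invoked there. It must be applied in the $(t,\mathbf{z})$-space to the set of pairs satisfying (1), (2), (3) with $f_{t,\mathbf{w}}^{k_j,I}$. This produces an arc $(t(s),\mathbf{a}(s))$ with $t(0)=0$ and $\mathbf{a}(s)\in\mathbb{C}^{*I}$ only for $s\neq 0$; the endpoint $\mathbf{a}(0)$ can have zero coordinates (indeed it does, for $i\notin I(\mathbf{w})$), so CSL does \emph{not} produce a point of $\mathbb{C}^{*I}$ directly contradicting Lemma \ref{Lemma1}.

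Second, your alternative of restricting to $I':=\{i : a_{\infty,i}\neq 0\}$ and applying Lemma \ref{Lemma1} on $\mathbb{C}^{*I'}$ breaks down because the face function of a restriction is not the restriction of a face function: in general $\bigl(f_{0,\mathbf{w}}^{k_j,I}\bigr)\big|_{\mathbb{C}^{I'}}$ need not equal $f_{0,\mathbf{w}}^{k_j,I'}$ (the minimum of the weight form over $\Gamma_+(f_0^{k_j,I})$ and over $\Gamma_+(f_0^{k_j,I'})$ can differ, and the restriction may even be identically zero), $I'$ need not lie in $\mathcal{I}(f_0^{k_j})$ for all $j$, and the gradient equations you carry over only concern $i\in I'$ while the hypothesis of Lemma \ref{Lemma1} on $\mathbb{C}^{*I'}$ also constrains the missing directions.

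What actually closes the argument in the paper is a finer Taylor-leading-term analysis along the CSL arc: writing $a_i(s)=b_is^{v_i}+\cdots$ with $\mathbf{b}\in\mathbb{C}^{*I}$, conditions (1), (2) transfer not to $f_{0,\mathbf{w}}^{k_j,I}$ at $\mathbf{b}$, but to the \emph{doubly-faced} functions $f_{0,\mathbf{w},\mathbf{v}}^{k_j}$, which by weighted homogeneity are the face functions of $f_0^{k_j}$ with respect to the \emph{different} weight $\mathbf{v}+\nu\mathbf{w}$ for $\nu$ large. One then splits on whether $v_{\min}=0$ or $v_{\min}>0$, and inside those, on whether $v_0:=\min\{v_i : i\in I(\mathbf{w})\}$ is zero; in one sub-case a contradiction comes from Lemma \ref{Lemma1} applied with the weight $\mathbf{v}+\nu\mathbf{w}$ together with the monotonicity $I(\mathbf{v}+\nu\mathbf{w})\subseteq I(\mathbf{v})$ to carry over the ball constraint (Claim \ref{claim2-20210426}), while in the others one needs a separate non-degeneracy claim (Claim \ref{claim2}) and the Euler identity for the weighted-homogeneous functions $f_{0,\mathbf{w},\mathbf{v}}^{k_j}$. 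Your proposal does not see the shift to a different weight vector, the necessity of the Euler identity, or the case split, and the phrase ``contradicting Lemma \ref{Lemma1}'' conceals the entire substance of the proof.

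Minor points: your normalisation of $(\mu_{k_j,q})$ on the unit sphere and the care about $\lambda_q\to\infty$ are reasonable but orthogonal to the real difficulty; the paper avoids them by parametrising $\mu_{k_j}(s)$ along the arc and comparing leading orders instead of taking pointwise limits. Also note that the ball constraint is $\le\varepsilon_0^2$ (not $\to 0$ as in Lemma \ref{Lemma1}), so $\mathbf{a}(0)$ is generally non-zero on $I(\mathbf{w})$ — this is why the $v_{\min}=0$ case arises here but not in Lemma \ref{Lemma1}, and it is exactly the case your proposal does not handle.
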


We shall prove Lemma \ref{Lemma2} in \S \ref{subsect-pol2}. Note that it generalizes Lemma 3 of \cite{O1} (obtained by taking $k_0=1$). Using Lemma \ref{Lemma2}, we shall prove the following second important theorem which recovers Theorem~2 of \cite{O1} (obtained for $k_0=1$).

Put $f(t,\mathbf{z}):=f^1(t,\mathbf{z})\cdots f^{k_0}(t,\mathbf{z})$, and as usual write $f_t(\mathbf{z}):=f(t,\mathbf{z})$. 

\begin{theorem}\label{smt}
Under Assumptions \ref{ass-sect-usr}, the family $\{f_t\}_{t\in D_\tau}$ is a uniformly stable family with uniform stable radius $\varepsilon_0$. (Here, $\tau$ is the number that appears in Lemma~\ref{Lemma2} and $\varepsilon_0$ is the number that we have fixed just before the statement of this lemma.)
\end{theorem}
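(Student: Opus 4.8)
The plan is to reduce the statement to Lemma \ref{Lemma2} exactly as Theorem \ref{fmt} was reduced to Lemma \ref{Lemma1}, but now carrying the parameter $t$ through the argument uniformly. Recall that to say $\{f_t\}_{t\in D_\tau}$ is a uniformly stable family with uniform stable radius $\varepsilon_0$ means: for every $0<\varepsilon_1\leq\varepsilon_2<\varepsilon_0$ there is $\delta(\varepsilon_1,\varepsilon_2)>0$, \emph{independent of $t$}, such that for all $t\in D_\tau$ and all $\eta\in\mathbb{C}$ with $0<|\eta|\leq\delta(\varepsilon_1,\varepsilon_2)$, the fibre $f_t^{-1}(\eta)$ is non-singular in $\mathring B_{\varepsilon_0}$ and meets every sphere $S_{\varepsilon_{12}}$, $\varepsilon_1\leq\varepsilon_{12}\leq\varepsilon_2$, transversely. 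First I would argue by contradiction: if this fails, then there are sequences $t_q\in D_\tau$, $\varepsilon_1\leq\varepsilon'\leq\Vert\mathbf z_q\Vert\leq\varepsilon_0$, $\eta_q\to 0$, with $\mathbf z_q\in f_{t_q}^{-1}(\eta_q)$ either singular or non-transverse to its sphere. Passing to a subsequence, $t_q\to t_*\in D_\tau$ and $\mathbf z_q$ stays in the compact shell $\{\varepsilon'\leq\Vert\mathbf z\Vert\leq\varepsilon_0\}$; applying the Curve Selection Lemma to the (real-analytic, after including $t$) set of such bad configurations yields a real-analytic arc $(t(s),\mathbf z(s))$, $0\le s\le 1$, with $t(0)=t_*$, $\mathbf z(0)\neq\mathbf 0$ of norm $\leq\varepsilon_0$, a Lagrange-multiplier relation $\partial f_{t(s)}/\partial z_i(\mathbf z(s))=\lambda(s)\bar z_i(s)$ for $s\neq 0$, and $f_{t(s)}(\mathbf z(s))\to 0$ with $f_{t(s)}(\mathbf z(s))\not\equiv 0$. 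A small subtlety: I should take $t(s)\to t_*$ with $t_*\in D_\tau$ fixed; since $f_t^k$ depends real-analytically on $t$ and $\mathbf z$, all the expansions below go through with $f_{t_*}^k$ in place of $f_t^k$, and the Newton boundaries are those of the fixed $f_{t_*}^k$ (independent of $t$ by Assumptions \ref{ass-sect-usr}(1)).

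Next I would run verbatim the bookkeeping from the proof of Theorem \ref{fmt}. Set $I:=\{i\,;\,z_i(s)\not\equiv 0\}$, expand $z_i(s)=a_i s^{w_i}+\cdots$, $\lambda(s)=\lambda_0 s^c+\cdots$, and (reducing to $I=\{1,\dots,n\}$ as in the paper) establish the analogue of Claim \ref{claim1}: some $f^{k}_{t_*,\mathbf w}(\mathbf a)=0$. The proof is identical — assuming all $f^k_{t_*,\mathbf w}(\mathbf a)\neq 0$ forces, via the product rule, the Euler identity, and weighted-homogeneity of the face functions, the equality $0=(\prod_\ell f^\ell_{t_*,\mathbf w}(\mathbf a))\cdot\sum_k d(\mathbf w;f^k_{t_*})\neq 0$, a contradiction. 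Then, grouping the indices $k$ with $f^k_{t_*,\mathbf w}(\mathbf a)=0$, comparing orders in $s$ on the two sides of $s^{w_i}\cdot(\text{relation (i)})$, and using $I(\mathbf w)\neq\emptyset$ (which holds because $\Vert\mathbf z(s)\Vert$ is bounded below, so not all $w_i$ can be positive), I obtain complex numbers $\mu_1,\dots,\mu_{k_0''}$, not all zero, and a possibly-zero $\lambda$ such that $\mathbf a$ satisfies conditions (1) and (2) of Lemma \ref{Lemma1} (hence of Lemma \ref{Lemma2}) for the face functions $f^{k_1,I}_{t_*,\mathbf w},\dots,f^{k_m,I}_{t_*,\mathbf w}$, while the norm bound $\Vert\mathbf z(s)\Vert\geq\varepsilon'$ forces $\mathbf a\in\{\mathbf z\in\mathbb C^{*I};\sum_{i\in I\cap I(\mathbf w)}|z_i|^2\leq\varepsilon_0^2\}$ — contradicting Lemma \ref{Lemma2}. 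Since $t_*\in D_\tau$, Lemma \ref{Lemma2} applies with the \emph{same} $\varepsilon_0$, which is exactly what makes the radius uniform.

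It remains to extract the $t$-independent $\delta(\varepsilon_1,\varepsilon_2)$. I would phrase the contradiction set-up so that the "bad set'' $\Sigma:=\{(t,\mathbf z,\eta)\,;\,t\in D_\tau,\ \varepsilon_1\leq\Vert\mathbf z\Vert\leq\varepsilon_2,\ f_t(\mathbf z)=\eta,\ \mathbf z\text{ singular point of }f_t^{-1}(\eta)\text{ or non-transverse to }S_{\Vert\mathbf z\Vert}\}$ is a (semi-)analytic set, and that its image under $(t,\mathbf z,\eta)\mapsto|\eta|$ has infimum $0$ iff there is a sequence as above; the Curve Selection Lemma then produces the arc, and the argument of the previous paragraph shows $\Sigma$ cannot have $\eta$-values accumulating at $0$, so $\delta:=\inf\{|\eta|:(t,\mathbf z,\eta)\in\Sigma\}>0$ works for all $t\in D_\tau$ simultaneously; a separate identical argument on the shell $\{\Vert\mathbf z\Vert\leq\varepsilon_0\}$ (with no lower norm bound, giving $I(\mathbf w)\neq\emptyset$ only when needed, exactly as in Theorem \ref{fmt}) handles non-singularity of $f_t^{-1}(\eta)$ in $\mathring B_{\varepsilon_0}$. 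The main obstacle, and the only place real care is needed, is the uniformity in $t$: one must make sure the Curve Selection Lemma is applied to a single analytic set in the variables $(t,\mathbf z,\eta,\lambda,\mu)$ so that the limiting arc has $t(0)=t_*\in D_\tau$ (not escaping to the boundary in a way that invalidates Assumptions \ref{ass-sect-usr}), and that the Newton-boundary data used in the order computations is the $t$-independent data guaranteed by Assumptions \ref{ass-sect-usr}(1); once this is set up correctly, everything else is a transcription of the proof of Theorem \ref{fmt} with Lemma \ref{Lemma2} replacing Lemma \ref{Lemma1}.
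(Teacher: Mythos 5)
Your proposal follows the paper's proof essentially verbatim: argue by contradiction, apply the Curve Selection Lemma to the bad set in the $(t,\mathbf z)$-variables to get a real-analytic arc with $t(0)\in D_\tau$, establish the analogue of Claim~\ref{claim1} that some $f^k_{t(0),\mathbf w}(\mathbf a)=0$, repeat the order-counting bookkeeping of Theorem~\ref{fmt}, and derive the contradiction from Lemma~\ref{Lemma2} (which, since $t(0)\in D_\tau$, applies with the same $\varepsilon_0$, yielding the uniformity). One small inaccuracy: you describe the non-singularity part as handled ``exactly as in Theorem~\ref{fmt},'' but in Theorem~\ref{fmt} this was Milnor's Corollary~2.8 (a non-family statement); here the paper replaces it with a separate Curve Selection Lemma argument (Claim~\ref{lafpsmt}, with $\lambda\equiv 0$ and no lower norm bound on $\Vert\mathbf z(s)\Vert$, splitting into the cases $I(\mathbf w)=\emptyset$ and $I(\mathbf w)\neq\emptyset$), which is indeed the ``separate identical argument'' you gesture at, so the substance of your plan is right even if the citation is off.
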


We recall that the family $\{f_t\}_{t\in D_\tau}$ is said to be \emph{uniformly stable} with \emph{uniform stable radius} $\varepsilon_0$ if for any $0<\varepsilon_1\leq \varepsilon_2< \varepsilon_0$, there exists $\delta(\varepsilon_1,\varepsilon_2)>0$ such that for any $\eta\in\mathbb{C}$ with $0<\vert\eta\vert\leq\delta(\varepsilon_1,\varepsilon_2)$, the hypersurface $f_t^{-1}(\eta)$ is non-singular in $\mathring{B}_{\varepsilon_0}:=\{\mathbf{z}\in\mathbb{C}^n\, ;\, \Vert \mathbf{z} \Vert<\varepsilon_0\}$ and transversely intersects the sphere $S_{\varepsilon_{12}}:=\{\mathbf{z}\in\mathbb{C}^n\, ;\, \Vert \mathbf{z} \Vert=\varepsilon_{12}\}$ for any $\varepsilon_1\leq \varepsilon_{12}\leq \varepsilon_2$ and any $t\in D_\tau$.

We shall prove Theorem \ref{smt} in \S \ref{subsect-posmt}, but before giving the proof, let us state the first main theorem of this paper (Theorem \ref{fmtotp} below). For that purpose, we first observe that Theorem \ref{smt} has the following corollary which generalizes Corollary 1 of \cite{O1} (obtained by taking $k_0=1$). 

\begin{corollary}\label{cor1}
Under Assumptions \ref{ass-sect-usr}, the family $\{f_t\}_{t\in D_{\tau_0}}$ is a uniformly stable family.
\end{corollary}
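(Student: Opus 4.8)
The plan is to deduce Corollary \ref{cor1} from Theorem \ref{smt} by a compactness/covering argument on the parameter disc $D_{\tau_0}$. Theorem \ref{smt} applies, strictly speaking, only to a family $\{f_t\}$ over a sufficiently small disc $D_\tau$ centred at $t=0$; but there is nothing special about the point $t=0$. For any fixed $t_* \in D_{\tau_0}$, the translated family $\{f_{t_*+t}\}$ satisfies Assumptions \ref{ass-sect-usr} (the Newton boundaries are still $t$-independent near $t_*$ because they are $t$-independent on all of $D_{\tau_0}$, and each germ $V(f^{k_1}_{t_*+t},\ldots,f^{k_m}_{t_*+t})$ is still a non-degenerate complete intersection variety). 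Hence Theorem \ref{smt}, applied with base point $t_*$ in place of $0$, furnishes a radius $\varepsilon(t_*)>0$ and a parameter radius $\tau(t_*)>0$ such that $\{f_t\}_{|t-t_*|\leq \tau(t_*)}$ is uniformly stable with uniform stable radius $\varepsilon(t_*)$.

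Next I would invoke compactness. The closed disc $D_{\tau_0}$ is compact, and the open discs $\{|t-t_*|<\tau(t_*)\}_{t_*\in D_{\tau_0}}$ cover it, so finitely many of them, say centred at $t_*^{(1)},\ldots,t_*^{(N)}$, already cover $D_{\tau_0}$. Put $\varepsilon_{\min}:=\min\{\varepsilon(t_*^{(1)}),\ldots,\varepsilon(t_*^{(N)})\}>0$. I claim $\{f_t\}_{t\in D_{\tau_0}}$ is uniformly stable with uniform stable radius $\varepsilon_{\min}$. Indeed, fix $0<\varepsilon_1\leq\varepsilon_2<\varepsilon_{\min}$. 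Each member of the finite cover, being uniformly stable with uniform stable radius $\varepsilon(t_*^{(r)})\geq\varepsilon_{\min}>\varepsilon_2$, supplies a $\delta_r=\delta_r(\varepsilon_1,\varepsilon_2)>0$ valid for all $t$ in the $r$th piece; taking $\delta:=\min_r\delta_r>0$ then works uniformly for every $t\in D_{\tau_0}$, since each such $t$ lies in at least one piece. This proves the corollary.

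The only genuine subtlety — and the step I would be most careful about — is verifying that Theorem \ref{smt} really is applicable about an arbitrary base point $t_*$, i.e.\ that Assumptions \ref{ass-sect-usr} are preserved under re-centring the parameter. Condition (1) is immediate since $\Gamma(f^k_t)$ is independent of $t$ throughout $D_{\tau_0}$, hence a fortiori independent of $t$ near $t_*$; condition (2) holds at every single parameter value in $D_{\tau_0}$ by hypothesis, and the re-centred family at parameter $t$ is literally the original family at parameter $t_*+t$, so (2) transfers verbatim. One should also note that $\varepsilon_0$ in the statement of Theorem \ref{smt} was a radius attached to the \emph{specific} base point $0$ via Lemma \ref{Lemma1}; when we re-center, we obtain an analogous radius $\varepsilon(t_*)$ attached to $t_*$, which is all that is needed. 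No new geometric input is required beyond Theorem \ref{smt} itself and the compactness of $D_{\tau_0}$.

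\begin{proof}[Proof of Corollary \ref{cor1}]
Fix an arbitrary $t_*\in D_{\tau_0}$. The translated family $\{f_{t_*+t}\}$ again satisfies Assumptions \ref{ass-sect-usr}: its Newton boundaries are independent of $t$ (they equal those of the original family, which are $t$-independent on $D_{\tau_0}$), and for any $k_1,\ldots,k_m$ the germ $V(f^{k_1}_{t_*+t},\ldots,f^{k_m}_{t_*+t})$ is a germ of a non-degenerate complete intersection variety, because this holds at every parameter value in $D_{\tau_0}$. Hence Theorem \ref{smt}, applied with base point $t_*$, yields numbers $\varepsilon(t_*)>0$ and $\tau(t_*)>0$ such that $\{f_t\}_{|t-t_*|\leq \tau(t_*)}$ is a uniformly stable family with uniform stable radius $\varepsilon(t_*)$.

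The open discs $\mathring{D}(t_*,\tau(t_*)):=\{t\in\mathbb{C}\, ;\, |t-t_*|<\tau(t_*)\}$, for $t_*\in D_{\tau_0}$, form an open cover of the compact set $D_{\tau_0}$. Choose a finite subcover, associated with points $t_*^{(1)},\ldots,t_*^{(N)}\in D_{\tau_0}$, and set
\begin{equation*}
\varepsilon_{\mbox{\tiny min}}:=\min\big\{\varepsilon(t_*^{(1)}),\ldots,\varepsilon(t_*^{(N)})\big\}>0.
\end{equation*}
We claim the family $\{f_t\}_{t\in D_{\tau_0}}$ is uniformly stable with uniform stable radius $\varepsilon_{\mbox{\tiny min}}$. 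Let $0<\varepsilon_1\leq\varepsilon_2<\varepsilon_{\mbox{\tiny min}}$. For each $1\leq r\leq N$, since $\varepsilon_2<\varepsilon_{\mbox{\tiny min}}\leq\varepsilon(t_*^{(r)})$ and $\{f_t\}_{|t-t_*^{(r)}|\leq\tau(t_*^{(r)})}$ is uniformly stable with uniform stable radius $\varepsilon(t_*^{(r)})$, there exists $\delta_r(\varepsilon_1,\varepsilon_2)>0$ such that for all $\eta\in\mathbb{C}$ with $0<|\eta|\leq\delta_r(\varepsilon_1,\varepsilon_2)$ and all $t$ with $|t-t_*^{(r)}|\leq\tau(t_*^{(r)})$, the hypersurface $f_t^{-1}(\eta)$ is non-singular in $\mathring{B}_{\varepsilon_{\mbox{\tiny min}}}$ and transversely intersects $S_{\varepsilon_{12}}$ for all $\varepsilon_1\leq\varepsilon_{12}\leq\varepsilon_2$. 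Put $\delta(\varepsilon_1,\varepsilon_2):=\min\{\delta_1(\varepsilon_1,\varepsilon_2),\ldots,\delta_N(\varepsilon_1,\varepsilon_2)\}>0$. Given any $t\in D_{\tau_0}$, there is some $r$ with $|t-t_*^{(r)}|<\tau(t_*^{(r)})$, whence the required non-singularity and transversality hold for this $t$ and all $\eta$ with $0<|\eta|\leq\delta(\varepsilon_1,\varepsilon_2)$. This establishes the corollary.
\end{proof}
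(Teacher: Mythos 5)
Your proof is correct and follows essentially the same approach as the paper's: apply Lemma \ref{Lemma1}, Lemma \ref{Lemma2}, and Theorem \ref{smt} re-centred at an arbitrary base point $t_*\in D_{\tau_0}$ to get a small disc around $t_*$ on which the family is uniformly stable, then use compactness of $D_{\tau_0}$ to pass to a finite subcover. The paper leaves the compactness step as a one-liner; you have usefully spelled it out, including the observation that shrinking the stable radius to $\varepsilon_{\min}$ is harmless.
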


\begin{proof}
By Lemma \ref{Lemma1}, for any $t_0\in D_{\tau_0}$, there exists $\varepsilon(t_0)>0$ such that for any $k_1,\ldots, k_m\in\{1,\ldots, k_0\}$, any $I\subseteq \{1,\ldots,n\}$ with $I\in\mathcal{I}(f_{t_0}^{k_1})\cap\cdots\cap\mathcal{I}(f_{t_0}^{k_m})$, any weight vector $\mathbf{w}\in\mathbb{N}^{I}$ and any $\lambda\in \mathbb{C}$,  if $\mathbf{a}\in\mathbb{C}^{I}$ satisfies the conditions (1) and (2) of this lemma for the functions $f_{t_0,\mathbf{w}}^{k_1,I},\ldots,f_{t_0,\mathbf{w}}^{k_m,I}$, then $\mathbf{a}$ does not belong to the set
\begin{equation}\label{ftheset}
\bigg\{\mathbf{z}\in \mathbb{C}^{*I}\, ;\, \sum_{i\in I\cap I(\mathbf{w})}|z_i|^2\leq \varepsilon(t_0)^2\bigg\}.
\end{equation} 
Then, by (the corresponding version of) Lemma \ref{Lemma2}, there exists $\tau(t_0)>0$ such that for any $t\in D_{\tau(t_0)}(t_0):=\{t\in\mathbb{C}\, ;\, |t-t_0|\leq \tau(t_0)\}$, any $k_1,\ldots, k_m\in\{1,\ldots, k_0\}$, any $I\subseteq \{1,\ldots,n\}$ with $I\in\mathcal{I}(f_t^{k_1})\cap\cdots\cap\mathcal{I}(f_t^{k_m})$, any weight vector $\mathbf{w}\in\mathbb{N}^{I}$ and any $\lambda\in \mathbb{C}$,  if $\mathbf{a}\in\mathbb{C}^{I}$ satisfies the conditions (1) and (2) of Lemma \ref{Lemma1} for the functions $f_{t,\mathbf{w}}^{k_1,I},\ldots,f_{t,\mathbf{w}}^{k_m,I}$, then $\mathbf{a}$ does not belong to the set \eqref{ftheset}.
Now, applying (the corresponding version of) Theorem \ref{smt} shows that the family $\{f_t\}_{t\in D_{\tau(t_0)}}$ is a uniformly stable family with uniform stable radius $\varepsilon(t_0)$. Corollary \ref{cor1} then follows from the compactness of the disc $D_{\tau_0}$.
\end{proof}

Now, by \cite[Lemma 2]{O1}, we know that if $\{f_t\}_{t\in D_{\tau_0}}$ is a uniformly stable family\textemdash say, with uniform stable radius $\varepsilon$\textemdash then the Milnor fibrations of $f_t$ and $f_0$ at $\mathbf{0}$ are \emph{isomorphic} for all  $t\in D_{\tau_0}$, that is, for all such $t$'s there exists a fibre-preserving diffeomorphism 
\begin{equation*}
\mathring{B}_\varepsilon\cap f_t^{-1}\Big(S_{\delta(\varepsilon,\frac{\varepsilon}{2})}\Big)
\overset{\sim}{\longrightarrow}
\mathring{B}_\varepsilon\cap f_0^{-1}\Big(S_{\delta(\varepsilon,\frac{\varepsilon}{2})}\Big),
\end{equation*}
where $\delta(\varepsilon,\frac{\varepsilon}{2})$ is the number that appears in the definition of a ``uniform stable family'' given just after the statement of Theorem \ref{smt}, and where $S_{\delta(\varepsilon,\frac{\varepsilon}{2})}:=\{z\in\mathbb{C}\, ;\, \vert z\vert=\delta(\varepsilon,\frac{\varepsilon}{2})\}$.
Combining this result with Corollary \ref{cor1} gives our first main theorem, the statement of which is as follows. Again, the special case $k_0=1$ (for which the functions $f_t$ are necessarily non-degenerate) is already contained in \cite{O1}.

\begin{theorem}\label{fmtotp}
Under Assumptions \ref{ass-sect-usr}, the Milnor fibrations of $f_t$ and $f_0$ at $\mathbf{0}$ are isomorphic for all $t\in D_{\tau_0}$.
\end{theorem}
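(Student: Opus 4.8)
The plan is to deduce Theorem \ref{fmtotp} directly from Corollary \ref{cor1} by invoking the general fact, recorded just above its statement and proved in \cite[Lemma 2]{O1}, that a uniformly stable family has isomorphic Milnor fibrations at $\mathbf{0}$ for all parameter values in the disc. So the real content has already been established: the only thing left is to assemble the pieces in the right order and make sure the hypotheses of each cited result are met.

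First I would note that Assumptions \ref{ass-sect-usr} are exactly the hypotheses of Corollary \ref{cor1}, so that corollary applies verbatim and tells us that $\{f_t\}_{t\in D_{\tau_0}}$ is a uniformly stable family; denote by $\varepsilon$ a uniform stable radius for it (such an $\varepsilon$ exists by the definition of ``uniformly stable'', and one can take the $\varepsilon(t_0)$ produced in the proof of Corollary \ref{cor1}, shrunk if necessary so that it works simultaneously over the whole compact disc). Next, for the fixed $\varepsilon$ I would apply \cite[Lemma 2]{O1}: since $\{f_t\}_{t\in D_{\tau_0}}$ is uniformly stable with radius $\varepsilon$, for every $t\in D_{\tau_0}$ there is a fibre-preserving diffeomorphism
\begin{equation*}
\mathring{B}_\varepsilon\cap f_t^{-1}\Big(S_{\delta(\varepsilon,\frac{\varepsilon}{2})}\Big)
\overset{\sim}{\longrightarrow}
\mathring{B}_\varepsilon\cap f_0^{-1}\Big(S_{\delta(\varepsilon,\frac{\varepsilon}{2})}\Big),
\end{equation*}
where $\delta(\varepsilon,\frac{\varepsilon}{2})>0$ is the constant furnished by the uniform stability. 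This is, by definition, an isomorphism of the Milnor fibrations of $f_t$ and $f_0$ at $\mathbf{0}$, which is precisely the conclusion.

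The one small point that needs care — and the only place where something could genuinely go wrong — is making sure the radius $\varepsilon$ can be chosen uniformly over all $t\in D_{\tau_0}$ rather than only locally in $t$. In the proof of Corollary \ref{cor1} the radius $\varepsilon(t_0)$ a priori depends on $t_0$, and it is the compactness of $D_{\tau_0}$ that lets one pass from the local uniform stability on each $D_{\tau(t_0)}(t_0)$ to a global statement; one should verify that this gluing also produces a single radius $\varepsilon$ (e.g.\ the minimum of finitely many $\varepsilon(t_0)$'s over a finite subcover) that is a uniform stable radius for the whole family, since Lemma \ref{Lemma1}'s conclusion is monotone in $\varepsilon$ (shrinking $\varepsilon$ only weakens the set one must avoid, hence preserves the conclusion) and likewise shrinking the radius preserves non-singularity and transversality of fibres. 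Once this is in hand, \cite[Lemma 2]{O1} applies with that common $\varepsilon$ and the theorem follows. I expect no real obstacle here: this last theorem is essentially a formal corollary of the substantive work done in Lemma \ref{Lemma1}, Theorem \ref{smt}, and Corollary \ref{cor1}, transported through the already-known equivalence between uniform stability and isomorphism of Milnor fibrations.
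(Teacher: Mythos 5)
Your proposal is correct and follows exactly the route the paper itself takes: Corollary \ref{cor1} gives that $\{f_t\}_{t\in D_{\tau_0}}$ is uniformly stable, and \cite[Lemma~2]{O1} then converts uniform stability into isomorphism of the Milnor fibrations. The extra care you take about extracting a single radius $\varepsilon$ from the finitely many $\varepsilon(t_0)$'s (using compactness and the downward monotonicity of both the conclusion of Lemma \ref{Lemma1} and the definition of uniform stable radius) is a sound observation that the paper leaves implicit in the phrase ``follows from the compactness of the disc''; it does not change the argument.
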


The following two subsections (\S \ref{subsect-posmt} and \S\ref{subsect-pol2}) are devoted to the proofs of Theorem \ref{smt} and Lemma \ref{Lemma2} respectively.

\subsection{Proof of Theorem \ref{smt}}\label{subsect-posmt}
It is along the same lines as the proof of Theorem \ref{fmt}. We start with the following claim which plays a role similar to that of \cite[Corollary 2.8]{Milnor} in the proof of Theorem \ref{fmt}.

\begin{claim}\label{lafpsmt}
There exists $\delta>0$ such that for any $\eta\in\mathbb{C}$ with $0<|\eta|\leq\delta$, the hypersurface $f_t^{-1}(\eta)$ is non-singular in $\mathring{B}_{\varepsilon_0}$ for any $t\in D_{\tau}$. (Of course, we work under Assumptions \ref{ass-sect-usr}.)
\end{claim}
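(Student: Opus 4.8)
The plan is to argue by contradiction along the same lines as the proof of Theorem~\ref{fmt} (using \cite[Corollary 2.8]{Milnor}), but uniformly in $t$. Suppose no such $\delta$ exists. Then there is a sequence $t_q\in D_\tau$, a sequence $\eta_q\in\mathbb{C}^*$ with $\eta_q\to 0$, and points $\mathbf{b}_q\in\mathring{B}_{\varepsilon_0}$ with $f_{t_q}(\mathbf{b}_q)=\eta_q$ at which $f_{t_q}^{-1}(\eta_q)$ is singular, i.e.\ $\operatorname{grad}_{\mathbf{z}} f_{t_q}(\mathbf{b}_q)=\mathbf{0}$. By compactness of $D_\tau\times\overline{B}_{\varepsilon_0}$ we may pass to a subsequence so that $t_q\to t_*\in D_\tau$ and $\mathbf{b}_q\to\mathbf{b}_*$ with $\|\mathbf{b}_*\|\leq\varepsilon_0$ and $f_{t_*}(\mathbf{b}_*)=0$. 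The pair $(t_*,\mathbf{b}_*)$ is then a point of $V(f_{t_*})$ that is a singular point in the naive sense, but this alone does not contradict anything, since $V(f_{t_*})$ genuinely has a non-isolated singular locus. So the real content is to produce a \emph{nearby zero fibre} singularity violating Lemma~\ref{Lemma2}. Instead I would phrase the failure directly at the level of the gradient equation on fibres: consider the real analytic set
\begin{equation*}
\Sigma_t:=\Big\{\mathbf{z}\in\mathring{B}_{\varepsilon_0}\, ;\, \operatorname{grad}_{\mathbf{z}} f_t(\mathbf{z})=\mathbf{0},\ f_t(\mathbf{z})\not=0\Big\},
\end{equation*}
and note that the claim is equivalent to: there is $\delta>0$ with $|f_t(\mathbf{z})|>\delta$ for all $\mathbf{z}\in\Sigma_t$ and all $t\in D_\tau$ (equivalently $0$ is not in the closure of $\{f_t(\mathbf z)\, ;\, \mathbf z\in\Sigma_t,\ t\in D_\tau\}$). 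If not, one finds a sequence as above with $f_{t_q}(\mathbf{b}_q)\to 0$ and $\operatorname{grad}_{\mathbf z}f_{t_q}(\mathbf{b}_q)=\mathbf{0}$, $f_{t_q}(\mathbf b_q)\not=0$.

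Next I would apply the (real analytic, parametrized) Curve Selection Lemma to the set $\{(t,\mathbf{z})\, ;\, \operatorname{grad}_{\mathbf z}f_t(\mathbf z)=\mathbf 0\}$ near $(t_*,\mathbf b_*)$ to extract a real analytic curve $(t(s),\mathbf z(s))$, $0\le s\le 1$, with $t(0)=t_*$, $\mathbf z(0)=\mathbf b_*$, $\operatorname{grad}_{\mathbf z}f_{t(s)}(\mathbf z(s))=\mathbf 0$ for $s\not=0$, $f_{t(s)}(\mathbf z(s))\not\equiv 0$, and $\|\mathbf z(s)\|\le\varepsilon_0$. Writing $I:=\{i\, ;\, z_i(s)\not\equiv 0\}$, Taylor-expanding $z_i(s)=a_is^{w_i}+\cdots$ for $i\in I$, and $t(s)=t_*+\cdots$, I would run exactly the argument from the proof of Theorem~\ref{fmt}: the leading-order analysis of
\begin{equation*}
\frac{\partial f_{t(s)}}{\partial z_i}(\mathbf z(s))=0,\qquad i\in I,
\end{equation*}
combined with Claim~\ref{claim1}'s analogue (which here is even easier since $\lambda\equiv 0$ is allowed, so we never need the right-hand side term), produces a point $\mathbf a\in\mathbb{C}^{*I}$ and a weight $\mathbf w\in\mathbb{N}^I$ and coefficients $(\mu_{k_1},\dots,\mu_{k_m})\ne\mathbf 0$ satisfying conditions (1) and (2) of Lemma~\ref{Lemma2} for the \emph{limit} functions $f_{t_*,\mathbf w}^{k_j,I}$, with $\mathbf a$ lying in $\{\mathbf z\in\mathbb C^{*I}\, ;\,\sum_{i\in I\cap I(\mathbf w)}|z_i|^2\le\varepsilon_0^2\}$ because $\|\mathbf z(s)\|\le\varepsilon_0$. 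Since $t_*\in D_\tau$, this contradicts Lemma~\ref{Lemma2} applied at the parameter value $t_*$.

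The one subtlety — and the place I expect to do actual work rather than cite — is the passage from the moving parameter $t(s)$ to the fixed parameter $t_*$: the partial derivatives $\partial f_{t(s)}^{k}/\partial z_i$ depend analytically on $s$ through both $\mathbf z(s)$ and $t(s)$, so one must check that expanding $t(s)=t_*+t_1s^{r}+\cdots$ does not alter the leading face functions. This is handled exactly as in \cite[\S on Lemma~3]{O1} and in the forthcoming proof of Lemma~\ref{Lemma2}: condition (1) of Assumptions~\ref{ass-sect-usr} forces $\Gamma(f_t^k)$, hence the face decomposition and the candidate face functions $(f_{t}^{k,I})_{\mathbf w}$, to vary analytically in $t$ with \emph{fixed Newton boundary}, so the $s$-leading term of $f_{t(s)}^{k,I}(\mathbf z(s))$ is $(f_{t_*}^{k,I})_{\mathbf w}(\mathbf a)\,s^{d(\mathbf w;f_{t_*}^{k,I})}+\cdots$ and similarly for the derivatives; the perturbation coming from $t(s)-t_*$ only affects strictly higher-order terms. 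An alternative, cleaner route that avoids re-deriving this: prove the present claim \emph{after} Lemma~\ref{Lemma2}, using Lemma~\ref{Lemma2} as a black box exactly as Theorem~\ref{fmt} used Lemma~\ref{Lemma1}, so that the $t$-dependence is already absorbed. Either way, the heart of the matter is the leading-order bookkeeping on $\partial f/\partial z_i$, and the genuinely new ingredient over Theorem~\ref{fmt} is only the uniform (compactness + limit parameter $t_*$) argument, which is routine once Lemma~\ref{Lemma2} is in hand.
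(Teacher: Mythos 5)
Your proposal is correct and follows essentially the same route as the paper: argue by contradiction, apply the Curve Selection Lemma to obtain a real analytic curve $(t(s),\mathbf{z}(s))$ in $D_\tau\times\mathring{B}_{\varepsilon_0}$ along which $\operatorname{grad}_{\mathbf{z}}f_{t(s)}(\mathbf{z}(s))=0$ while $f_{t(s)}(\mathbf{z}(s))\not\equiv 0$, run the analogue of Claim~\ref{claim1} (the paper's Claim~\ref{claim1f}), and then conclude by contradicting either the non-degeneracy (when $I\cap I(\mathbf{w})=\emptyset$) or Lemma~\ref{Lemma2} applied with $\lambda=0$ (when $I\cap I(\mathbf{w})\not=\emptyset$). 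The subtlety you flag about the $t(s)$-dependence of the leading order terms is exactly what the paper absorbs by introducing the extended weight $\hat{\mathbf{w}}=(w_0,\mathbf{w})$ with $t(s)=t_0 s^{w_0}+\cdots$ and using that $\Gamma(f^k_t)$ is constant; your preliminary compactness step to locate the limit parameter $t_*$ is a minor stylistic addition, and your ``alternative cleaner route'' of citing Lemma~\ref{Lemma2} as a black box is in fact precisely what the paper does.
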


We postpone the proof of this claim at the end of \S \ref{subsect-posmt}, and we first complete the proof of Theorem \ref{smt}.
We argue by contradiction. By Claim \ref{lafpsmt}, if the assertion in Theorem \ref{smt} is false, then it follows from the Curve Selection Lemma that there exists a real analytic curve $(t(s),\mathbf{z}(s))=(t(s),z_1(s),\ldots,z_n(s))$ in $D_\tau\times \mathring{B}_{\varepsilon_0}$, $0\leq s\leq 1$, and a family of complex numbers $\lambda(s)$, $0< s\leq 1$, such that the following three condition holds:
\begin{enumerate}
\item[(i)]
$\frac{\partial f_{t(s)}}{\partial z_i}(\mathbf{z}(s)) = \lambda(s) \bar z_i(s)$ for $1\leq i\leq n$ and $s\not=0$;
\item[(ii)]
$f_{t(0)}(\mathbf{z}(0))=0$ but $f_{t(s)}(\mathbf{z}(s))\not=0$ for $s\not=0$;
\item[(iii)]
there exists $\varepsilon>0$ such that $\varepsilon\leq \Vert \mathbf{z}(s)\Vert\leq \varepsilon_0$.
\end{enumerate}
By (i) and (ii), $\lambda(s)\not\equiv 0$, and we can express it as a Laurent series
\begin{equation*}
\lambda(s)=\lambda_0 s^c+\cdots,
\end{equation*}
where $\lambda_0\in\mathbb{C}^*$.
Let $I:=\{i\, ;\, z_i(s)\not\equiv 0\}$. 
By (ii), $I\in\mathcal{I}(f_{t(s)})$, and hence $I\in\mathcal{I}(f^{1}_{t(s)})\cap\cdots\cap\mathcal{I}(f^{k_0}_{t(s)})$. For each $i\in I$, consider the Taylor expansion
\begin{equation*}
z_i(s)=a_i s^{w_i}+\cdots,
\end{equation*}
where $a_i\in\mathbb{C}^*$ and $w_i\in\mathbb{N}$. 
The following is the counterpart of Claim \ref{claim1}.

\begin{claim}\label{claim1f2}
There exists $1\leq k\leq k_0$ such that $f^{k,I}_{t(0),\mathbf{w}}(\mathbf{a})=0$, where again $\mathbf{a}$ and $\mathbf{w}$ are the points in $\mathbb{C}^{*I}$ and $\mathbb{N}^{I}$, respectively, whose $i$th coordinates ($i\in I$) are $a_{i}$ and $w_{i}$ respectively.
\end{claim}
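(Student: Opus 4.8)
The plan is to mimic the proof of Claim~\ref{claim1}, replacing the single function $f$ by the family member $f_{t(s)}$ and using the uniform non-degeneracy from Lemma~\ref{Lemma2} in place of Lemma~\ref{Lemma1}. First I would reduce to the case $I=\{1,\ldots,n\}$ as before, write $o_k:=\mathrm{ord}_s f^{k}_{t(s)}(\mathbf{z}(s))$ for $1\le k\le k_0$, and argue by contradiction, assuming $f^{k}_{t(0),\mathbf{w}}(\mathbf{a})\ne 0$ for every $k$. The key point is that, since the Newton boundary $\Gamma(f^k_t)$ is independent of $t$ (Assumptions~\ref{ass-sect-usr}(1)), the face $\Delta(\mathbf{w};f^k_{t(s)})$ and hence the ``expected'' order $d(\mathbf{w};f^k_{t(s)})=d(\mathbf{w};f^k_{t(0)})$ do not depend on $s$; moreover the leading coefficient vector of $f^{k}_{t(s)}(\mathbf{z}(s))$ in $s$ is governed by the face function $f^{k}_{t(0),\mathbf{w}}$ evaluated at $\mathbf{a}$, because $t(s)\to t(0)$. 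So the hypothesis $f^{k}_{t(0),\mathbf{w}}(\mathbf{a})\ne 0$ forces $o_k=d(\mathbf{w};f^k_{t(0)})$ for all $k$.

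Next I would handle the degenerate sub-case $I(\mathbf{w})=\{1,\ldots,n\}$ exactly as in the proof of Claim~\ref{claim1}: then $d(\mathbf{w};f^{k_1}_{t(0)})=0$ for the index $k_1$ with $f^{k_1}_{t(0)}(\mathbf{z}(0))=0$ (such $k_1$ exists by (ii)), whence $0=f^{k_1}_{t(0)}(\mathbf{z}(0))=f^{k_1}_{t(0),\mathbf{w}}(\mathbf{a})$, a contradiction. So I may assume $I(\mathbf{w})\subsetneq\{1,\ldots,n\}$. Setting $e:=\sum_{k=1}^{k_0}o_k$, differentiating the product $f_{t(s)}=f^1_{t(s)}\cdots f^{k_0}_{t(s)}$ and using the order estimates, I obtain, for some nonzero $\mu_k=\prod_{\ell\ne k}f^{\ell}_{t(0),\mathbf{w}}(\mathbf{a})$,
\begin{equation*}
\frac{\partial f_{t(s)}}{\partial z_i}(\mathbf{z}(s))=\sum_{k=1}^{k_0}\mu_k\frac{\partial f^{k}_{t(0),\mathbf{w}}}{\partial z_i}(\mathbf{a})\, s^{-w_i+e}+\cdots,
\end{equation*}
and then, multiplying relation (i) by $s^{w_i}$ and comparing with $\lambda_0\bar a_i s^{c+2w_i}+\cdots$, I deduce $e\le c$ and that $S_i:=\sum_{k}\mu_k\,\partial f^{k}_{t(0),\mathbf{w}}/\partial z_i(\mathbf{a})$ vanishes for every $i\in I(\mathbf{w})^c$.

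Finally I would derive a contradiction via the Euler identity, just as in Claim~\ref{claim1}: each $f^k_{t(0),\mathbf{w}}$ is weighted homogeneous of degree $d(\mathbf{w};f^k_{t(0)})$, so
\begin{align*}
0 &= \sum_{i\in I(\mathbf{w})^c} a_i w_i\, S_i
= \sum_{k=1}^{k_0}\bigg(\prod_{\genfrac{}{}{0pt}{}{1\le\ell\le k_0}{\ell\ne k}}f^{\ell}_{t(0),\mathbf{w}}(\mathbf{a})\bigg)\, d(\mathbf{w};f^k_{t(0)})\, f^{k}_{t(0),\mathbf{w}}(\mathbf{a}) \\
&= \bigg(\prod_{\ell=1}^{k_0}f^{\ell}_{t(0),\mathbf{w}}(\mathbf{a})\bigg)\cdot\sum_{k=1}^{k_0} d(\mathbf{w};f^k_{t(0)}) \ne 0,
\end{align*}
since $d(\mathbf{w};f^{k_1}_{t(0)})\ne 0$ and all $f^{\ell}_{t(0),\mathbf{w}}(\mathbf{a})\ne 0$ by the contradiction hypothesis. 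This is the contradiction, proving the claim. I expect essentially no serious obstacle here, since the argument is a transcription of Claim~\ref{claim1}; the one point needing care is making explicit that the leading term in $s$ of $f^{k}_{t(s)}(\mathbf{z}(s))$ and of its partial derivatives is computed with the face function of $f^k$ at the \emph{limit} parameter $t(0)$ — this is legitimate precisely because $\Gamma(f^k_t)$, and therefore $\Delta(\mathbf{w};f^k_t)$, is constant in $t$, so no cancellation of the top-degree part can occur as $s\to 0$.
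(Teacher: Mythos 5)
Your proof is correct and follows essentially the same line as the paper: reduce to $I=\{1,\ldots,n\}$, argue by contradiction assuming all $f^k_{t(0),\mathbf w}(\mathbf a)\ne 0$, use the constancy of $\Gamma(f^k_t)$ to identify $\mathrm{ord}_s\, f^k_{t(s)}(\mathbf z(s))$ with $d(\mathbf w;f^k_{t(0)})$ and the leading coefficient with $f^k_{t(0),\mathbf w}(\mathbf a)$, dispose of the case $I(\mathbf w)=\{1,\ldots,n\}$ directly, and then reach a contradiction via the Euler identity. One small remark: your opening sentence announces that Lemma~\ref{Lemma2} replaces Lemma~\ref{Lemma1} here, but in fact neither lemma is invoked in the proof of this claim (in yours or in the paper's); the argument is self-contained and rests only on the Newton-boundary constancy and the Euler identity. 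That mismatch between the advertised plan and the executed argument is cosmetic and does not affect correctness.
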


Again, we shall prove this claim later. First, we complete the proof of the theorem. Once more, hereafter, to simplify the notation, we shall assume that $I=\{1,\ldots,n\}$, so that the function $f_t^{k,I}$ is simply written as $f_t^k$, the intersection $I\cap I(\mathbf{w})$ is written as $I(\mathbf{w})$ (where, as in Lemma \ref{Lemma2}, $I(\mathbf{w})$ is the set of all indexes $i\in \{1,\ldots,n\}$ for which $w_i=0$), and so on. 

Look at the set consisting of all integers $k$ for which $f^{k}_{t(0),\mathbf{w}}(\mathbf{a})=0$. By Claim \ref{claim1f2}, this set is not empty. As in the proof of Theorem \ref{fmt}, we assume that $f^{k}_{t(0),\mathbf{w}}(\mathbf{a})$ vanishes for $1\leq k\leq k_0'$ and does not vanish for $k'_0+1\leq k\leq k_0$, and we write $f=f^1\cdots f^{k_0'}\cdot h$ where $h:=f^{k_0'+1}\cdots f^{k_0}$ if $k_0'\leq k_0-1$ and $h:=1$ if $k_0'=k_0$; finally, for each $1\leq k\leq k'_0$, we put
\begin{equation*}
e_k:=d\big(\mathbf{w};f_{t(0)}^{k}\big)-\mbox{ord}\, f_{t(s)}^{k}(\mathbf{z}(s))+\sum_{\ell=1}^{k_0'} \mbox{ord}\, f_{t(s)}^{\ell}(\mathbf{z}(s))
\end{equation*}
(where, as usual, $\mbox{ord} f_{t(s)}^{\ell}(\mathbf{z}(s))$ means the order, in $s$, of the expression $f_{t(s)}^{\ell}(\mathbf{z}(s))\equiv f^{\ell}(t(s),\mathbf{z}(s))$), 
and we suppose that 
\begin{equation*}
e_{\mbox{\tiny min}}:=e_1=\cdots=e_{k_0''}<e_{k_0''+1}\leq \cdots\leq e_{k_0'}.
\end{equation*}
Note that the equality $\Gamma_+\big(f^{k}_{t(s)}\big)=\Gamma_+\big(f^{k}_{t(0)}\big)$ implies $\Delta\big(\mathbf{w};f^{k}_{t(s)}\big)=\Delta\big(\mathbf{w};f^{k}_{t(0)}\big)$ and $d\big(\mathbf{w};f^{k}_{t(s)}\big)=d\big(\mathbf{w};f^{k}_{t(0)}\big)=d(\hat{\mathbf{w}};f^k)$ for all $s$, where $\hat{\mathbf{w}}=(w_0,\mathbf{w})$ with $w_0$ defined by the Taylor expansion $t(s):=t_0s^{w_0}+\cdots$, $t_0\not=0$.
Still as in the proof of Theorem \ref{fmt} (see \eqref{e1}--\eqref{ibciol}), it follows from the relation (i) that there exist non-zero complex numbers $\mu_1,\ldots,\mu_{k''_0}$ such that for any $1\leq i\leq n$:
\begin{equation*}
\sum_{k=1}^{k''_0}\frac{\partial {f^{k}_{t(0),\mathbf{w}}}}{\partial z_{i}} (\mathbf{a})\cdot \mu_k\cdot s^{d(\hat{\mathbf{w}};h)+e_{\mbox{\tiny min}}}+\cdots = \lambda_0 \bar a_{i}s^{c+2w_{i}}+\cdots,
\end{equation*}
and since $\lambda_0 \bar a_{i}\not=0$ and $I(\mathbf{w})\not=\emptyset$ (by (iii)), by the same argument as the one given after \eqref{ibciol}, we deduce that the sum 
\begin{equation*}
S_i:=\sum_{k=1}^{k''_0} \mu_k 
\frac{\partial {f^{k}_{t(0),\mathbf{w}}}}{\partial z_{i}} (\mathbf{a})
\end{equation*}
vanishes for all $i\notin I(\mathbf{w})$. If it also vanishes for all $i\in I(\mathbf{w})$, then we get a contradiction with Lemma \ref{Lemma2} because $\mathbf{z}(s)\in\mathring{B}_{\varepsilon_0}$, and hence,
\begin{equation}\label{pthm45-di}
\sum_{i\in I(\mathbf{w})}\vert a_i \vert^2=
\Vert \mathbf{z}(0)\Vert^2\leq \varepsilon_0^2.
\end{equation} 
If there is an index $i_0\in I(\mathbf{w})$ such that $S_{i_0}\not=0$, then 
\begin{equation*}
S_i=\sum_{k=1}^{k''_0} \mu_k 
\frac{\partial {f^{k}_{t(0),\mathbf{w}}}}{\partial z_{i}} (\mathbf{a})
=\left\{
\begin{aligned}
& \lambda_0 \bar a_i &&\mbox{for}&& i\in I(\mathbf{w}),\\
& 0 &&\mbox{for}&& i\notin I(\mathbf{w}),
\end{aligned}
\right.
\end{equation*}
and still by \eqref{pthm45-di}, we get a new contradiction with Lemma \ref{Lemma2}.

To complete the proof of Theorem \ref{smt}, it remains to prove Claims \ref{lafpsmt} and \ref{claim1f2}. We start with the proof of  Claim \ref{claim1f2}.

\begin{proof}[Proof of Claim \ref{claim1f2}]
It is similar to the proof of Claim \ref{claim1}. Again, we assume $I=\{1,\ldots,n\}$, so that $f^{k,I}_{t(0),\mathbf{w}}=f^{k}_{t(0),\mathbf{w}}$. We argue by contradiction. Suppose that $f^{k}_{t(0),\mathbf{w}}(\mathbf{a})\not=0$ for all $1\leq k\leq k_0$. Then $f^k_{\hat{\mathbf{w}}}(t_0,\mathbf{a})=f^k_{t(0),\mathbf{w}}(\mathbf{a})\not=0$ and $d\big(\mathbf{w};f_{t(0)}^{k}\big)=d(\hat{\mathbf{w}};f^k)=\mbox{ord}\, f_{t(s)}^{k}(\mathbf{z}(s))$ for all $1\leq k\leq k_0$ (where $\hat{\mathbf{w}}$ and $t_0$ are defined as above), and by (ii), there exists $1\leq k_1\leq k_0$ such that $f_{t(0)}^{k_1}(\mathbf{z}(0))=0$. If $I(\mathbf{w})=\{1,\ldots,n\}$, then $d\big(\mathbf{w};f_{t(0)}^{k_1}\big)=0$ and
\begin{equation*}
f_{t(s)}^{k_1}(\mathbf{z}(s))=f^{k_1}_{t(0),\mathbf{w}}(\mathbf{a})\, s^0+\cdots, 
\end{equation*}
so that $0=f_{t(0)}^{k_1}(\mathbf{z}(0))=f^{k_1}_{t(0),\mathbf{w}}(\mathbf{a})$, which is a contradiction. If $I(\mathbf{w})$ is a proper subset of $\{1,\ldots,n\}$ and $d\big(\mathbf{w};f_{t(0)}^{k_1}\big)\not=0$, then, exactly as in the proof of Claim \ref{claim1}, if $e:=\sum_{k=1}^{k_0} \mbox{ord}\, f_{t(s)}^{k}(\mathbf{z}(s))$, then for any $1\leq i\leq n$:
\begin{equation}\label{egalite982021}
\sum_{k=1}^{k_0} \bigg(\prod_{\genfrac{}{}{0pt}{}{1\leq \ell\leq k_0}{\ell\not=k}}f^{\ell}_{t(0),\mathbf{w}}(\mathbf{a})\bigg)\cdot\frac{\partial {f^{k}_{t(0),\mathbf{w}}}}{\partial z_{i}} (\mathbf{a})\cdot s^{e}+\cdots = \lambda_0 \bar a_{i}s^{c+2w_{i}}+\cdots.
\end{equation}
As above, since $\lambda_0 \bar a_{i}\not=0$ and $I(\mathbf{w})\not=\emptyset$, this implies that the sum 
\begin{equation*}
\sum_{k=1}^{k_0} \bigg(\prod_{\genfrac{}{}{0pt}{}{1\leq \ell\leq k_0}{\ell\not=k}}f^{\ell}_{t(0),\mathbf{w}}(\mathbf{a})\bigg)\cdot\frac{\partial {f^{k}_{t(0),\mathbf{w}}}}{\partial z_{i}} (\mathbf{a})
\end{equation*}
vanishes for all $i\notin I(\mathbf{w})$, and using the Euler identity, we get exactly the same contradiction as in the proof of Claim \ref{claim1}.
\end{proof}

Now we prove Claim \ref{lafpsmt}.

\begin{proof}[Proof of Claim \ref{lafpsmt}]
The argument is very similar to that given in the proof of Theorem \ref{smt}.
 We argue by contradiction. If the assertion in the claim is false, then, by the Curve Selection Lemma, there exists a real analytic curve $(t(s),\mathbf{z}(s))=(t(s),z_1(s),\ldots,z_n(s))$ in $D_\tau\times \mathring{B}_{\varepsilon_0}$, $0\leq s\leq 1$,  such that the following two condition holds:
\begin{enumerate}
\item[(i)]
$\frac{\partial f_{t(s)}}{\partial z_i}(\mathbf{z}(s)) = 0$ for $1\leq i\leq n$;
\item[(ii)]
$f_{t(0)}(\mathbf{z}(0))=0$ but $f_{t(s)}(\mathbf{z}(s))\not=0$ for $s\not=0$;
\end{enumerate}
Let $I:=\{i\, ;\, z_i(s)\not\equiv 0\}$. 
By (ii), $I\in\mathcal{I}(f^{1}_{t(s)})\cap\cdots\cap\mathcal{I}(f^{k_0}_{t(s)})$. For each $i\in I$, consider the Taylor expansion
\begin{equation*}
z_i(s)=a_i s^{w_i}+\cdots,
\end{equation*}
where $a_i\in\mathbb{C}^*$ and $w_i\in\mathbb{N}$. 

\begin{claim}\label{claim1f}
There exists $1\leq k\leq k_0$ such that $f^{k,I}_{t(0),\mathbf{w}}(\mathbf{a})=0$, where again $\mathbf{a}$ and $\mathbf{w}$ are the points in $\mathbb{C}^{*I}$ and $\mathbb{N}^{I}$, respectively, whose $i$th coordinates ($i\in I$) are $a_{i}$ and $w_{i}$ respectively.
\end{claim}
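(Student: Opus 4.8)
The plan is to reduce Claim \ref{claim1f} to the already-established Claim \ref{claim1}. Recall that Claim \ref{claim1f} lives inside the proof of Claim \ref{lafpsmt}, where we are given a real analytic curve $(t(s),\mathbf{z}(s))$ with $\frac{\partial f_{t(s)}}{\partial z_i}(\mathbf{z}(s))\equiv 0$ for all $i$ and with $f_{t(0)}(\mathbf{z}(0))=0$ but $f_{t(s)}(\mathbf{z}(s))\not\equiv 0$. The point is that the curve $\mathbf{z}(s)$, together with the \emph{single} function $\tilde f(\mathbf{z}):=f_{t(0)}(\mathbf{z})$, almost fits the setting of Claim \ref{claim1}; but $f_{t(s)}$ genuinely varies with $s$, so some care is needed. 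The cleanest route is to argue directly, mimicking the proof of Claim \ref{claim1}, using the hypothesis $\Gamma_+(f^k_{t(s)})=\Gamma_+(f^k_{t(0)})$ from Assumptions \ref{ass-sect-usr}(1) to control orders in $s$.

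Concretely, first I would set $I=\{1,\dots,n\}$ as usual and argue by contradiction, assuming $f^k_{t(0),\mathbf{w}}(\mathbf{a})\neq 0$ for all $1\le k\le k_0$. Write $\hat{\mathbf{w}}=(w_0,\mathbf{w})$ where $t(s)=t_0 s^{w_0}+\cdots$ with $t_0\neq 0$; the equality of Newton polyhedra gives $\Delta(\mathbf{w};f^k_{t(s)})=\Delta(\mathbf{w};f^k_{t(0)})$ and $d(\mathbf{w};f^k_{t(s)})=d(\hat{\mathbf{w}};f^k)$ for all $s$, and the assumption $f^k_{t(0),\mathbf{w}}(\mathbf{a})\neq 0$ forces $\mathrm{ord}\, f^k_{t(s)}(\mathbf{z}(s))=d(\hat{\mathbf{w}};f^k)$. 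If $I(\mathbf{w})=\{1,\dots,n\}$ then $d(\mathbf{w};f^{k_1}_{t(0)})=0$ for the index $k_1$ with $f^{k_1}_{t(0)}(\mathbf{z}(0))=0$ (such $k_1$ exists by (ii)), and evaluating the leading term of $f^{k_1}_{t(s)}(\mathbf{z}(s))$ at $s=0$ gives $f^{k_1}_{t(0),\mathbf{w}}(\mathbf{a})=0$, a contradiction. Otherwise $I(\mathbf{w})$ is a proper subset and $d(\mathbf{w};f^{k_1}_{t(0)})>0$; then multiplying the relation $\frac{\partial f_{t(s)}}{\partial z_i}(\mathbf{z}(s))=0$ by $s^{w_i}$ and extracting leading coefficients yields, for each $i$,
\begin{equation*}
\sum_{k=1}^{k_0}\Bigl(\prod_{\genfrac{}{}{0pt}{}{1\le \ell\le k_0}{\ell\neq k}} f^{\ell}_{t(0),\mathbf{w}}(\mathbf{a})\Bigr)\frac{\partial f^{k}_{t(0),\mathbf{w}}}{\partial z_i}(\mathbf{a})=0,
\end{equation*}
since the right-hand side (the derivative, which is $0$) contributes nothing. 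Summing $v_i$ (here the weights $w_i$) times $a_i$ over $i\notin I(\mathbf{w})$ and applying the Euler identity for the weighted homogeneous $f^k_{t(0),\mathbf{w}}$ gives $\bigl(\prod_\ell f^{\ell}_{t(0),\mathbf{w}}(\mathbf{a})\bigr)\cdot\sum_k d(\hat{\mathbf{w}};f^k)\neq 0$, contradicting the displayed vanishing — exactly as at the end of the proof of Claim \ref{claim1}.

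The only subtlety — and the step I expect to require the most attention — is bookkeeping the orders in $s$: one must check that the leading term of $\frac{\partial f_{t(s)}}{\partial z_i}(\mathbf{z}(s))$ really is governed by $\sum_k\bigl(\prod_{\ell\neq k} f^{\ell}_{t(0),\mathbf{w}}(\mathbf{a})\bigr)\frac{\partial f^{k}_{t(0),\mathbf{w}}}{\partial z_i}(\mathbf{a})\, s^{(\sum_k d(\hat{\mathbf{w}};f^k))-w_i}$, i.e.\ that no cancellation of orders occurs and that the product structure of $f=f^1\cdots f^{k_0}$ behaves as in \eqref{e1}. This is handled precisely because $\mathrm{ord}\, f^k_{t(s)}(\mathbf{z}(s))=d(\hat{\mathbf{w}};f^k)$ for every $k$ under our contradiction hypothesis, so all factors have minimal possible order and the leading coefficients are the face-function values $f^k_{t(0),\mathbf{w}}(\mathbf{a})$, which are assumed nonzero. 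Once this is in place the argument is formally identical to Claim \ref{claim1}, and I would simply refer the reader there for the remaining details rather than repeat them.
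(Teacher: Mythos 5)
Your argument is correct and is essentially the paper's own proof: both proceed by contradiction (assuming all $f^{k}_{t(0),\mathbf{w}}(\mathbf{a})\neq 0$), use $\Gamma_+(f^k_{t(s)})=\Gamma_+(f^k_{t(0)})$ to identify $\mathrm{ord}\,f^k_{t(s)}(\mathbf{z}(s))=d(\mathbf{w};f^k_{t(0)})$, extract the leading coefficient of $\partial f_{t(s)}/\partial z_i(\mathbf{z}(s))=0$ to obtain $S_i=0$ for all $i$, and invoke the Euler identity exactly as in Claim \ref{claim1}. The paper compresses this by referring to Claim \ref{claim1f2} and merely noting that the right-hand side of \eqref{egalite982021} is now zero; you spell out the same computation, which is fine.
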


The proof of Claim \ref{claim1f} is completely similar to that of Claim \ref{claim1f2}. The only difference is that the right-hand side of the equality \eqref{egalite982021} is now zero. However this does not change anything in the argument.

Once more, we assume $I=\{1,\ldots,n\}$, so that $f_{t(0),\mathbf{w}}^{k,I}=f_{t(0),\mathbf{w}}^k$, and we
look at the set consisting of all integers $k$ for which $f^{k}_{t(0),\mathbf{w}}(\mathbf{a})=0$. By Claim \ref{claim1f}, this set is not empty. As in the proofs of Theorems \ref{fmt} or \ref{smt}, we assume that $f^{k}_{t(0),\mathbf{w}}(\mathbf{a})$ vanishes for $1\leq k\leq k_0'$ and does not vanish for $k'_0+1\leq k\leq k_0$, and we write $f=f^1\cdots f^{k_0'}\cdot h$ where $h:=f^{k_0'+1}\cdots f^{k_0}$ if $k_0'\leq k_0-1$ and $h:=1$ if $k_0'=k_0$; finally, for each $1\leq k\leq k'_0$, we put
\begin{equation*}
e_k:=d\big(\mathbf{w};f_{t(0)}^{k}\big)-\mbox{ord}\, f_{t(s)}^{k}(\mathbf{z}(s))+\sum_{\ell=1}^{k_0'} \mbox{ord}\, f_{t(s)}^{\ell}(\mathbf{z}(s)),
\end{equation*} 
and we suppose that 
\begin{equation*}
e_{\mbox{\tiny min}}:=e_1=\cdots=e_{k_0''}<e_{k_0''+1}\leq \cdots\leq e_{k_0'}.
\end{equation*}
Still as in the proofs of Theorems \ref{fmt} or \ref{smt}, it follows from the relation (i) that there exist non-zero complex numbers $\mu_1,\ldots,\mu_{k''_0}$ such that for any $1\leq i\leq n$:
\begin{equation*}
\sum_{k=1}^{k''_0}\frac{\partial {f^{k}_{t(0),\mathbf{w}}}}{\partial z_{i}} (\mathbf{a})\cdot \mu_k\cdot s^{d(\hat{\mathbf{w}};h)+e_{\mbox{\tiny min}}}+\cdots = 0,
\end{equation*}
and hence  
$
\sum_{k=1}^{k''_0} \mu_k 
\frac{\partial {f^{k}_{t(0),\mathbf{w}}}}{\partial z_{i}} (\mathbf{a})=0.
$
In other words, the vectors 
\begin{equation*}
\bigg(\frac{\partial {f^{1}_{t(0),\mathbf{w}}}}{\partial z_{1}} (\mathbf{a}),\ldots,
\frac{\partial {f^{1}_{t(0),\mathbf{w}}}}{\partial z_{n}} (\mathbf{a})\bigg)
,\ldots, 
\bigg(\frac{\partial {f^{k_0''}_{t(0),\mathbf{w}}}}{\partial z_{1}} (\mathbf{a}),\ldots,
\frac{\partial {f^{k_0''}_{t(0),\mathbf{w}}}}{\partial z_{n}} (\mathbf{a})\bigg)
\end{equation*}
of $\mathbb{C}^n$ are linearly dependent, that is, 
\begin{equation*}
df^{1}_{t(0),\mathbf{w}}(\mathbf{a})\wedge\cdots\wedge
df^{k_0''}_{t(0),\mathbf{w}}(\mathbf{a})=0,
\end{equation*}
which contradicts the non-degeneracy of $V\big(f^{1}_{t(0)},\ldots,f^{k_0''}_{t(0)}\big)$ if $I(\mathbf{w})=\emptyset$. In the case where $I(\mathbf{w})\not=\emptyset$, we cannot proceed like that. However, in this case, Lemma \ref{Lemma2} (applied with $\lambda=0$) implies 
\begin{equation*}
\mathbf{a}\notin\bigg\{\mathbf{z}\in \mathbb{C}^{*n}\, ;\, \sum_{i\in I(\mathbf{w})}|z_i|^2\leq \varepsilon_0^2\bigg\},
\end{equation*} 
and since $\mathbf{z}(s)\in\mathring{B}_{\varepsilon_0}$, we also have
\begin{equation*}
\sum_{i\in I(\mathbf{w})}|a_i|^2\leq\sum_{i=1}^n |z_i(0)|^2=\Vert\mathbf{z}(0)\Vert^2<\varepsilon_0^2,
\end{equation*}
which is a contradiction. 
\end{proof}

\subsection{Proof of Lemma \ref{Lemma2}}\label{subsect-pol2}
If the assertion of this lemma fails for some $k_1,\dots,k_m$, $I$ and $\Delta(\mathbf{w};f_0^{k_1}),\ldots,\Delta(\mathbf{w};f_0^{k_m})$ such that $I\cap I(\mathbf{w})=\emptyset$, then, as in the proof of Lemma \ref{Lemma1}, we get a contradiction with the non-degeneracy condition (see Assumptions \ref{ass-sect-usr} and Remark \ref{rem-rI}). 

Now, assume that the assertion fails for some $k_1,\dots,k_m$, $I$ and $\Delta(\mathbf{w};f_0^{k_1}),\ldots,\Delta(\mathbf{w};f_0^{k_m})$ such that $I\cap I(\mathbf{w})\not=\emptyset$. 
Again, without loss of generality, and in order to simplify the notation, we assume that $I=\{1,\ldots,n\}$, so that $f^{k,I}_{t,\mathbf{w}}=f^{k}_{t,\mathbf{w}}$, $I\cap I(\mathbf{w})=I(\mathbf{w})$, $\mathbb{C}^{*I}=\mathbb{C}^{*n}$, etc.
Then there exist sequences $\{\mathbf{p}_q\}_{q\in\mathbb{N}}$, $\{\lambda_q\}_{q\in\mathbb{N}}$ and $\{t_q\}_{q\in\mathbb{N}}$ of points in $\mathbb{C}^{*n}$, $\mathbb{C}$ and $\mathbb{C}^*$, respectively, such that:
\begin{enumerate}
\item
$f_{t_q,\mathbf{w}}^{k_1}(\mathbf{p}_q)=\cdots=f_{t_q,\mathbf{w}}^{k_m}(\mathbf{p}_q)=0$ for all $q\in\mathbb{N}$;
\item
there exists a sequence $\{(\mu_{k_1,q},\ldots,\mu_{k_m,q})\}_{q\in\mathbb{N}}$ of points in $\mathbb{C}^m\setminus\{\mathbf{0}\}$ such that for all $q\in\mathbb{N}$ and all $1\leq i\leq n$:
\begin{equation*}
\sum_{j=1}^m \mu_{k_j,q}\, \frac{\partial f_{t_q,\mathbf{w}}^{k_j}}{\partial z_i}(\mathbf{p}_q)=\left\{
\begin{aligned}
& \lambda_q \, \bar p_{q,i} &&\mbox{if} && i\in I(\mathbf{w}),\\
& 0 &&\mbox{if} && i\notin I(\mathbf{w}),
\end{aligned}
\right.
\end{equation*}
where, for each $1\leq i\leq n$, $\bar p_{q,i}$ denotes the conjugate of the $i$th coordinate $p_{q,i}$  of~$\mathbf{p}_q$;
\item 
$\sum_{i\in I(\mathbf{w})} |p_{q,i}|^2\leq \varepsilon_0^2$ and $t_q\to 0$ as $q\to\infty$.
\end{enumerate}
(Again, $f_{t_q,\mathbf{w}}^{k_j}$ denotes the face function $(f_{t_q}^{k_j})_\mathbf{w}\equiv (f_{t_q}^{k_j})_{\Delta\big(\mathbf{w};f_{t_q}^{k_j}\big)}$ of $f_{t_q}^{k_j}$ with respect to $\mathbf{w}$.)
By an argument similar to that used in the proof of Lemma \ref{Lemma1}, we can assume that the sequences $\{p_{q,i}\}_{q\in\mathbb{N}}$ converge to $0$ for all $i\notin I(\mathbf{w})$, so that, once again, we can apply the Curve Selection Lemma to get a real analytic curve 
$
(t(s),\mathbf{a}(s))=(t(s),a_1(s),\ldots,a_n(s))
$
in $\mathbb{C}\times\mathbb{C}^{n}$, $0\leq s\leq 1$, and a family of complex numbers $\lambda(s)$, $0<s\leq 1$, such that:
\begin{enumerate}
\item[($1'$)]
$f_{t(s),\mathbf{w}}^{k_1}(\mathbf{a}(s))=\cdots=f_{t(s),\mathbf{w}}^{k_m}(\mathbf{a}(s))=0$ for all $s\not=0$;
\item[($2'$)]
there exists a real analytic curve $(\mu_{k_1}(s),\ldots,\mu_{k_m}(s))$ in $\mathbb{C}^m\setminus\{\mathbf{0}\}$, $0< s\leq 1$, such that for all $s\not=0$ and all $1\leq i\leq n$:
\begin{equation*}
\sum_{j=1}^m \mu_{k_j}(s)\, \frac{\partial f_{t(s),\mathbf{w}}^{k_j}}{\partial z_i}(\mathbf{a}(s))=\left\{
\begin{aligned}
& \lambda(s)\, \bar a_{i}(s) &&\mbox{if} &&i\in I(\mathbf{w}),\\
& 0 &&\mbox{if} &&i\notin I(\mathbf{w});
\end{aligned}
\right.
\end{equation*}
\item[($3'$)]
$\sum_{i\in I(\mathbf{w})} |a_i(s)|^2\leq \varepsilon_0^2$, $t(0)=0$, $a_i(0)=0$ for $i\notin I(\mathbf{w})$, and $\mathbf{a}(s)\in \mathbb{C}^{*n}$ for $s\not=0$.
\end{enumerate}

For each $1\leq i\leq n$, consider the Taylor expansion
\begin{equation*}
a_i(s)=b_i s^{v_i}+\cdots,
\end{equation*}
where $b_i\in\mathbb{C}^*$ and $v_i\in\mathbb{N}$, and put $v_{\mbox{\tiny min}}:=\mbox{min}\{v_1,\ldots,v_n\}$. Then we divide the proof into two cases depending on whether $v_{\mbox{\tiny min}}=0$ or $v_{\mbox{\tiny min}}>0$.
Let us first assume $v_{\mbox{\tiny min}}>0$. In this case, the proof is similar to that of Lemma \ref{Lemma1}. Indeed, exactly as in this proof, for each $1\leq j\leq m$ the face $\Delta\big(\mathbf{v};f_{0,\mathbf{w}}^{k_j}\big)$ is a (compact) face of $\Gamma\big(f_{0}^{k_j}\big)$ and $d\big(\mathbf{v};f_{0,\mathbf{w}}^{k_j}\big)>0$. Since $\Gamma_+\big(f^{k_j}_t\big)$\textemdash and hence $\Delta\big(\mathbf{w};f^{k_j}_t\big)$\textemdash is independent of $t$, we have
\begin{equation*}
0=f_{t(s),\mathbf{w}}^{k_j}(\mathbf{a}(s))=f_{0,\mathbf{w},\mathbf{v}}^{k_j}(\mathbf{b})\cdot s^{d\big(\mathbf{v};f_{0,\mathbf{w}}^{k_j}\big)} + \cdots
\end{equation*}
for all $s\not=0$,
and hence $f_{0,\mathbf{w},\mathbf{v}}^{k_j}(\mathbf{b})=0$, where $\mathbf{v}$ and $\mathbf{b}$ are the points of $\mathbb{N}^{*n}$ and $\mathbb{C}^{*n}$, respectively, whose $i$th coordinates are $v_i$ and $b_i$ respectively. Here, according to our notation, by $f_{0,\mathbf{w},\mathbf{v}}^{k_j}$ we mean the face function $\big(\big(f_{0}^{k_j}\big)_{\mathbf{w}}\big)_{\mathbf{v}}$ of $f_{0,\mathbf{w}}^{k_j}\equiv\big(f_{0}^{k_j}\big)_{\mathbf{w}}$ with respect to $\mathbf{v}$.

Write $\mu_{k_j}(s)=\mu_{k_j} s^{g_j}+\cdots$, where $\mu_{k_j}\not=0$. Again, if $\mu_{k_j}(s)\equiv 0$, then $g_j=\infty$. Put 
\begin{align*}
\delta:=\mbox{min}\big\{d\big(\mathbf{v};f_{0,\mathbf{w}}^{k_1}\big)+g_1,\ldots,d\big(\mathbf{v};f_{0,\mathbf{w}}^{k_m}\big)+g_m\big\},
\end{align*}
 and define $\tilde\mu_{k_j}$ to be equal to $\mu_{k_j}$ or $0$ depending on whether $d\big(\mathbf{v};f_{0,\mathbf{w}}^{k_j}\big)+g_j$ is equal to $\delta$ or not respectively.

\begin{claim}\label{claim2} 
There exists $i_0\in I(\mathbf{w})$ such that 
$
\sum_{j=1}^m \tilde\mu_{k_j} \frac{\partial f_{0,\mathbf{w},\mathbf{v}}^{k_j}}{\partial z_{i_0}}(\mathbf{b})\not=0.
$
\end{claim}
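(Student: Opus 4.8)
The plan is to mimic the proof of Claim \ref{claim20210426} almost verbatim, the only genuinely new point being the bookkeeping for the parameter $t$. First I would record the relevant Taylor expansions along the curve $(t(s),\mathbf{a}(s))$. By condition (1) of Assumptions \ref{ass-sect-usr} the Newton polyhedron $\Gamma_+\big(f^{k_j}_t\big)$, hence the face $\Delta\big(\mathbf{w};f^{k_j}_t\big)$ and the number $d\big(\mathbf{w};f^{k_j}_t\big)$, is independent of $t$; thus $f^{k_j}_{t,\mathbf{w}}$ is a polynomial whose set of monomials does not depend on $t$ and whose coefficients are functions of $t$. Substituting $t=t(s)$ (with $t(0)=0$ by $(3')$) and $a_i(s)=b_i s^{v_i}+\cdots$, and using that $v_{\mbox{\tiny min}}>0$ so that $\mathbf{v}\in\mathbb{N}^{*n}$, I obtain, exactly as in the proof of Claim \ref{claim20210426}, for all $1\le j\le m$ and all $1\le i\le n$,
\[
\frac{\partial f^{k_j}_{t(s),\mathbf{w}}}{\partial z_i}(\mathbf{a}(s)) = \frac{\partial f^{k_j}_{0,\mathbf{w},\mathbf{v}}}{\partial z_i}(\mathbf{b})\, s^{d\big(\mathbf{v};f^{k_j}_{0,\mathbf{w}}\big)-v_i}+\cdots,
\]
the leading coefficient being the one at $t=0$ simply because each coefficient $c_\alpha(t(s))$ of $f^{k_j}_{t(s),\mathbf{w}}$ has leading term $c_\alpha(0)$, which is the corresponding coefficient of $f^{k_j}_{0,\mathbf{w}}$.

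I would then argue by contradiction. Suppose $S_i:=\sum_{j=1}^m\tilde\mu_{k_j}\frac{\partial f^{k_j}_{0,\mathbf{w},\mathbf{v}}}{\partial z_i}(\mathbf{b})$ vanishes for every $i\in I(\mathbf{w})$. Writing $\mu_{k_j}(s)=\mu_{k_j}s^{g_j}+\cdots$ and using the definitions of $\delta$ and $\tilde\mu_{k_j}$, the order in $s$ of $\mu_{k_j}(s)\frac{\partial f^{k_j}_{t(s),\mathbf{w}}}{\partial z_i}(\mathbf{a}(s))$ is at least $\delta-v_i$, and the coefficient of $s^{\delta-v_i}$ in $\sum_{j=1}^m\mu_{k_j}(s)\frac{\partial f^{k_j}_{t(s),\mathbf{w}}}{\partial z_i}(\mathbf{a}(s))$ is precisely $S_i$. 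For $i\notin I(\mathbf{w})$ this sum vanishes identically by $(2')$, so $S_i=0$ there as well; together with the hypothesis this gives $S_i=0$ for all $1\le i\le n$. Let $k_{j_1},\dots,k_{j_p}$ be the indices among $k_1,\dots,k_m$ with $d\big(\mathbf{v};f^{k_{j_\ell}}_{0,\mathbf{w}}\big)+g_{j_\ell}=\delta$; this set is non-empty (some $\mu_{k_j}(s)\not\equiv0$, since $(\mu_{k_1}(s),\dots,\mu_{k_m}(s))\in\mathbb{C}^m\setminus\{\mathbf{0}\}$) and for these $\ell$ we have $\tilde\mu_{k_{j_\ell}}=\mu_{k_{j_\ell}}\ne0$. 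The relation $\sum_{\ell=1}^p\mu_{k_{j_\ell}}\frac{\partial f^{k_{j_\ell}}_{0,\mathbf{w},\mathbf{v}}}{\partial z_i}(\mathbf{b})=0$ for all $i$ then shows that the gradient vectors are linearly dependent, i.e.
\[
df^{k_{j_1}}_{0,\mathbf{w},\mathbf{v}}(\mathbf{b})\wedge\cdots\wedge df^{k_{j_p}}_{0,\mathbf{w},\mathbf{v}}(\mathbf{b})=0 .
\]

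Finally I would invoke the non-degeneracy hypothesis at $t=0$ to reach a contradiction. As in the proof of Claim \ref{claim20210426}, for any sufficiently large $\nu\in\mathbb{N}$ one has $f^{k_{j_\ell}}_{0,\mathbf{w},\mathbf{v}}=\big(f^{k_{j_\ell}}_0\big)_{\mathbf{v}+\nu\mathbf{w}}$, and $\mathbf{v}+\nu\mathbf{w}\in\mathbb{N}^{*n}$ since $\mathbf{v}\in\mathbb{N}^{*n}$. Because moreover $f^{k_{j_\ell}}_{0,\mathbf{w},\mathbf{v}}(\mathbf{b})=0$ for all $\ell$ (already observed above) and $\mathbf{b}\in\mathbb{C}^{*n}$, the point $\mathbf{b}$ lies in $V^*\big(\big(f^{k_{j_1}}_0\big)_{\mathbf{v}+\nu\mathbf{w}},\dots,\big(f^{k_{j_p}}_0\big)_{\mathbf{v}+\nu\mathbf{w}}\big)$, so the vanishing of the wedge above contradicts the fact that $V\big(f^{k_{j_1}}_0,\dots,f^{k_{j_p}}_0\big)$ is a non-degenerate complete intersection variety (condition (2) of Assumptions \ref{ass-sect-usr} with $t=0$). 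The only step requiring any care is the first one — making sure that specialising $t$ to the curve $t(s)$ with $t(0)=0$ really produces the $t=0$ face function $f^{k_j}_{0,\mathbf{w},\mathbf{v}}$ and leaves the orders $d\big(\mathbf{v};f^{k_j}_{0,\mathbf{w}}\big)$ unchanged; but this is immediate from the $t$-independence of the Newton boundaries, and the rest is the verbatim complex-torus argument already used for Lemma \ref{Lemma1}.
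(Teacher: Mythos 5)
Your proof is correct and follows essentially the same route as the paper's: you derive the expansion $\frac{\partial f^{k_j}_{t(s),\mathbf{w}}}{\partial z_i}(\mathbf{a}(s)) = \frac{\partial f^{k_j}_{0,\mathbf{w},\mathbf{v}}}{\partial z_i}(\mathbf{b})\, s^{d(\mathbf{v};f^{k_j}_{0,\mathbf{w}})-v_i}+\cdots$ from the $t$-independence of the Newton boundary, argue by contradiction to get $S_i=0$ for all $i$ (using $(2')$ to extend from $i\in I(\mathbf{w})$ to all $i$), conclude the wedge vanishes, and contradict the non-degeneracy of $V(f^{k_{j_1}}_0,\ldots,f^{k_{j_p}}_0)$ via the weight $\mathbf{v}+\nu\mathbf{w}\in\mathbb{N}^{*n}$. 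The only stylistic caveat is that you say $c_\alpha(t(s))$ has ``leading term $c_\alpha(0)$'' when you really mean constant term (which is what matters and may vanish); the argument is unaffected.
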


\begin{proof}
It is along the same lines as the proof of Claim \ref{claim20210426}.
More precisely, since $\Gamma_+\big(f^{k_j}_t\big)$ is independent of $t$, we have
\begin{equation*}
\frac{\partial f_{t(s),\mathbf{w}}^{k_j}}{\partial z_i}(\mathbf{a}(s)) =
\frac{\partial f_{0,\mathbf{w},\mathbf{v}}^{k_j}}{\partial z_i}(\mathbf{b})\, s^{d\big(\mathbf{v};f_{0,\mathbf{w}}^{k_j}\big)-v_i}+\cdots
\end{equation*}
for all $1\leq j\leq m$ and all $1\leq i\leq n$.
Thus, if the assertion in Claim \ref{claim2} fails, then the sum 
\begin{equation*}
\sum_{j=1}^m \tilde\mu_{k_j} \frac{\partial f_{0,\mathbf{w},\mathbf{v}}^{k_j}}{\partial z_{i}}(\mathbf{b})
\end{equation*}
vanishes for all $i\in I(\mathbf{w})$, and so, by ($2'$), it vanishes for all $1\leq i\leq n$. As in the proof of Claim \ref{claim20210426}, this implies that
\begin{equation*}
df_{0,\mathbf{w},\mathbf{v}}^{k_{j_1}}(\mathbf{b})\wedge\cdots\wedge df_{0,\mathbf{w},\mathbf{v}}^{k_{j_p}}(\mathbf{b})=0,
\end{equation*}
where the $k_{j_\ell}$'s ($1\leq \ell\leq p$)
are the elements of $\{k_1,\ldots,k_m\}$ for which $d\big(\mathbf{v};f_{0,\mathbf{w}}^{k_{j_\ell}}\big)+g_{j_\ell}=\delta$.
Since $f_{0,\mathbf{w},\mathbf{v}}^{k_{j_\ell}} = f_{0,\mathbf{v}+\nu\mathbf{w}}^{k_{j_\ell}}$ for any sufficiently large integer $\nu\in\mathbb{N}$ (so that $f_{0,\mathbf{w},\mathbf{v}}^{k_{j_\ell}}$ is the face function of $f_{0}^{k_{j_\ell}}$ with respect to the weight vector $\mathbf{v}+\nu\mathbf{w}$) and $f_{0,\mathbf{w},\mathbf{v}}^{k_{j_\ell}}(\mathbf{b})=0$ for $1\leq \ell \leq p$, and since $v_i+\nu w_i>0$ for all $1\leq i\leq n$, this contradicts the non-degeneracy of $V\big(f_0^{k_{j_1}},\ldots,f_0^{k_{j_p}}\big)$ (see Assumptions \ref{ass-sect-usr}).
\end{proof}

Combined with ($2'$) again, Claim \ref{claim2} implies that $\lambda(s)$ is not constantly zero.
Write it as a Laurent series $\lambda(s)=\lambda_0 s^c+\cdots$, where $\lambda_0\not=0$. Then, still from ($2'$), we deduce that for all $1\leq i\leq n$:
\begin{equation*}
\sum_{j=1}^m \tilde\mu_{k_j} \frac{\partial f_{0,\mathbf{w},\mathbf{v}}^{k_j}}{\partial z_i}(\mathbf{b})\, s^{\delta} + \cdots = 
\left\{
\begin{aligned}
& \lambda_0 \bar b_i\, s^{c+2v_i}+\cdots &&\mbox{if} &&i\in I(\mathbf{w}),\\
& 0 &&\mbox{if} &&i\notin I(\mathbf{w}).
\end{aligned}
\right.
\end{equation*}
Now, put $S_i:=\sum_{j=1}^m \tilde\mu_{k_j} \frac{\partial f_{0,\mathbf{w},\mathbf{v}}^{k_j}}{\partial z_i}(\mathbf{b})$ and define $v_0\in\mathbb{N}$ and $I_0\subseteq \{1,\ldots,n\}$ as in \eqref{fvm}, that is, $v_0:=\mbox{min}\{v_i\, ;\, i\in I(\mathbf{w})\}$ and $I_0:=\{i\in I(\mathbf{w})\, ;\, v_i=v_0\}$ (note that, in general, $v_0\geq v_{\mbox{\tiny min}}$).
Then, as in the proof of Lemma \ref{Lemma1}, since $\lambda_0 \bar b_i\not=0$ and the set $\{i\in I(\mathbf{w})\, ;\, S_i\not=0\}$ is not empty (see Claim \ref{claim2}), we have $\delta=c+2v_0$ and $S_i\not=0$ for any $i\in I_0$.
In fact, for any $1\leq i\leq n$, the following holds:
\begin{equation}\label{cIII-2}
S_i\equiv\sum_{j=1}^m \tilde\mu_{k_j} \frac{\partial f_{0,\mathbf{w},\mathbf{v}}^{k_j}}{\partial z_i}(\mathbf{b})=
\left\{
\begin{aligned}
& \lambda_0 \bar b_i && \mbox{if} && i\in I_0,\\
& 0 && \mbox{if} && i\notin I_0.
\end{aligned}
\right.
\end{equation}
Since $I_0\not=\emptyset$ and $f_{0,\mathbf{w},\mathbf{v}}^{k_j}(\mathbf{b})=0$ ($1\leq j\leq m$), the relation \eqref{cIII-2} together with the Euler identity imply
\begin{equation}\label{eq-euler0427}
\begin{aligned}
0 & = \sum_{j=1}^{m}\tilde\mu_{k_j}\cdot d\big(\mathbf{v};f_{0,\mathbf{w}}^{k_j}\big) \cdot f_{0,\mathbf{w},\mathbf{v}}^{k_j}(\mathbf{b}) = \sum_{j=1}^{m}\tilde\mu_{k_j}\bigg(\sum_{i=1}^n v_i b_i \frac{\partial f_{0,\mathbf{w},\mathbf{v}}^{k_j}}{\partial z_i}(\mathbf{b})\bigg) \\
& = \sum_{i\in I_0}v_i b_i\bigg( \sum_{j=1}^{m}\tilde\mu_{k_j} \frac{\partial f_{0,\mathbf{w},\mathbf{v}}^{k_j}}{\partial z_i}(\mathbf{b}) \bigg)
=\lambda_0\cdot\sum_{i\in I_0} v_i |b_i|^2\not=0,
\end{aligned}
\end{equation}
which is a contradiction. This completes the proof of Lemma \ref{Lemma2} in the case $v_{\mbox{\tiny min}}>0$. 

Let us now assume $v_{\mbox{\tiny min}}=0$. Clearly, we still have $f_{0,\mathbf{w},\mathbf{v}}^{k_j}(\mathbf{b})=0$ for $1\leq j\leq m$.

\begin{claim}\label{claim2-20210426}
Even when $v_{\mbox{\tiny \emph{min}}}=0$, there exists $i_0\in I(\mathbf{w})$ such that 
$
\sum_{j=1}^m \tilde\mu_{k_j} \frac{\partial f_{0,\mathbf{w},\mathbf{v}}^{k_j}}{\partial z_{i_0}}(\mathbf{b})\not=0.
$
\end{claim}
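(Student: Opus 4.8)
The plan is to argue by contradiction and to keep the scheme of the proof of Claim~\ref{claim2}, repairing the one step that used positivity of the relevant weight vector. So suppose Claim~\ref{claim2-20210426} fails, i.e. $\sum_{j=1}^m \tilde\mu_{k_j}\,\partial f_{0,\mathbf{w},\mathbf{v}}^{k_j}/\partial z_i(\mathbf{b})=0$ for every $i\in I(\mathbf{w})$; exactly as in the proof of Claim~\ref{claim2}, comparing leading terms in $s$ in ($2'$) shows that this sum then vanishes for every $1\le i\le n$. Recall that $f_{0,\mathbf{w},\mathbf{v}}^{k_j}=f_{0,\mathbf{v}+\nu\mathbf{w}}^{k_j}$ for all large $\nu\in\mathbb{N}$, that $f_{0,\mathbf{w},\mathbf{v}}^{k_j}(\mathbf{b})=0$ for $1\le j\le m$, and that the index set associated with the weight $\mathbf{v}+\nu\mathbf{w}$ is $I_0:=\{i\,;\,v_i=0\}$, which is nonempty precisely because $v_{\mbox{\tiny min}}=0$ and is contained in $I(\mathbf{w})$ because ($3'$) forces $v_i\ge 1$ whenever $i\notin I(\mathbf{w})$. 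Altogether these relations exhibit the point $\mathbf{b}\in\mathbb{C}^{*n}$ as a point satisfying conditions (1) and (2) of Lemma~\ref{Lemma1} for the functions $f_0^1,\dots,f_0^{k_0}$, the index set $I=\{1,\dots,n\}$, the weight vector $\mathbf{v}+\nu\mathbf{w}$, the $m$-tuple $(\tilde\mu_{k_1},\dots,\tilde\mu_{k_m})\in\mathbb{C}^m\setminus\{\mathbf{0}\}$, and $\lambda=0$ (Lemma~\ref{Lemma1} explicitly allows $\lambda=0$).

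Since $\varepsilon_0$ was fixed so that the conclusion of Lemma~\ref{Lemma1} holds, with that radius, for the functions $f_0^1,\dots,f_0^{k_0}$ (Assumptions~\ref{ass-sect-usr}(2) at $t=0$ being exactly Assumptions~\ref{ass-srfmf}), applying it to $\mathbf{b}$ gives $\mathbf{b}\notin\{\mathbf{z}\in\mathbb{C}^{*n}\,;\,\sum_{i\in I_0}|z_i|^2\le\varepsilon_0^2\}$, that is $\sum_{i\in I_0}|b_i|^2>\varepsilon_0^2$. On the other hand, $v_i=0$ forces $a_i(0)=b_i$ for $i\in I_0$, while $a_i(0)=0$ for $i\in I(\mathbf{w})\setminus I_0$; hence, using ($3'$), $\sum_{i\in I_0}|b_i|^2=\sum_{i\in I_0}|a_i(0)|^2=\lim_{s\to 0}\sum_{i\in I(\mathbf{w})}|a_i(s)|^2\le\varepsilon_0^2$, a contradiction. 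This closes the argument. (If one prefers to avoid the self-reference to Lemma~\ref{Lemma1}, one can instead note that, by ($1'$) and continuity, $\mathbf{a}(0)=\lim_{s\to 0}\mathbf{a}(s)$ lies in $\mathbb{C}^{*I_0}$ and is a common zero of the restricted functions $f_0^{k_j,I_0}$, so $I_0\in\mathcal{I}(f_0^{k_j})$ for the relevant $j$, and then the linear dependence obtained above, transported to $\mathbb{C}^{I_0}$, contradicts the non-degeneracy of $V^{I_0}(f_0^{k_{j_1}},\dots,f_0^{k_{j_p}})$ via Remark~\ref{rem-rI} and the Euler identity on $\mathbb{C}^{I_0}$.)

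The step needing the most care is the bookkeeping behind the phrase ``exhibits $\mathbf{b}$ as a point satisfying conditions (1)--(2) of Lemma~\ref{Lemma1}'': one has to check that $f_{0,\mathbf{w},\mathbf{v}}^{k_j}$ is genuinely the face function of $f_0^{k_j}$ with respect to a single weight vector $\mathbf{v}+\nu\mathbf{w}$, valid for all large $\nu$ simultaneously, whose zero set is exactly $I_0$ (this is where $I_0\subseteq I(\mathbf{w})$ is used), and that the asserted vanishing of $\sum_j\tilde\mu_{k_j}\partial f_{0,\mathbf{w},\mathbf{v}}^{k_j}/\partial z_i(\mathbf{b})$ for $i\notin I(\mathbf{w})$ does follow from ($2'$) by the same leading-term comparison as in Claim~\ref{claim2}. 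Once this is in place the argument is immediate; note in particular that, unlike the case $v_{\mbox{\tiny min}}>0$, the final contradiction comes not from the Euler identity \eqref{eq-euler0427} (which would be vacuous here, since $v_i=0$ on $I_0$) but from the size estimate ($3'$) on $\mathbf{a}(s)$.
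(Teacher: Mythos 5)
Your proof is correct and follows essentially the same route as the paper: argue by contradiction, observe that $f_{0,\mathbf{w},\mathbf{v}}^{k_j}=f_{0,\mathbf{v}+\nu\mathbf{w}}^{k_j}$ so that $\mathbf{b}$ satisfies conditions (1)--(2) of Lemma~\ref{Lemma1} with weight $\mathbf{v}+\nu\mathbf{w}$ and $\lambda=0$, and then contradict the resulting non-membership statement via the size bound in ($3'$). The only difference is cosmetic bookkeeping in the final inequality (you identify $I(\mathbf{v}+\nu\mathbf{w})=I(\mathbf{v})\subseteq I(\mathbf{w})$ outright, whereas the paper only uses $I(\mathbf{v}+\nu\mathbf{w})\subseteq I(\mathbf{v})$ and splits the sum over $I(\mathbf{w})$ and $I(\mathbf{w})^c$).
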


\begin{proof}
When $v_{\mbox{\tiny min}}=0$, the argument given in the proof of Claim \ref{claim2} does not apply. In fact, in this case, Claim \ref{claim2-20210426} directly follows from Lemma \ref{Lemma1} and our choice of $\varepsilon_0$.
More precisely, we know that $\mathbf{b}\in\mathbb{C}^{*n}$, $f_{0,\mathbf{w},\mathbf{v}}^{k_j}(\mathbf{b})=0$ ($1\leq j\leq m$) and $f_{0,\mathbf{w},\mathbf{v}}^{k_j}=f_{0,\mathbf{v}+\nu\mathbf{w}}^{k_j}$ for $\nu\in\mathbb{N}$ large enough. Therefore, arguing by contradiction, if 
\begin{align*}
\sum_{j=1}^m \tilde\mu_{k_j} \frac{\partial f_{0,\mathbf{w},\mathbf{v}}^{k_j}}{\partial z_{i}}(\mathbf{b})=0
\end{align*}
 for all $i\in I(\mathbf{w})$ (and hence, by ($2'$), for all $1\leq i\leq n$), then Lemma \ref{Lemma1} and our choice of $\varepsilon_0$ show that
\begin{align*}
\mathbf{b}\notin\bigg\{\mathbf{z}\in \mathbb{C}^{*n}\, ;\, \sum_{i\in I(\mathbf{v}+\nu\mathbf{w})}|z_i|^2\leq \varepsilon_0^2\bigg\}. 
\end{align*}
However, since $I(\mathbf{v+\nu\mathbf{w}})\subseteq I(\mathbf{v})$, we have
\begin{align*}
\sum_{i\in I(\mathbf{v}+\nu\mathbf{w})}|b_i|^2  \leq 
\sum_{i\in I(\mathbf{v})}|b_i|^2 = \sum_{i\in I(\mathbf{v})}|a_i(0)|^2 
 \leq
\underbrace{\sum_{i\in I(\mathbf{w})}|a_i(0)|^2}_{\leq \varepsilon_0^2} + 
\underbrace{\sum_{i\in I(\mathbf{w})^c}|a_i(0)|^2}_{=0} \leq \varepsilon_0^2,
\end{align*}
which is a contradiction. (Here, $I(\mathbf{w})^c:=\{1,\ldots,n\}\setminus I(\mathbf{w})$.)
\end{proof}

Combined with ($2'$), Claim \ref{claim2-20210426} shows that $\lambda(s)$ is not constantly zero, and exactly as above we deduce that the relation \eqref{cIII-2} holds true for $v_{\mbox{\tiny min}}=0$ too. 
(The subset $I_0$ and the number $v_0$ are defined as before; we also use the same Laurent expansion $\lambda(s)=\lambda_0 s^c+\cdots$.)
If $v_0=0$, then $I_0=I(\mathbf{w})\cap I(\mathbf{v})=I(\mathbf{v}+\nu\mathbf{w})$, and since $\sum_{i\in I(\mathbf{v}+\nu\mathbf{w})}|b_i|^2\leq \varepsilon_0^2$, then, once again, we get a contradiction with Lemma \ref{Lemma1} and our choice of $\varepsilon_0$. If $v_0\not=0$, then we get a contradiction exactly as in \eqref{eq-euler0427}.
This completes the proof of Lemma \ref{Lemma2} in the case $v_{\mbox{\tiny min}}=0$.

\section{The ``non-family'' case}

In the previous section, we have studied the case of families of functions. Hereafter, we investigate the ``non-family'' case. For that purpose, we consider $2k_0$ non-constant polynomial functions $f^1(\mathbf{z}),\ldots, f^{k_0}(\mathbf{z})$ and $g^1(\mathbf{z}),\ldots, g^{k_0}(\mathbf{z})$, each of them in $n$ complex variables $\mathbf{z}=(z_1,\ldots,z_n)$, and as usual we assume $f^k(\mathbf{0})=g^k(\mathbf{0})=0$ for all $1\leq k\leq k_0$. 

\begin{assumptions}\label{ass-sect-mt}
Throughout this section, we suppose that the following two conditions hold true:
\begin{enumerate}
\item
for any $1\leq k\leq k_0$, the Newton boundaries $\Gamma(f^k)$ and $\Gamma(g^k)$ coincide;
\item
for any $k_1,\ldots,k_m\in \{k_1,\ldots,k_0\}$, the germs at $\mathbf{0}$ of the varieties $V(f^{k_1},\ldots,f^{k_m})$ and $V(g^{k_1},\ldots,g^{k_m})$ are the germs of non-degenerate complete intersection varieties.
\end{enumerate}
\end{assumptions}

Put $f(\mathbf{z}):=f^1(\mathbf{z})\cdots f^{k_0}(\mathbf{z})$ and $g(\mathbf{z}):=g^1(\mathbf{z})\cdots g^{k_0}(\mathbf{z})$. The second main theorem of this paper is stated as follows. Once more, note that when $k_0=1$, the functions $f$ and $g$ are non-degenerate, and then we recover Theorem 3 of \cite{O1}.

\begin{theorem}\label{mtfg}
Under Assumptions \ref{ass-sect-mt}, the Milnor fibrations of $f$ and $g$ at $\mathbf{0}$ are isomorphic.
\end{theorem}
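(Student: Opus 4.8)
The plan is to join $f$ and $g$ by the linear homotopy of products and to propagate the conclusion of Theorem~\ref{fmtotp} along it. For $t\in\mathbb{C}$ and $1\le k\le k_0$ put
\begin{equation*}
f^k(t,\mathbf{z}):=(1-t)\,f^k(\mathbf{z})+t\,g^k(\mathbf{z}),\qquad f(t,\mathbf{z}):=f^1(t,\mathbf{z})\cdots f^{k_0}(t,\mathbf{z}),
\end{equation*}
and write $f^k_t(\mathbf{z}):=f^k(t,\mathbf{z})$, $f_t:=f^1_t\cdots f^{k_0}_t$, so that $f_0=f$, $f_1=g$, each $f^k(t,\mathbf{z})$ is a polynomial in $(t,\mathbf{z})$, and $f^k(t,\mathbf{0})=0$ for all $t$. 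Theorem~\ref{fmtotp} is stated for parameters near $0$, but after the affine change of parameter $t\mapsto t_0+t$ (which keeps $(1-t_0-t)f^k+(t_0+t)g^k$ polynomial in $(t,\mathbf{z})$) it applies over any disc $D_\rho(t_0):=\{t\in\mathbb{C}\,;\,|t-t_0|\le\rho\}$: if the family re-centred at $t_0$ satisfies Assumptions~\ref{ass-sect-usr} on $D_\rho(t_0)$, then the Milnor fibrations of $f_t$ and $f_{t_0}$ at $\mathbf{0}$ are isomorphic for every $t\in D_\rho(t_0)$. Since ``having isomorphic Milnor fibrations at $\mathbf{0}$'' is an equivalence relation, it suffices to find a path from $0$ to $1$ in the $t$-plane, each point of which has a neighbourhood on which the re-centred family satisfies Assumptions~\ref{ass-sect-usr}; a compactness argument will then chain finitely many such isomorphisms together to show that the Milnor fibrations of $f$ and $g$ at $\mathbf{0}$ are isomorphic.

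To carry this out, let $B\subseteq\mathbb{C}$ be the set of those $t$ for which \emph{either} $\Gamma(f^k_t)\ne\Gamma(f^k)$ for some $k$, \emph{or} $V(f^{k_1}_t,\dots,f^{k_m}_t)$ fails to be a non-degenerate complete intersection variety for some $k_1,\dots,k_m\in\{1,\dots,k_0\}$. The key claim is that $B$ is finite and that $0,1\notin B$. By Assumptions~\ref{ass-sect-mt}(1) we have $\Gamma(f^k)=\Gamma(g^k)$, so every monomial occurring in $f^k$ or in $g^k$ lies in $\Gamma_{\! +}(f^k)$; hence $\Gamma_{\! +}(f^k_t)\subseteq\Gamma_{\! +}(f^k)$ for all $t$, with equality precisely when none of the finitely many vertices $\alpha$ of $\Gamma(f^k)$ has vanishing coefficient $(1-t)\,c^{f^k}_\alpha+t\,c^{g^k}_\alpha$ in $f^k_t$. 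Each of these coefficients is a non-trivial affine function of $t$ (it equals $c^{f^k}_\alpha\ne0$ at $t=0$), so the first alternative defining $B$ occurs only for finitely many $t$, and for the remaining $t$ one has $\Gamma(f^k_t)=\Gamma(f^k)$ (in particular $f^k_t$ is non-constant), so that $\Delta(\mathbf{w};f^k_t)=\Delta(\mathbf{w};f^k)$ for every weight $\mathbf{w}$ and the face function $f^k_{t,\mathbf{w}}$ is supported on the $t$-independent face $\Delta(\mathbf{w};f^k)$ with coefficients depending affinely on $t$. Now the non-degeneracy requirement, although indexed by all positive weights $\mathbf{w}$, depends on $\mathbf{w}$ only through the tuple of faces $(\Delta(\mathbf{w};f^{k_1}),\dots,\Delta(\mathbf{w};f^{k_m}))$ it selects, and there are only finitely many such tuples; for each of them the property that the corresponding $V^*$ be a non-degenerate complete intersection in $\mathbb{C}^{*n}$ is a generic (Zariski-open) condition on the coefficients of the face functions, its failure being an algebraic condition (this uses the weighted-homogeneity of the face functions, Euler's identity and the discriminantal description of singular toric complete intersections; see \cite{O2}). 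Along our homotopy these coefficients are affine in $t$ and the condition is satisfied at $t=0$ and at $t=1$ by Assumptions~\ref{ass-sect-mt}(2), so it fails for at most finitely many $t$. Since there are only finitely many $k$'s, finitely many vertices, finitely many subsets $\{k_1,\dots,k_m\}$ of $\{1,\dots,k_0\}$ and finitely many relevant face tuples, it follows that $B$ is finite; and since $f^k_0=f^k$ and $f^k_1=g^k$, neither $0$ nor $1$ lies in $B$.

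Now choose a path $\gamma\colon[0,1]\to\mathbb{C}\setminus B$ with $\gamma(0)=0$ and $\gamma(1)=1$ (possible since $\mathbb{C}$ minus a finite set is path-connected). For each $s\in[0,1]$ pick $\rho(s)>0$ with $D_{\rho(s)}(\gamma(s))\cap B=\emptyset$; then the family re-centred at $\gamma(s)$ satisfies Assumptions~\ref{ass-sect-usr} on $D_{\rho(s)}(\gamma(s))$ — its Newton boundaries are $\Gamma(f^k)$ throughout, and all the varieties $V(f^{k_1}_t,\dots,f^{k_m}_t)$ are non-degenerate complete intersections there — so Theorem~\ref{fmtotp} shows that the Milnor fibrations of $f_t$ and $f_{\gamma(s)}$ at $\mathbf{0}$ are isomorphic for every $t\in D_{\rho(s)}(\gamma(s))$. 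By compactness of $[0,1]$ there exist $0=u_0<u_1<\dots<u_N=1$ and $s_1,\dots,s_N\in[0,1]$ with $\gamma([u_{j-1},u_j])\subseteq D_{\rho(s_j)}(\gamma(s_j))$ for every $j$; hence the Milnor fibration of $f_{\gamma(u_{j-1})}$ is isomorphic to that of $f_{\gamma(s_j)}$, which in turn is isomorphic to that of $f_{\gamma(u_j)}$. Composing these fibre-preserving diffeomorphisms for $j=1,\dots,N$ yields a fibre-preserving diffeomorphism between the Milnor fibrations of $f_{\gamma(0)}=f$ and $f_{\gamma(1)}=g$ at $\mathbf{0}$, which proves Theorem~\ref{mtfg}.

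The step I expect to be the main obstacle is the finiteness of $B$, i.e.\ the assertion that along the linear homotopy the non-degenerate-complete-intersection property fails only for finitely many $t$. Two ingredients make it work and should be written out carefully: first, the hypothesis $\Gamma(f^k)=\Gamma(g^k)$ keeps the Newton boundaries — and hence the combinatorics of the faces and the supports of all face functions — fixed along the homotopy, reducing the a priori infinite family of non-degeneracy conditions to finitely many; second, each of these conditions is genuinely algebraic (Zariski-closed) in the coefficients of the relevant face functions, so that, being satisfied at $t=0$, its restriction to the line traced by the homotopy excludes only finitely many values of $t$. Everything else is a routine assembly of Theorem~\ref{fmtotp} with a connectedness and compactness argument.
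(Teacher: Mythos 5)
Your proof is correct and follows the same basic strategy as the paper: join $f$ and $g$ by a one-parameter family that satisfies Assumptions~\ref{ass-sect-usr} locally in $t$, apply Theorem~\ref{fmtotp} on small parameter discs, and chain the resulting fibration isomorphisms by compactness. The difference is in the packaging. You homotope directly along $f^k_t=(1-t)f^k+tg^k$ and argue that the ``bad'' set $B\subseteq\mathbb{C}$ is finite. The paper instead first homotopes each $f^k$ to its Newton principal part $F^k$ along $(1-t)f^k+tF^k$ (the paper's Claim~\ref{c-imffpp}), a step that is essentially free because the face functions are literally constant in $t$ there and no genericity question arises; and only then homotopes the tuple $(F^1,\dots,F^{k_0})$ to $(G^1,\dots,G^{k_0})$ along a piecewise-linear path inside the set $U$ of good coefficient tuples, which the paper shows is Zariski open (Claim~\ref{c-zo}). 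Your shortcut works because the non-degeneracy conditions see only the face functions, which depend only on the Newton principal parts, and those vary affinely in $t$ along your line.

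The step you correctly flag as the main obstacle is exactly where the paper puts its effort. The assertion that ``the failure of non-degeneracy is an algebraic (Zariski-closed) condition on the coefficients of the face functions'' is Claim~\ref{c-zo}. It is \emph{not} simply citable from \cite{O2}: the special case $k_0=1$ is in the appendix of \cite{O3}, and the general case is proved in the present paper by a non-trivial argument. The subtlety is that the failure set $W^*_\mathbf{w}$ is a priori only the image of a constructible set (the fibre over $\mathbf{z}\in\mathbb{C}^{*n}$ is taken over an \emph{open} torus, so the proper mapping theorem does not apply directly); the paper shows $W^*_\mathbf{w}=\bar W_\mathbf{w}$ is closed by combining the proper mapping theorem for the closures $\bar V_\mathbf{w}(\mathbf{w}')$ with the Curve Selection Lemma and Whitney's lemma on tangents to analytic varieties. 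So if you want a complete argument you must either reproduce this, or, for your purposes, invoke Chevalley's theorem on constructibility of images: constructibility alone already gives that the intersection of the bad set with a line through the good point $t=0$ is finite, which is all your chaining argument actually needs. Either way, this is genuine mathematical content, not a citation to \cite{O2}.

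Aside from that, your compactness-and-chaining along a path in $\mathbb{C}\setminus B$ is a sound, slightly more hands-on rendering of what the paper gets for free from path-connectedness of the Zariski-open set $U$; the re-centring of Theorem~\ref{fmtotp} at an arbitrary $t_0$, which you invoke, is exactly what the paper has already established in the proof of Corollary~\ref{cor1}, so that move is legitimate.
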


\begin{proof}
For any $1\leq k\leq k_0$ and any $t\in D_1:=\{t\in\mathbb{C}\, ;\, \vert t\vert\leq 1\}$, we consider the polynomial functions 
\begin{equation*}
f_t^k(\mathbf{z}):=(1-t)f^k+t F^k
\quad\mbox{and}\quad
g_t^k(\mathbf{z}):=(1-t)g^k+t G^k,
\end{equation*} 
 where 
\begin{equation*}
F^k(\mathbf{z}):=\sum_{\alpha\in\Gamma(f^k)}c_\alpha\, \mathbf{z}^\alpha
\quad\mbox{and}\quad
G^k(\mathbf{z}):=\sum_{\alpha\in\Gamma(g^k)}c'_\alpha\, \mathbf{z}^\alpha
\end{equation*} 
are the Newton principal parts of 
\begin{equation*}
f^k(\mathbf{z}):=\sum_{\alpha\in\mathbb{N}^n}c_\alpha\, \mathbf{z}^\alpha
\quad\mbox{and}\quad
g^k(\mathbf{z}):=\sum_{\alpha\in\mathbb{N}^n}c'_\alpha\, \mathbf{z}^\alpha
\end{equation*} 
respectively.

\begin{claim}\label{c-imffpp}
The Milnor fibrations of $f^1\cdots f^{k_0}$ and  $F^1\cdots F^{k_0}$ (respectively, of $g^1\cdots g^{k_0}$ and  $G^1\cdots G^{k_0}$) at $\mathbf{0}$ are isomorphic.
\end{claim}

\begin{proof}
First, observe that for any $1\leq k\leq k_0$ and any positive weight vector $\mathbf{w}$, we have
\begin{equation*}
f_{t,\mathbf{w}}^k(\mathbf{z}):=((1-t)f^k+t F^k)_{\mathbf{w}}=(1-t)f^k_{\mathbf{w}}+t F^k_{\mathbf{w}}=f^k_{\mathbf{w}}.
\end{equation*} 
From this observation and Assumptions \ref{ass-sect-mt}, we deduce that $V(f_t^{k_1},\ldots,f_t^{k_m})$ is a non-degenera\-te complete intersection variety for any $t\in D_1$, and since $\Gamma(f^k_t)=\Gamma(f^k)$, it follows from Theorem 4.8 that the functions $f_t(\mathbf{z}):=f_t^1(\mathbf{z})\cdots f_t^{k_0}(\mathbf{z})$ and $f_0(\mathbf{z}):=f_0^1(\mathbf{z})\cdots f_0^{k_0}(\mathbf{z})=f^1(\mathbf{z})\cdots f^{k_0}(\mathbf{z})$ have isomorphic Milnor fibrations for any $t\in D_1$. In particular, taking $t=1$ gives that $F^1\cdots F^{k_0}$ and $f^1\cdots f^{k_0}$ have isomorphic Milnor fibrations as announced.
\end{proof}

\begin{claim}\label{c-mfppi}
The Milnor fibrations of $F^1\cdots F^{k_0}$ and $G^1\cdots G^{k_0}$ at $\mathbf{0}$ are isomorphic.
\end{claim}

The proof of this claim is given below. Of course, Theorem \ref{mtfg} follows from Claims \ref{c-imffpp} and \ref{c-mfppi}.
\end{proof}

Now, let us prove Claim \ref{c-mfppi}.

\begin{proof}[Proof of Claim \ref{c-mfppi}]
For each $1\leq k\leq k_0$, let $\nu_{k,1},\ldots,\nu_{k,n_k}$ be the integral points of $\Gamma(f^k)=\Gamma(F^k)$, and for any $\mathbf{c}_k=(c_{k,1},\ldots,c_{k,n_k})\in\mathbb{C}^{n_k}$ put
\begin{equation*}
h^k_{\mathbf{c}_k}(\mathbf{z}):=\sum_{j=1}^{n_k} c_{k,j}\, \mathbf{z}^{\nu_{k,j}}.
\end{equation*}
Now, consider the set $U$ of points $(\mathbf{c}_1,\ldots,\mathbf{c}_{k_0})$ in $\mathbb{C}^{n_1}\times \cdots\times \mathbb{C}^{n_{k_0}}$ such that:
\begin{enumerate}
\item
$\Gamma(h^k_{\mathbf{c}_k})=\Gamma(f^k)$ for any $1\leq k\leq k_0$;
\item
for any $1\leq k_1,\ldots,k_m\leq k_0$, the variety $V(h^{k_1}_{\mathbf{c}_{k_1}},\ldots,h^{k_m}_{\mathbf{c}_{k_m}})$ 
is a non-degenerate complete intersection variety.
\end{enumerate}

\begin{claim}\label{c-zo}
The set $U$ is a Zariski open subset of $\mathbb{C}^{n_1}\times \cdots\times \mathbb{C}^{n_{k_0}}$; in particular, it is path-connected.
\end{claim}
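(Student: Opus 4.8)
The plan is to write $U=U_1\cap U_2$, where $U_1$ is the set of tuples $(\mathbf{c}_1,\ldots,\mathbf{c}_{k_0})$ satisfying condition $(1)$ and $U_2$ the set satisfying condition $(2)$, and to show that each $U_j$ is Zariski open; a finite intersection of Zariski open sets is then Zariski open, so $U$ is Zariski open. It is moreover nonempty: if $\mathbf{c}_k^F$ denotes the coefficient vector for which $h^k_{\mathbf{c}_k^F}=F^k$, then $\Gamma(F^k)=\Gamma(f^k)$ (the vertices of $\Gamma_+(f^k)$ occur in $f^k$, hence in $F^k$), and for every positive weight $\mathbf{w}$ one has $(F^k)_{\mathbf{w}}=(f^k)_{\mathbf{w}}$, so $V(F^{k_1},\ldots,F^{k_m})$ inherits from $V(f^{k_1},\ldots,f^{k_m})$ the property of being a non-degenerate complete intersection variety (Assumptions \ref{ass-sect-mt}); thus $(\mathbf{c}_1^F,\ldots,\mathbf{c}_{k_0}^F)\in U$. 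Finally, a nonempty Zariski open subset of the irreducible smooth variety $\mathbb{C}^{n_1}\times\cdots\times\mathbb{C}^{n_{k_0}}$ has complement of real codimension $\geq 2$, hence is path-connected, which gives the last assertion.

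That $U_1$ is Zariski open is immediate: every exponent $\nu_{k,j}$ lies in $\Gamma(f^k)\subseteq\Gamma_+(f^k)$, so $\Gamma_+(h^k_{\mathbf{c}_k})\subseteq\Gamma_+(f^k)$ always, with equality precisely when $c_{k,j}\neq 0$ for every $\nu_{k,j}$ that is a vertex of $\Gamma_+(f^k)$ --- a vertex, being an extreme point, cannot be written as a convex combination of the remaining lattice points of $\Gamma_+(f^k)$ (each translated by $\mathbb{R}_+^n$). Hence $U_1$ is the complement of a finite union of coordinate hyperplanes.

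For $U_2$: on $U_1$ the polyhedra $\Gamma_+(h^k_{\mathbf{c}_k})=\Gamma_+(f^k)$ are fixed, so the finite family of compact faces, and the supports of all relevant face functions, are independent of $\mathbf{c}$; moreover, by Remark \ref{rem-rI}, condition $(2)$ is equivalent to the conjunction, over all $I\subseteq\{1,\ldots,n\}$ with $I\in\mathcal{I}(f^{k_1})\cap\cdots\cap\mathcal{I}(f^{k_m})$, of the statement that $V^I(h^{k_1}_{\mathbf{c}_{k_1}},\ldots,h^{k_m}_{\mathbf{c}_{k_m}})$ is a non-degenerate complete intersection variety in $\mathbb{C}^I$ (one direction being trivial, taking $I=\{1,\ldots,n\}$). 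Thus $U_2$ is cut out by finitely many conditions, each indexed by a subset $\{k_1,\ldots,k_m\}\subseteq\{1,\ldots,k_0\}$, a subset $I$ as above, and a tuple $(\Xi_1,\ldots,\Xi_m)$ of compact faces $\Xi_j$ of $\Gamma(f^{k_j,I})$ that is simultaneously cut out by some positive weight $\mathbf{w}\in\mathbb{N}^{*I}$, the condition being that the face functions $\big((h^{k_j}_{\mathbf{c}_{k_j}})^I\big)_{\mathbf{w}}$ ($1\leq j\leq m$) define a reduced non-singular complete intersection in the torus $\mathbb{C}^{*I}$. It remains to check each such condition is Zariski open in $(\mathbf{c}_1,\ldots,\mathbf{c}_{k_0})$. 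Fixing such data and taking, without loss of generality, $I=\{1,\ldots,n\}$, fix a positive $\mathbf{w}$ cutting out the $\Xi_j$. The face functions $(h^{k_j}_{\mathbf{c}_{k_j}})_{\mathbf{w}}$ are weighted homogeneous with respect to $\mathbf{w}$ and depend linearly on the $\mathbf{c}$'s, so the degenerate locus in $\mathbb{C}^{*n}$ --- where all $(h^{k_j}_{\mathbf{c}_{k_j}})_{\mathbf{w}}$ vanish and the Jacobian matrix $\big(\partial (h^{k_j}_{\mathbf{c}_{k_j}})_{\mathbf{w}}/\partial z_i\big)$ has rank $<m$ --- is invariant under the $\mathbb{C}^*$-action $\lambda*z=(\lambda^{w_1}z_1,\ldots,\lambda^{w_n}z_n)$; hence those defining equations (the face functions and the $m\times m$ minors) cut out a closed algebraic set $\widetilde{Z}$ in $\big(\mathbb{C}^{n_1}\times\cdots\times\mathbb{C}^{n_{k_0}}\big)\times\mathbb{P}(\mathbf{w})$, with $\mathbb{P}(\mathbf{w})$ the weighted projective space of $\mathbf{w}$. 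Since $\mathbb{P}(\mathbf{w})$ is proper over $\mathbb{C}$, the projection of $\widetilde{Z}$ to $\mathbb{C}^{n_1}\times\cdots\times\mathbb{C}^{n_{k_0}}$ is Zariski closed, and the ``bad'' coefficient locus attached to our data is contained in it. The step I expect to be the main obstacle is showing that this inclusion is an equality, i.e.\ that the part of $\widetilde{Z}$ lying over the boundary torus orbits of $\mathbb{P}(\mathbf{w})$ (points with some $z_i=0$, on which the face functions restrict to face functions of $f^{k_j,I'}$ with $I'\subsetneq\{1,\ldots,n\}$) contributes nothing new to the projection; this is exactly what the simultaneous imposition of the conditions attached to the smaller coordinate subspaces $\mathbb{C}^{I'}$ --- available via Remark \ref{rem-rI} --- takes care of. This bookkeeping is carried out, in the single-function case, in \cite[Chapter III]{O2}, and the complete intersection version is formally identical. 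Consequently each of the finitely many conditions defining $U_2$ is Zariski open, hence $U_2$, and therefore $U$, is Zariski open, which completes the proof.
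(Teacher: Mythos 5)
Your overall strategy is the right one and is close in spirit to the paper's: show the complement of $U$ is a finite union of images of closed algebraic sets under proper projections. Your treatment of $U_1$ (complement of coordinate hyperplanes given by vertices of $\Gamma_+(f^k)$) is clean and correct, as is the nonemptiness check. The structural difference is that you projectivize the $z$-space directly via the weighted projective space $\mathbb{P}(\mathbf{w})$ and stratify by $(\{k_1,\ldots,k_m\},I,\text{faces})$, whereas the paper first performs a monomial substitution $z_i\mapsto z_i^{\lambda_i}$ to make the face functions genuinely homogeneous and then works in $\mathbb{P}^{q_1-1}\times\cdots\times\mathbb{P}^{q_{k_0}-1}\times\mathbb{P}^{n-1}$, letting the sub-face $\mathbf{w}'$ of a fixed maximal face $\Delta(\mathbf{w};f^k)$ vary rather than stratifying by $I$.

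The genuine gap is precisely the step you flag as the main obstacle: you assert, but do not prove, that the boundary part of $\widetilde{Z}$ contributes nothing new to the projection once the conditions over all smaller $I'$ are imposed, and you defer to \cite[Chapter III]{O2} for the bookkeeping. This is where the actual content of the proof lies, and it is not merely bookkeeping. At a boundary point $\mathbf{b}$ with $I'=\{i\,;\,b_i\neq 0\}$, two nontrivial things must be checked: (i) some of the face functions $(h^{k_j}_{\mathbf{c}_{k_j}})_{\mathbf{w}}$ may restrict identically to zero on $\mathbb{C}^{I'}$ (when $\Delta(\mathbf{w};h^{k_j}_{\mathbf{c}_{k_j}})\cap\mathbb{R}^{I'}=\emptyset$), and one must pass to the subset $\{k_{j_1},\ldots,k_{j_p}\}$ of indices for which the restriction is a genuine face function; (ii) linear dependence of the full family of gradients does not immediately give linear dependence of the surviving subfamily, because the discarded gradients could account for the dependence. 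The paper handles exactly this by the Curve Selection Lemma: it produces a curve $\rho(s)$ in the open torus converging to the boundary point, expands the coefficients of the vanishing linear relation $\mu_{k_j}(s)$ in $s$, and extracts from the leading coefficients $\tilde\mu_{k_j}$ the correct surviving subset $K(0)$ and the correct face $\Delta$ (cut out by $\mathbf{w}'+\nu\mathbf{w}''$ for $\nu\gg 0$) at which non-degeneracy then fails \emph{in the torus}. That extraction is the heart of Claim \ref{c-zo} and must be carried out (or at least adapted from the $k_0=1$ case, which the paper locates in the appendix of \cite{O3}, not \cite{O2}); without it, your argument only shows the bad locus is \emph{contained in} a Zariski closed set, which is insufficient to conclude that $U$ is open.
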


The special case $k_0=1$ in Claim \ref{c-zo} is treated in the appendix of \cite{O3}.
Before proving this claim in the general case, we complete the proof of Claim \ref{c-mfppi}.

For each $1\leq k\leq k_0$, let 
\begin{equation*}
\mathbf{c}_{k}(F^k):=(c_{k,1}(F^k),\ldots,c_{k,n_k}(F^k))
\quad\mbox{and}\quad
\mathbf{c}_{k}(G^k):=(c_{k,1}(G^k),\ldots,c_{k,n_k}(G^k))
\end{equation*}
 be the points defined by
\begin{equation*}
F^k(\mathbf{z}):=h^k_{\mathbf{c}_{k}(F^k)}(\mathbf{z})
:=\sum_{j=1}^{n_k} c_{k,j}(F^k)\, \mathbf{z}^{\nu_{k,j}}
\quad\mbox{and}\quad
G^k(\mathbf{z}):=h^k_{\mathbf{c}_{k}(G^k)}(\mathbf{z})
:=\sum_{j=1}^{n_k} c_{k,j}(G^k)\, \mathbf{z}^{\nu_{k,j}}.
\end{equation*}
By Claim \ref{c-zo}, we can choose a finite sequence of (say, $p_0$) $k_0$-tuples 
\begin{equation*}
(\mathbf{c}_1(1),\ldots,\mathbf{c}_{k_0}(1)), \ldots, 
(\mathbf{c}_1(p_0),\ldots,\mathbf{c}_{k_0}(p_0))
\end{equation*}
 in $U$, starting at $(\mathbf{c}_1(F^1),\ldots,\mathbf{c}_{k_0}(F^{k_0}))$ and ending at $(\mathbf{c}_1(G^1),\ldots,\mathbf{c}_{k_0}(G^{k_0}))$,
such that for each $1\leq p\leq p_0-1$, the straight-line segment 
\begin{equation*}
\ell_{p}(t):=(1-t)\, (\mathbf{c}_1(p),\ldots,\mathbf{c}_{k_0}(p))+t\, (\mathbf{c}_1(p+1),\ldots,\mathbf{c}_{k_0}(p+1))
\end{equation*}
($1\leq t\leq 1$) is contained in $U$. For each $1\leq p\leq p_0-1$, we consider the family $\big\{h_{\ell_{p}(t)}\big\}_{0\leq t\leq 1}$ of polynomial functions defined by
\begin{equation*}
h_{\ell_{p}(t)}(\mathbf{z}):=h^1_{\ell^1_{p}(t)}(\mathbf{z})\cdots 
h^{k_0}_{\ell^{k_0}_{p}(t)}(\mathbf{z}),
\end{equation*}
where $\ell^{k}_{p}(t):=(1-t)\mathbf{c}_{k}(p)+t\mathbf{c}_{k}(p+1)$ is the $k$th coordinate of $\ell_{p}(t)$.
By Theorem 4.8, the Milnor fibrations of $h_{\ell_{p}(0)}$ and $h_{\ell_{p}(1)}$ at $\mathbf{0}$ are isomorphic. Claim \ref{c-mfppi} then follows from the equalities
\begin{equation*}
h_{\ell_{1}(0)}=F^1\cdots F^{k_0}
\quad\mbox{and}\quad
h_{\ell_{p_0-1}(1)}=G^1\cdots G^{k_0}.
\end{equation*}
This completes the proof of Claim \ref{c-mfppi} (up to Claim \ref{c-zo}).
\end{proof}

Now let us prove Claim \ref{c-zo}.

\begin{proof}[Proof of Claim \ref{c-zo}]
For any $1\leq k\leq k_0$ and any positive weight vector $\mathbf{w}$ defining a (compact) face $\Delta(\mathbf{w};f^k)$ of $\Gamma(f^k)$ with maximal dimension, let us denote by $\theta_{k,1},\ldots,\theta_{k,q_k}$ the integral points of $\Delta(\mathbf{w};f^k)$. Then, for any $\mathbf{a}_k=(a_{k,1},\ldots,a_{k,q_k})\in\mathbb{C}^{q_k}$, put
\begin{equation*}
\phi^k_{\mathbf{a}_k}(\mathbf{z}):=\sum_{j=1}^{q_k} a_{k,j}\, \mathbf{z}^{\theta_{k,j}}.
\end{equation*}
Note that $\phi^k_{\mathbf{a}_k}$ is weighted homogeneous with respect to $\mathbf{w}$.
Now, consider the set $U_\mathbf{w}$ consisting of the points $(\mathbf{a}_1,\ldots,\mathbf{a}_{k_0})$ in $\mathbb{C}^{q_1}\times \cdots\times \mathbb{C}^{q_{k_0}}$ satisfying the following two properties:
\begin{enumerate}
\item
$\Gamma(\phi^k_{\mathbf{a}_k})=\Delta(\mathbf{w};f^k)$ for any $1\leq k\leq k_0$;
\item
for any $1\leq k_1,\ldots,k_m\leq k_0$, the variety $V(\phi^{k_1}_{\mathbf{a}_{k_1}},\ldots,\phi^{k_m}_{\mathbf{a}_{k_m}})$ is a non-degenerate complete intersection variety.
\end{enumerate}
To prove Claim \ref{c-zo}, it suffices to show that $U_\mathbf{w}$ is a Zariski open set. To do that, we first observe that since $\phi^{k_j}_{\mathbf{a}_{k_j}}$ ($1\leq j\leq m$) is weighted homogeneous, there exist $\lambda_1,\ldots,\lambda_n\in\mathbb{N}^*$ such that the polynomial
\begin{equation*}
\Phi^{k_j}_{\mathbf{a}_{k_j}}(z_1,\ldots,z_n):=\phi^{k_j}_{\mathbf{a}_{k_j}}(z_1^{\lambda_1},\ldots,z_n^{\lambda_n})
\end{equation*}
is homogeneous. Then, since $V(\phi^{k_1}_{\mathbf{a}_{k_1}},\ldots,\phi^{k_m}_{\mathbf{a}_{k_m}})$ is non-degenerate if and only if $V(\Phi^{k_1}_{\mathbf{a}_{k_1}},\ldots,\Phi^{k_m}_{\mathbf{a}_{k_m}})$ is non-degenerate, we may assume that $\phi^{k_j}_{\mathbf{a}_{k_j}}$ is homogeneous for any $1\leq j\leq m$.
Now, observe that for any positive weight vector $\mathbf{w}'$, the set $\Delta(\mathbf{w}';f^k_{\mathbf{w}})$ is a (compact) face of $\Delta(\mathbf{w};f^k)$, and then consider the set $V_{\mathbf{w}}(\mathbf{w}')$ made up of all the points $(\mathbf{a}_1,\ldots,\mathbf{a}_{k_0},\mathbf{z})$ in $\mathbb{P}^{q_1-1}\times \cdots\times \mathbb{P}^{q_{k_0}-1}\times \mathbb{P}^{n-1}$ for which there exists a subset $K\subseteq \{1,\ldots, k_0\}$ such that: 
\begin{equation*}
\forall k\in K,\ \phi^{k}_{\mathbf{a}_{k},\mathbf{w}'}(\mathbf{z})=0
\quad\mbox{and}\quad
\bigwedge_{k\in K} d\phi^{k}_{\mathbf{a}_{k},\mathbf{w}'}(\mathbf{z})=0,
\end{equation*}
where we still denote by $\mathbf{a}_1,\ldots,\mathbf{a}_{k_0}$ and $\mathbf{z}$ the classes of $\mathbf{a}_1,\ldots,\mathbf{a}_{k_0}$ and $\mathbf{z}$ in the projective spaces $\mathbb{P}^{q_1-1},\ldots,\mathbb{P}^{q_{k_0}-1}$ and $\mathbb{P}^{n-1}$ respectively.
(Once more, let us recall that $\phi^{k}_{\mathbf{a}_{k},\mathbf{w}'}\equiv (\phi^{k}_{\mathbf{a}_{k}})_{\mathbf{w}'}$ denotes the face function of $\phi^{k}_{\mathbf{a}_{k}}$ with respect to the weight vector $\mathbf{w}'$.) Let $\bar V_{\mathbf{w}}(\mathbf{w}')$ be the closure of $V_{\mathbf{w}}^*(\mathbf{w}'):=V_{\mathbf{w}}(\mathbf{w}')\cap \{z_1\cdots z_n\not=0\}$ in $\mathbb{P}^{q_1-1}\times \cdots\times \mathbb{P}^{q_{k_0}-1}\times \mathbb{P}^{n-1}$. Then $\bar V_{\mathbf{w}}(\mathbf{w}')$ is an algebraic set of dimension $\dim V^*(\mathbf{w}')$ (see \cite[Lemma 3.9]{W}). Let
\begin{equation*}
\pi\colon (\mathbb{P}^{q_1-1}\times \cdots\times \mathbb{P}^{q_{k_0}-1})\times \mathbb{P}^{n-1}\to \mathbb{P}^{q_1-1}\times \cdots\times \mathbb{P}^{q_{k_0}-1}
\end{equation*}
be the standard projection, and let
\begin{equation*}
W_{\mathbf{w}}^*:=\pi(V_{\mathbf{w}}^*)
\quad\mbox{and}\quad
\bar W_{\mathbf{w}}:=\pi(\bar V_{\mathbf{w}}),
\end{equation*}
where
\begin{equation*}
V_{\mathbf{w}}^*:=\bigcup_{\mathbf{w}'\in\mathbb{N}^{*n}} V_{\mathbf{w}}^*(\mathbf{w}')
\quad\mbox{and}\quad
\bar V_{\mathbf{w}}:=\bigcup_{\mathbf{w}'\in\mathbb{N}^{*n}} \bar V_{\mathbf{w}}(\mathbf{w}').
\end{equation*}
Clearly, $U_{\mathbf{w}}$ is the complement of $(p_1\times\cdots\times p_{k_0})^{-1}(W_{\mathbf{w}}^*)\cup \{\mathbf{0}\}$, where $p_k\colon \mathbb{C}^{q_k}\setminus \{\mathbf{0}\}\to \mathbb{P}^{q_k-1}$ is the standard canonical map. By the proper mapping theorem (see \cite[Satz 23]{Remmert}), $\bar W_{\mathbf{w}}$ is an algebraic set containing $W_{\mathbf{w}}^*$. In fact, we are going to prove that $W_{\mathbf{w}}^*=\bar W_{\mathbf{w}}$, which implies that $U_{\mathbf{w}}$ is a Zariski open set. To show the equality $W_{\mathbf{w}}^*=\bar W_{\mathbf{w}}$, we argue by contradiction. Suppose that $W_{\mathbf{w}}^*\subsetneq\bar W_{\mathbf{w}}$. Then there exists $(\mathbf{a}_1,\ldots,\mathbf{a}_{k_0},\mathbf{z})\in \bar V_{\mathbf{w}}$ such that $(\mathbf{a}_1,\ldots,\mathbf{a}_{k_0})\in \bar W_{\mathbf{w}}\setminus W_{\mathbf{w}}^*$. By the Curve Selection Lemma, there exists a real analytic curve
\begin{equation*}
\rho(s)=(\mathbf{a}_1(s),\ldots,\mathbf{a}_{k_0}(s),\mathbf{z}(s)),
\end{equation*}
$0\leq s\leq 1$, and a positive weight vector $\mathbf{w}'\in\mathbb{N}^{*n}$ such that $\rho(s)\in V^*_{\mathbf{w}}(\mathbf{w}')$ for $s>0$ and $\rho(0)=(\mathbf{a}_1,\ldots,\mathbf{a}_{k_0},\mathbf{z})$. For each $1\leq k\leq k_0$, write
\begin{equation*}
\mathbf{a}_k(s)=\mathbf{a}_k+\mathbf{a}_{k,1}s+\cdots
\quad\mbox{and}\quad
\mathbf{z}(s)=(b_1s^{w''_1}+\cdots,\ldots,b_ns^{w''_n}+\cdots).
\end{equation*}
By the assumption, $b_i\in\mathbb{C}^*$, $w''_i\in\mathbb{N}$ ($1\leq i\leq n$) and $\mbox{max}\{w''_i\, ;\, 1\leq i\leq n\}>0$. Moreover, for any $s\not=0$, there exists $K(s)\subseteq\{1,\ldots,k_0\}$ such that:
\begin{equation*}
\forall k\in K(s),\ \phi^{k}_{\mathbf{a}_{k}(s),\mathbf{w}'}(\mathbf{z}(s))=0
\quad\mbox{and}\quad
\bigwedge_{k\in K(s)} d\phi^{k}_{\mathbf{a}_{k}(s),\mathbf{w}'}(\mathbf{z}(s))=0.
\end{equation*}
By looking at the leading terms (with respect to $s$) in the above expressions, it follows that there exists a subset $K(0)\subseteq\{1,\ldots,k_0\}$ such that:
\begin{equation}\label{mt2f-tleq}
\forall k\in K(0),\ \big(\phi^{k}_{\mathbf{a}_{k},\mathbf{w}'}\big)_{\Delta}(\mathbf{b})=0
\quad\mbox{and}\quad
\bigwedge_{k\in K(0)} d\big(\phi^{k}_{\mathbf{a}_{k},\mathbf{w}'}\big)_{\Delta}(\mathbf{b})=0,
\end{equation}
where $\mathbf{b}:=(b_1,\ldots,b_n)$, $\Delta$ is the (compact) face of $\Delta(\mathbf{w}';f^k_{\mathbf{w}})$ on which the linear form \begin{equation*}
\alpha\in\Delta(\mathbf{w}';f^k_{\mathbf{w}})\mapsto \sum_{i=1}^n \alpha_i w_i''\in\mathbb{R}
\end{equation*}
takes its minimal value, and $\big(\phi^{k}_{\mathbf{a}_{k},\mathbf{w}'}\big)_{\Delta}$ is the corresponding face function. However, since $b_i\in\mathbb{C}^*$ for all $1\leq i\leq n$, the relations \eqref{mt2f-tleq} imply $(\mathbf{a}_1,\ldots,\mathbf{a}_{k_0})\in W^*_{\mathbf{w}}$, which is a contradiction.
\end{proof}

\bibliographystyle{amsplain}

\end{document}